\documentclass[pdflatex,sn-mathphys-num]{sn-jnl}

\usepackage{graphicx}%
\usepackage{multirow}%
\usepackage{amssymb,amsfonts}%
\usepackage{amsthm}%
\usepackage{mathrsfs}%
\usepackage[title]{appendix}%
\usepackage{xcolor}%
\usepackage{textcomp}%
\usepackage{manyfoot}%
\usepackage{booktabs}%
\usepackage{tabularx}%
\usepackage{algorithm}%
\usepackage{algorithmicx}%
\usepackage{algpseudocode}%
\usepackage{tabularx}%
\usepackage{listings}%

\usepackage{amsmath}
\DeclareMathOperator*{\argmin}{arg\,min}

\theoremstyle{thmstyleone}%
\newtheorem{theorem}{Theorem}
\newtheorem{proposition}[theorem]{Proposition}%
\newtheorem{lemma}[theorem]{Lemma}
\newtheorem{corollary}[theorem]{Corollary}%

\theoremstyle{thmstyletwo}%
\newtheorem{remark}{Remark}%
\newtheorem{assumption}{Assumption}
\theoremstyle{thmstylethree}%
\newtheorem{definition}{Definition}%

\newcommand{\Lagrange}{\mathfrak{L}}

\begin{document}

\title[Article Title]{
Operational Regimes in Non-Convex Optimization: A Multiplier-Based Taxonomy
}


\author*[1]{\fnm{Seyed Mohsen} \sur{Kazemi}}\email{smohsen.kazemi84@sharif.edu}

\author[1]{\fnm{Ali} \sur{Movaghar}}\email{movaghar@sharif.edu}

\author[1]{\fnm{Shaahin} \sur{Hessabi}}\email{hessabi@sharif.edu}

\affil[1]{\orgdiv{Department of Computer Engineering}, \orgname{Sharif University of Technology}, \orgaddress{\street{Azadi}, \city{Tehran}, \postcode{11365-9517}, \state{Tehran}, \country{Iran}}}




\abstract{
    This paper introduces a structural taxonomy for constrained non-convex optimization based on the signature of Lagrange multipliers at KKT stationary points. Leveraging a unified game-theoretic interpretation of eight classical algorithm families—including block coordinate descent, ADMM, generalized Benders decomposition, successive convex approximation, interior-point methods, mirror descent, Frank–Wolfe, and Riemannian gradient descent—we show that the normalized multiplier vector carries an algorithm-independent structural fingerprint. Four scale-free shape features of this vector partition the dual space into five operational regimes: Unconstrained, Resource-Limited, Saturation, Strongly-Coupled, and Hybrid.
    We establish four structural theorems characterizing the partition: invariance under natural KKT symmetries, local stability under data perturbation with explicit Lipschitz margins from Robinson’s strong regularity, codimension-one regime transitions, and the topological identification of the Hybrid regime as the Lebesgue-null boundary of the core regimes. A linear-time classifier is proposed with provable guarantees on correctness, iteration stabilization, sample complexity, and online tracking under data drift. Numerical experiments on 104 mixed-integer nonlinear programs and a downlink beamforming instance validate the theoretical predictions.
    The framework provides a foundational tool for regime-aware algorithm design and robustness analysis in non-convex optimization.

}

\keywords{constrained nonlinear optimization, KKT stationarity, Lagrange multipliers, operational regimes, sensitivity analysis, game-theoretic optimization, decomposition methods, Stackelberg games, potential games, sample complexity, online tracking, mixed-integer nonlinear programming, wireless resource allocation.}



\maketitle


\section{Introduction}
\label{sec:intro}


The behavior of an optimization algorithm at convergence is conventionally characterized by its rate of convergence and the residual Karush--Kuhn--Tucker (KKT) optimality conditions. 
This view yields clean theoretical statements but obscures a complementary structural question: what does the converged solution tell us about the geometry of the underlying problem? The KKT conditions deliver, alongside a primal solution $x^\star$, a vector of Lagrange multipliers $\lambda^\star$ whose entries quantify the sensitivity of the optimal value to perturbations of each constraint. 
Their support, magnitudes, and concentration encode which constraints are binding, how tightly, and how the active geometry is organized. 
This dual signature is more than an algorithmic byproduct: it is a structural fingerprint of the solution that ought to inform algorithm selection, robustness analysis, and adaptive deployment.
Yet the standard analytic toolkit---convergence rates, regret bounds, iteration complexity---rarely engages with this structure directly. 
Algorithm selection in practice remains largely empirical, robustness under data uncertainty is typically established case by case, and the qualitative "phase-like" convergence patterns reported across many algorithm families lack a method-agnostic explanation.

A second observation suggests how to make this structure operational. 
Despite their surface differences, the eight algorithm families we consider in this paper all produce, at convergence, a KKT triple of some (sub)problem of an underlying parametric nonlinear program.
Block coordinate descent (BCD) performs best-response sweeps in an exact potential game; generalized Benders decomposition (GBD) solves a Stackelberg game between discrete and continuous variables; the alternating direction method of multipliers (ADMM) implements consensus-seeking saddle-point dynamics; successive convex approximation (SCA) realizes hierarchical approximation games; interior-point methods (IPM) track a parametric central path of perturbed equilibria; mirror descent executes regularized no-regret learning in repeated online games; Frank--Wolfe plays linear-oracle no-regret dynamics; and Riemannian gradient descent discretizes equilibrium trajectories of differential games on smooth manifolds. 
Each correspondence is classical on its own; what we extract from their juxtaposition is that the KKT triple at any fixed point is, under standard regularity, \emph{unique}, and therefore independent of which method produced it.

This independence reframes the eight equilibrium correspondences as eight routes to a single structural object: the multiplier vector $\lambda^\star$. 
Once that object is identified, the structural analysis that the rate-centric view cannot deliver becomes available---provided we extract from $\lambda^\star$ a classification that is both intrinsic to the solution and stable under the noise that arises in practice.

\subsection{Contributions}
\label{sec:intro:contributions}
We construct such a classification algorithm from four scale-free statistics of $\lambda^\star$: the total dual mass $L$, the top-$k$ concentration $\Sigma_k$, the $\delta$-support fraction $\sigma_\delta$, and the second-largest normalized entry $\tau$. 
These features induce a partition of the dual space $\mathbb{R}^m_+$ into five operational regimes---Unconstrained ($\mathcal{R}_{\mathrm{unc}}$), Resource-Limited ($\mathcal{R}_{\mathrm{res}}$), Saturation ($\mathcal{R}_{\mathrm{sat}}$), Strongly-Coupled ($\mathcal{R}_{\mathrm{coup}}$), and Hybrid ($\mathcal{R}_{\mathrm{hyb}}$). 
Each core regime encodes a qualitatively distinct active-constraint pattern, and the Hybrid regime is shown to arise as the topological boundary between the other four.

The contributions of this paper are the following.
\begin{enumerate}
\item \emph{A unified game-theoretic template} (Section~\ref{sec:games}).
We catalog eight classical algorithm families under a single fixed-point template (Table~\ref{tab:method-equilibria}) and show that each produces KKT triples of an associated subproblem, with the multiplier vector serving as a method-independent structural primitive.
The individual equilibrium correspondences are classical; their juxtaposition under one template, with the multiplier vector singled out as the common structural output, is the contribution.

\item \emph{A scale-free regime taxonomy} (Section~\ref{sec:taxonomy}).
A partition of $\mathbb{R}^m_+$ into five regimes is defined via four shape features of the Lagrange multipliers, with the Hybrid regime characterized as a boundary set rather than a residual class.

\item \emph{Four structural theorems} (Section~\ref{sec:structural}).
Theorem~\ref{thm:invariance} establishes invariance of the classification under permutation, uniform rescaling, and $C^2$-diffeomorphism symmetries of the KKT system, yielding method-independence as a direct corollary.
Theorem~\ref{thm:stability} establishes local constancy of the classification under perturbation of problem data, with explicit Lipschitz-style margins derived from Robinson's strong regularity. 
Theorem~\ref{thm:transition} characterizes regime transitions as codimension-one events along data trajectories.
Theorem~\ref{thm:hybrid} identifies the Hybrid regime as the Lebesgue-null topological boundary of the union of core regimes.

\item \emph{A linear-time regime classifier with operational guarantees} (Section~\ref{sec:classifier}). 
The classifier (Algorithm~\ref{alg:classify}) computes the regime label in $\mathcal{O}(m)$ time and admits four guarantees: Theorem~\ref{thm:classifier-correctness} (deterministic correctness under multiplier perturbation); Theorem~\ref{thm:classifier-stabilization} and Corollary~\ref{cor:classifier-rates} (method-specific iteration stabilization at rates logarithmic, linear, or quadratic in the inverse feature margin); Theorem~\ref{thm:sample-complexity} (sample complexity $\mathcal{O}(\rho_r^{-2}\log(1/\delta))$ for finite-data deployments); Theorem~\ref{thm:online} (online tracking under bounded drift up to a cubic-in-margin threshold).

\item \emph{Empirical validation} (Section~\ref{sec:numerical}). 
Numerical experiments on five families of mixed-integer nonlinear programs, totalling $10^4$ instances, confirm each theoretical prediction in turn. A downlink beamforming instance illustrates the practical application of the taxonomy.
\end{enumerate}

The framework is designed to serve as a foundation for follow-up work, in particular for wireless resource-allocation problems whose constraint structure---power budgets, quality-of-service thresholds, inter-cell coupling---matches the regime taxonomy by construction. The
sample-complexity and tracking guarantees of Section~\ref{sec:classifier} provide the high-probability bounds that such follow-up papers require under channel-estimation noise and time-varying deployments, without re-derivation. Beyond wireless, the framework applies to power-systems optimization, optimal control under constraints, structured machine learning, and operations research more broadly.

\subsection{Related Work}
\label{sec:intro:related}

The work presented here sits at the intersection of three lines of research: game-theoretic interpretations of optimization algorithms, structural analysis via Lagrange multipliers and active sets, and adaptive or hybrid optimization methods. We discuss each in turn and position our contributions against it.

\subsubsection*{Game-theoretic interpretations of optimization}

sThe view of optimization algorithms as equilibrium-seeking processes has a long tradition. Block coordinate descent is equivalent to best-response dynamics in exact potential games when the objective serves as a potential function up to sign~\cite{beck2013convergence,monderer1996potential,tseng2001convergence}; convergence in non-convex settings under the Kurdyka--{\L}ojasiewicz property is developed in~\cite{attouch2013convergence,razaviyayn2013unified}. 
Generalized Benders decomposition admits a hierarchical (Stackelberg) interpretation in which the master problem plays the role of the leader~\cite{geoffrion1972generalized,vonstackelberg1934market,basar1999dynamic}. 
ADMM realizes consensus-seeking primal-dual dynamics interpretable as best-response iterations on the augmented Lagrangian, with the dual variable acting as a price coordinate~\cite{boyd2011distributed,he2012convergence,eckstein1992douglas}.
Successive convex approximation corresponds to hierarchical approximation games between a surrogate designer and an optimizer~\cite{razaviyayn2013unified,scutari2014decomposition,scutari2017parallel}, with majorization-minimization as a prominent special case~\cite{sun2016majorization}. 
Mirror descent and Frank--Wolfe are intimately connected to no-regret learning and online convex optimization: mirror descent implements regularized best responses with Bregman divergences~\cite{nemirovsky1983problem,beck2003mirror}, and Frank--Wolfe corresponds to linear minimization oracles in repeated zero-sum games~\cite{jaggi2013revisiting,lacostejulien2015global,hazan2008approximation}. 
Riemannian optimization discretizes gradient flows on manifolds and admits a continuous-time differential-game reading~\cite{absil2008optimization,boumal2023introduction,basar1999dynamic}. 
Interior-point methods track the central path, a parametric family of perturbed KKT equilibria as the barrier parameter tends to zero~\cite{nesterov1994interior,wright1997primal,forsgren2002interior}. 
While each correspondence above is established in prior work, the present paper is, to our knowledge, the first to assemble them under a single equilibrium-seeking template and, more importantly, to extract a common structural signature from the resulting multipliers.

\subsubsection*{Structural and multiplier-based analysis}

The importance of the active-set structure and the multiplier patterns is classical in sensitivity analysis~\cite{sun2016majorization,hazan2008approximation}, parametric optimization~\cite{sun2016majorization}, and active-set methods~\cite{hazan2008approximation}. 
Multiplier trajectories have been used to diagnose constraint tightness in constrained learning and robust optimization. Interior-point methods explicitly follow a path of perturbed complementarity conditions~\cite{monderer1996potential,lacostejulien2015global,boucheron2013concentration}.
The present paper builds on these ideas by elevating multiplier patterns from a diagnostic tool to a quantitative, method-independent taxonomy with formal invariance, stability, transition, and topological theorems, accompanied by a linear-time classifier with explicit correctness, iteration-complexity, sample-complexity, and drift-tracking guarantees.

\subsubsection*{Adaptive, hybrid, and regime-aware methods}

Adaptive strategies in optimization have been explored in several contexts, including dynamic penalty tuning in ADMM~\cite{bonnans2000perturbation}, adaptive surrogate selection in SCA, and switching between first- and second-order methods. 
Closest in spirit to the present work are structural-adaptation approaches in decomposition methods and hybrid solvers that switch based on constraint activity. 
The contribution we add is a unified cross-method taxonomy grounded in multiplier geometry, supported by formal structural theorems and explicit statistical guarantees (iteration complexity, sample complexity, drift tracking), which permits regime-aware design to be carried out with provable correctness rather than as a heuristic.

\subsection{Positioning}
\label{sec:intro:positioning}

The novelty of this paper lies not in any individual game-theoretic correspondence---each is classical---but in three things taken together. 
First, the identification of the multiplier vector $\lambda^\star$ as a method-independent structural object common to eight major algorithm families. 
Second, a precise taxonomy of $\lambda^\star$ supported by four structural theorems (invariance, stability, transition, boundary). 
Third, a linear-time classifier whose four guarantees (correctness, iteration, sample, drift) are quantitatively tight and designed to serve as cite-able primitives in downstream work. 
To our knowledge no prior work links the game-theoretic reading of optimization algorithms to a multiplier-driven structural taxonomy in this manner, and the resulting framework offers both new theoretical insights and practical tools for regime-aware, robust, and explainable optimization.

\subsection{Organization}
\label{sec:intro:organization}

Section~\ref{sec:prelim} fixes notation, states the parametric problem class, and recalls the equilibrium concepts used throughout. 
Section~\ref{sec:games} develops the unified game-theoretic template across the eight algorithm families. 
Section~\ref{sec:taxonomy} introduces the regime taxonomy and Section~\ref{sec:structural} establishes its four structural theorems. 
Section~\ref{sec:classifier} presents the regime classifier and its guarantees. Section~\ref{sec:numerical} reports the numerical validation. 
Section~\ref{sec:discussion} discusses implications, limitations, and directions for future work, and Section~\ref{sec:conclusion} concludes.


\section{Preliminaries}
\label{sec:prelim}

This section fixes the notation used throughout the paper, states the parametric problem class on which the structural theory of Sections~\ref{sec:taxonomy}--\ref{sec:classifier} is built, and recalls the equilibrium concepts needed for the game-theoretic reinterpretations of Section~\ref{sec:games}.

\subsection{Notation}
\label{sec:prelim:notation}

We write $\mathbb{R}^n$ for Euclidean space, $\mathbb{R}^n_+$ for its nonnegative orthant, and $\Delta^{m-1} := \{v \in \mathbb{R}^m_+ : \sum_i v_i = 1\}$ for the standard simplex. 
The Euclidean norm is $\|\cdot\|_2$ and the $\ell_1$ and $\ell_\infty$ norms are $\|\cdot\|_1$, $\|\cdot\|_\infty$. For $\lambda \in \mathbb{R}^m_+$ with $\|\lambda\|_1 > 0$, we write $\bar\lambda := \lambda / \|\lambda\|_1$ for the normalized multiplier vector, and $\bar\lambda_{(1)} \ge \bar\lambda_{(2)} \ge \cdots \ge \bar\lambda_{(m)}$ for its sorted entries. 
We write $C^k$ for the space of $k$-times continuously differentiable functions; $\nabla f$ and $\nabla^2 f$ for the gradient and Hessian of a scalar function $f$; and $D\phi$ for the Jacobian of a map $\phi$.
The support of a vector is $\operatorname{supp}(\lambda) := \{i : \lambda_i \ne 0\}$. We use $\mathbf{1}\{\cdot\}$ for the indicator function and $|\cdot|$ for the cardinality of a finite set.

\subsection{Problem Class and KKT Conditions}
\label{sec:prelim:opt}

We work with parametric mixed-integer nonlinear programs (MINLPs)
\begin{equation}
\label{eq:minlp}
\begin{aligned}
\min_{x,y}\quad & f(x,y;\omega) \\
\text{s.t.}\quad & g_i(x,y;\omega) \le 0,\quad i = 1,\ldots,m, \\
& h_j(x,y;\omega) = 0,\quad j = 1,\ldots,p, \\
& x \in \mathbb{R}^{n_c},\quad y \in \mathcal{Y} \subseteq \{0,1\}^{n_d},
\end{aligned}
\end{equation}
where $\omega \in \Omega \subset \mathbb{R}^d$ collects problem data (coefficients, budgets, channel realizations, and so on) and the functions $f, g_i, h_j$ are at least $C^2$ jointly in $(x,\omega)$ for each fixed $y$. 
We omit the dependence on $\omega$ when no confusion arises.
The mixed-integer form \eqref{eq:minlp} is needed for the discussion of generalized Benders decomposition (Section~\ref{sec:games:gbd}) and the experimental families of Section~\ref{sec:numerical}; the continuous specialization $n_d = 0$ is used throughout Sections~\ref{sec:taxonomy}--\ref{sec:classifier}, and we denote the continuous problem class
\begin{equation}
\label{eq:nlp-param}
\begin{aligned}
\min_{x \in \mathbb{R}^n}\quad & f(x;\omega) \\
\text{s.t.}\quad & g_i(x;\omega) \le 0,\quad i=1,\ldots,m, \\
& h_j(x;\omega) = 0,\quad j=1,\ldots,p.
\end{aligned}
\end{equation}

For a feasible $x^\star$, the Lagrangian of \eqref{eq:nlp-param} is
\begin{equation}
\label{eq:lagrangian}
\Lagrange(x,\lambda,\mu;\omega) = f(x;\omega) + \sum_{i=1}^m \lambda_i g_i(x;\omega) + \sum_{j=1}^p \mu_j h_j(x;\omega),
\end{equation}
and a KKT triple $(x^\star,\lambda^\star,\mu^\star) \in \mathbb{R}^n \times \mathbb{R}^m_+ \times \mathbb{R}^p$ satisfies
\begin{subequations}
\label{eq:kkt}
\begin{align}
\nabla_x \Lagrange(x^\star,\lambda^\star,\mu^\star) &= 0, \label{eq:kkt-stat}\\
g_i(x^\star) \le 0,\ \lambda^\star_i \ge 0,\ \lambda^\star_i g_i(x^\star) &= 0,\quad \forall i, \label{eq:kkt-comp}\\
h_j(x^\star) &= 0,\quad \forall j. \label{eq:kkt-feas}
\end{align}
\end{subequations}
The active set at $x^\star$ is $\mathcal{A}(x^\star) := \{i : g_i(x^\star) = 0\}$, and the complementarity condition \eqref{eq:kkt-comp} implies $\operatorname{supp}(\lambda^\star) \subseteq \mathcal{A}(x^\star)$.

Throughout the paper we work under the following regularity assumption, stated here once for use in Sections~\ref{sec:taxonomy}--\ref{sec:classifier}.

\begin{assumption}[LICQ + SOSC]
\label{ass:LICQSOSC}
At the KKT triple $(x^\star,\lambda^\star,\mu^\star)$ of \eqref{eq:nlp-param}:
\begin{enumerate}
\item[\emph{(i)}] (\emph{LICQ}) the gradients $\{\nabla g_i(x^\star)\}_{i \in \mathcal{A}(x^\star)} \cup \{\nabla h_j(x^\star)\}_{j=1}^p$ are linearly independent;
\item[\emph{(ii)}] (\emph{SOSC}) for every nonzero $d$ in the critical cone
\begin{equation}
\label{eq:critical-cone}
\begin{aligned}
\mathcal{C}(x^\star) := \bigl\{d :\ & \nabla g_i(x^\star)^\top d \le 0\ \forall i \in \mathcal{A}(x^\star), \\
& \nabla g_i(x^\star)^\top d = 0\ \forall i \in \operatorname{supp}(\lambda^\star), \\
& \nabla h_j(x^\star)^\top d = 0\ \forall j \bigr\},
\end{aligned}
\end{equation}
we have $d^\top \nabla_{xx}^2 \Lagrange(x^\star,\lambda^\star,\mu^\star) d > 0$.
\end{enumerate}
\end{assumption}

Under Assumption~\ref{ass:LICQSOSC} the multipliers $(\lambda^\star,\mu^\star)$ are unique and the KKT system is locally invertible~\cite{sun2016majorization,hazan2008approximation}. 
When invoked, \emph{strict complementarity} denotes the additional condition $\lambda^\star_i > 0$ for every $i \in \mathcal{A}(x^\star)$; this is required only where indicated (specifically, in Theorems~\ref{thm:stability} and \ref{thm:transition}, which rely on Robinson's strong regularity).

\subsection{Equilibrium Concepts}
\label{sec:prelim:games}

A game $\Gamma = (\mathcal{I}, \{\mathcal{S}_i\}_{i \in \mathcal{I}}, \{u_i\}_{i \in \mathcal{I}})$ consists of a set of players $\mathcal{I}$, strategy sets $\mathcal{S}_i$, and utility functions $u_i : \prod_{j \in \mathcal{I}} \mathcal{S}_j \to \mathbb{R}$. We recall the equilibrium concepts used in Section~\ref{sec:games}.

\begin{definition}[Nash equilibrium~\cite{nash1950,nash1951,basar1999dynamic}]
\label{def:nash}
A strategy profile $s^\star = (s^\star_i)_{i \in \mathcal{I}}$ is a \emph{Nash equilibrium} of $\Gamma$ if no player can improve its utility by unilateral deviation:
\begin{equation*}
u_i(s^\star_i, s^\star_{-i}) \ge u_i(s_i, s^\star_{-i}), \quad \forall s_i \in \mathcal{S}_i,\ \forall i \in \mathcal{I},
\end{equation*}
where $s^\star_{-i}$ denotes the strategies of all players other than $i$.
\end{definition}

\begin{definition}[Stackelberg equilibrium~\cite{,vershynin2018high}]
\label{def:stackelberg}
In a two-stage hierarchical game with leader strategy $s_L \in \mathcal{S}_L$ and follower best response $s_F^\star(s_L) := \arg\max_{s_F \in \mathcal{S}_F} u_F(s_L, s_F)$, the \emph{Stackelberg equilibrium} is the pair $(s_L^\star, s_F^\star(s_L^\star))$ where $s_L^\star \in \arg\max_{s_L \in \mathcal{S}_L} u_L\bigl(s_L,\, s_F^\star(s_L)\bigr)$.
\end{definition}

\begin{definition}[Exact potential game~\cite{monderer1996potential}]
\label{def:potential}
A game $\Gamma$ is an \emph{exact potential game} if there exists a function $\Phi : \prod_i \mathcal{S}_i \to \mathbb{R}$ such that for every player $i$ and every $s_i, s_i' \in \mathcal{S}_i$,
\begin{equation*}
u_i(s_i, s_{-i}) - u_i(s_i', s_{-i}) = \Phi(s_i, s_{-i}) - \Phi(s_i', s_{-i}).
\end{equation*}
Nash equilibria of an exact potential game coincide with critical points of $\Phi$.
\end{definition}

The additional equilibrium notions arising in Section~\ref{sec:games}---saddle points of augmented Lagrangians (ADMM), no-regret and coarse correlated equilibria in repeated games (mirror descent, Frank--Wolfe), gradient-flow equilibria on manifolds (Riemannian descent), and parametric central-path equilibria (IPM)---are recalled in context with citations to the standard texts


\section{Game-Theoretic Lens on Optimization Algorithms}
\label{sec:games}

This section sets up the unified equilibrium viewpoint that underpins the regime taxonomy of Sections~\ref{sec:taxonomy}--\ref{sec:structural}. 
We catalog six classical algorithmic families and recall, with attribution to the literature, the equilibrium concept each realizes.
Our purpose is neither to survey these methods in depth nor to claim novelty for the individual correspondences---each has been established in prior work---but to make precise the common structural feature they share: at any fixed point, the algorithm has computed a KKT triple of a (sub)problem, and the resulting Lagrange multiplier vector carries an algorithm-agnostic signature of the solution's structure.
That signature is the object of study from Section~\ref{sec:taxonomy} onward.


\begin{definition}[A Common Equilibrium-Seeking Template]
Every algorithm considered in this paper admits a discrete-time fixed-point representation
\begin{equation}
\label{eq:template}
z^{(k+1)} \in \mathcal{B}\bigl(z^{(k)};\omega\bigr),
\end{equation}
where $z = (x,\lambda,\mu)$ collects primal and (possibly auxiliary) dual variables, $\omega \in \Omega$ is the problem data, and $\mathcal{B}$ is a method-specific best-response operator implementing one round of agent updates in an associated game $\Gamma$. 
A point $z^\star$ is a fixed point of \eqref{eq:template} if and only if it is an equilibrium of $\Gamma$ and a KKT triple of an associated (sub)problem of \eqref{eq:nlp-param}. 
\end{definition}

The forward direction follows from the variational characterization of equilibria \cite{facchinei2003finite}; the reverse follows from the KKT conditions \eqref{eq:kkt} reinterpreted as the stationarity conditions of the relevant
agent's optimization. 
The mapping between algorithms and games is made concrete in Table~\ref{tab:method-equilibria}.


\begin{table*}[pt]
\centering
\small
\caption{Eight algorithm families viewed as equilibrium-seeking dynamics. Each row records the game type, equilibrium concept, fixed-point notion, and primary references for both the algorithm and its game-theoretic interpretation. All correspondences below are classical; see the cited sources for proofs.}
\label{tab:method-equilibria}
\renewcommand{\arraystretch}{1.3}
\setlength{\tabcolsep}{4pt}
\begin{tabularx}{\textwidth}{@{}XXXXc@{}}
\toprule
\textbf{Method} & \textbf{Game type} & \textbf{Equilibrium concept} & \textbf{Algorithmic fixed point} & \textbf{Key references} \\
\midrule
BCD & exact potential & Nash \ \ ~~~~~~~\ \ ~~equilibrium & block-coordinate stationary & \cite{tseng2001convergence,beck2013convergence,monderer1996potential,attouch2013convergence} \\
GBD & hierarchical (Stackelberg) & Stackelberg equilibrium & KKT of~sub-problem+master optimality & \cite{geoffrion1972generalized,vonstackelberg1934market,bonnans2000perturbation} \\
ADMM & two-player \ \ ~~consensus & saddle/Nashof augmented \ \ ~~Lagrangian & primal-dual \ \ ~~stationary & \cite{boyd2011distributed,eckstein1992douglas,hong2016convergence} \\
SCA & hierarchical \ \ ~~approximation & Stackelberg-like (leader = surrogate) & KKT of original problem & \cite{razaviyayn2013unified,scutari2014decomposition,scutari2017parallel} \\
IPM & parametric \ \ ~~(central path) & perturbed KKT / barrier equilibrium & $\mu \to 0^+$ limit on central path & \cite{nesterov1994interior,wright1997primal,forsgren2002interior} \\
Mirror Descent & repeated online & no-regret/coarse correlated & ergodic average optimal & \cite{nemirovsky1983problem,beck2003mirror,bubeck2015convex,cesabianchi2006prediction} \\
Frank--Wolfe & repeated zero-sum (LMO) & no-regret minimax over linear losses & stationary VI on convex hull & \cite{jaggi2013revisiting,lacostejulien2015global,hazan2008approximation} \\
Riemannian \ \ ~~Descent & continuous-time differential & gradient-flow equilibrium & Riemannian \ \ ~~stationary & \cite{absil2008optimization,boumal2023introduction,boumal2019global} \\
\bottomrule
\end{tabularx}
\end{table*}




\subsection{Block Coordinate Descent}
\label{sec:games:bcd}

For $f : X \to \mathbb{R}$ with $X = X_1 \times \cdots \times X_M$, the cyclic BCD iteration
\begin{equation}
\label{eq:bcd-update}
x_i^{(k+1)} \in \argmin_{x_i \in X_i} f\bigl(x_1^{(k+1)},\ldots,x_{i-1}^{(k+1)},x_i,x_{i+1}^{(k)},\ldots,x_M^{(k)}\bigr)
\end{equation}
is the canonical best-response dynamics in the game with players $i=1,\ldots,M$, strategy spaces $X_i$, and utilities $u_i(x) = -f(x)$. 
Because $\partial_i u_i - \partial_i u_i' = \partial_i(\Phi - \Phi')$ holds trivially with $\Phi = -f$, the game is an exact potential game in the sense of \cite{monderer1996potential}, and every BCD limit point is a Nash equilibrium.
Convergence to a first-order stationary point of $f$ under continuous differentiability and Kurdyka--{\L}ojasiewicz regularity is established in \cite{tseng2001convergence,beck2013convergence,attouch2013convergence}. 
For the purposes of this paper, the relevant fact is that BCD terminates at a KKT point of each block subproblem; the per-block multiplier vector enters our taxonomy in Section~\ref{sec:taxonomy}.


\subsection{Generalized Benders Decomposition}
\label{sec:games:gbd}

For the MINLP \eqref{eq:nlp-param} with mixed continuous--discrete variables $(x,y)$, GBD \cite{geoffrion1972generalized} alternates between
\begin{equation}
\label{eq:gbd}
Q(y) := \min_x \bigl\{ f(x,y) : g(x,y) \le 0,\, h(x,y) = 0 \bigr\}
\end{equation}
and a master problem $\min_{y \in Y} \widehat Q(y)$, where $\widehat Q$ is the piecewise-linear outer approximation of $Q$ assembled from optimality and feasibility cuts generated by dual solutions of \eqref{eq:gbd}. 
The hierarchical structure is exactly that of a Stackelberg game \cite{vonstackelberg1934market,basar1999dynamic}: the leader (discrete variables) commits to $y$ anticipating the follower's rational response $x^\star(y) = \argmin_x Q(y)$. 
Under Slater's condition or LICQ on the follower subproblem and convexity in $x$ for fixed $y$, GBD terminates finitely at a Stackelberg equilibrium that is simultaneously a KKT point of \eqref{eq:gbd}; see \cite{geoffrion1972generalized,bonnans2000perturbation} for the underlying value-function sensitivity. The follower multipliers serve as the dual signature for the regime taxonomy.


\subsection{Alternating Direction Method of Multipliers}
\label{sec:games:admm}

For the separable problem
\begin{equation}
\label{eq:admm-problem}
\min_{x,z}\, f(x) + g(z) \quad \text{s.t.} \quad Ax + Bz = c,
\end{equation}
the scaled-form ADMM \cite{boyd2011distributed} iterates
\begin{equation}
\label{eq:admm}
\begin{aligned}
x^{(k+1)} &= \argmin_x\, \bigl\{ f(x) + \tfrac{\rho}{2}\|Ax + Bz^{(k)} - c + u^{(k)}\|^2 \bigr\}, \\
z^{(k+1)} &= \argmin_z\, \bigl\{ g(z) + \tfrac{\rho}{2}\|Ax^{(k+1)} + Bz - c + u^{(k)}\|^2 \bigr\}, \\
u^{(k+1)} &= u^{(k)} + Ax^{(k+1)} + Bz^{(k+1)} - c.
\end{aligned}
\end{equation}
The $x$- and $z$-updates are best responses with respect to the augmented Lagrangian, and the $u$-update is a dual ascent step on the linearly constrained dual function. In the convex case, fixed points of \eqref{eq:admm} are saddle points of the augmented Lagrangian and hence Nash equilibria of the two-player consensus game with utilities $u_1(x,z;u) = -f(x) - \tfrac{\rho}{2}\|Ax+Bz-c+u\|^2$ and $u_2$ defined symmetrically
\cite{eckstein1992douglas,boyd2011distributed}. 
In the nonconvex case under boundedness and sufficient penalty growth, ADMM converges to block-stationary points \cite{hong2016convergence}. 
In either case the scaled dual $u^\star$ is a multiplier estimate for the consensus constraint and enters the classification map $\Psi$ of Section~\ref{sec:taxonomy}.


\subsection{Successive Convex Approximation}
\label{sec:games:sca}

SCA \cite{razaviyayn2013unified,scutari2014decomposition,scutari2017parallel}
replaces a nonconvex problem $\min_{x \in X} f(x)$ with a sequence of convex
surrogates $\tilde f(\,\cdot\,;x^{(k)})$ satisfying (i) convexity, (ii)
first-order consistency $\nabla \tilde f(x^{(k)};x^{(k)}) = \nabla f(x^{(k)})$,
and (iii) an upper-bounding condition $\tilde f(x;x^{(k)}) \ge f(x)$. The
iterate is
\begin{equation}
\label{eq:sca}
x^{(k+1)} \in \argmin_{x \in X}\, \tilde f(x;x^{(k)}).
\end{equation}
Viewed as a hierarchical game, the leader (Approximator) commits to the
surrogate $\tilde f(\,\cdot\,;x^{(k)})$ and the follower (Optimizer) responds
with the minimizer; the procedure is a Stackelberg-like dynamics whose fixed
points are KKT points of the original problem
\cite{scutari2014decomposition}. Under the surrogate conditions above, limit
points are first-order stationary
\cite{razaviyayn2013unified}; convergence rates under additional convexity or
KL assumptions are documented in
\cite{scutari2017parallel,attouch2013convergence}. The multipliers of the
original constraints---inherited at the fixed point via first-order
consistency---are the regime-defining objects.


\subsection{Interior-Point Methods}
\label{sec:games:ipm}
For an inequality-constrained problem $\min_{x}f(x)$ subject to $g_i(x) \le 0$, $i = 1,\ldots,m$, the logarithmic barrier formulation
\begin{equation}
\label{eq:ipm-barrier}
\min_x\, f(x) - \mu \sum_{i=1}^m \log\bigl(-g_i(x)\bigr), \qquad \mu > 0,
\end{equation}
defines a one-parameter family of unconstrained problems whose stationarity condition $\nabla f(x) + \sum_i \tfrac{\mu}{-g_i(x)}\nabla g_i(x) = 0$ is equivalent, on setting $\lambda_i := \mu / (-g_i(x))$, to the \emph{perturbed KKT system}
\begin{equation}
\label{eq:ipm-perturbed-kkt}
\nabla f(x) + \sum_{i=1}^m \lambda_i \nabla g_i(x) = 0,\quad \lambda_i g_i(x) = -\mu,\quad \lambda_i,\,-g_i(x) > 0,
\end{equation}
which collapses to the standard KKT conditions \eqref{eq:kkt} as $\mu \to 0^+$ \cite{nesterov1994interior,wright1997primal}. The solution set $\{(x^\star(\mu),\lambda^\star(\mu)) : \mu > 0\}$ is the \emph{central path}; under LICQ and strict complementarity it is a smooth curve converging to a KKT triple of the original problem \cite{forsgren2002interior,nocedal2006numerical}.
Primal--dual interior-point methods apply Newton-type updates to \eqref{eq:ipm-perturbed-kkt} along a decreasing sequence $\mu_k \to 0^+$.

The game-theoretic interpretation is \emph{parametric}: each $\mu$ defines a regularized game whose equilibrium $z^\star(\mu)$ is the perturbed KKT point, and the algorithm computes a sequence of such equilibria along a continuous deformation of the game. 
The barrier term acts as a regularizer initially smoothing the feasible region and gradually withdrawn as $\mu_k \downarrow 0$.
This parametric form of equilibrium tracking is the natural continuation analogue of the discrete fixed-point template \eqref{eq:template}: rather than iterating an operator $\mathcal{B}$ to a single equilibrium, IPM follows a continuous family of equilibria parameterized by $\mu$. The limit multiplier vector $\lambda^\star=\lim_{k\to\infty}\lambda^\star(\mu_k)$ is, by \eqref{eq:ipm-perturbed-kkt}, an inequality multiplier of \eqref{eq:nlp-param} and feeds into the regime taxonomy of Section~\ref{sec:taxonomy} on the same footing as the multipliers produced by the discrete methods above.


\subsection{Mirror Descent}
\label{sec:games:md}

For a convex $f$ on a convex domain $X$ and a strictly convex mirror map $\psi$, the mirror descent update with step size $\eta > 0$ is
\begin{equation}
\label{eq:md}
x^{(k+1)} = \argmin_{x \in X}\, \Bigl\{ \langle \nabla f(x^{(k)}), x\rangle + \tfrac{1}{\eta} D_\psi\bigl(x,x^{(k)}\bigr) \Bigr\},
\end{equation}
where $D_\psi(x,y) = \psi(x) - \psi(y) - \langle \nabla \psi(y), x - y\rangle$ is the Bregman divergence \cite{nemirovsky1983problem,beck2003mirror}. 
In the online convex optimization framework \cite{cesabianchi2006prediction,bubeck2015convex}, \eqref{eq:md} is a regularized best response to the linearized loss against an adaptive adversary. 
It achieves sublinear regret $\mathcal{O}(\sqrt{T})$ (or $\mathcal{O}(\log T)$ under strong convexity), and time-average iterates converge to coarse correlated equilibria of the associated repeated game \cite{cesabianchi2006prediction}.
At a stationary fixed point of the offline problem, the first-order optimality of \eqref{eq:md} reduces to a KKT condition on $X$, and the associated multipliers populate our taxonomy.


\subsection{Frank--Wolfe / Conditional Gradient}
\label{sec:games:fw}

For a smooth convex $f$ on a compact convex set $X$, the Frank--Wolfe iteration \cite{jaggi2013revisiting} uses a linear minimization oracle (LMO) over $X$:
\begin{equation}
\label{eq:fw}
\begin{aligned}
s^{(k)} &\in \argmin_{s \in X}\, \bigl\langle \nabla f(x^{(k)}),\, s \bigr\rangle, \\
x^{(k+1)} &= (1 - \gamma_k)\,x^{(k)} + \gamma_k\,s^{(k)},
\end{aligned}
\end{equation}
with step size $\gamma_k \in [0,1]$. 
Each step plays a one-shot zero-sum game against a linear-cost adversary that selects the current gradient $\nabla f(x^{(k)})$ as the loss vector; the cumulative dynamics realize a no-regret learning procedure over $X$ achieving $\mathcal{O}(1/k)$ convergence to the constrained minimum \cite{jaggi2013revisiting,lacostejulien2015global}. 
The link to online learning is made explicit in \cite{hazan2012projectionfreeonlinelearning,chen2018projectionfreeonlineoptimizationstochastic}, where Frank--Wolfe is identified with a follow-the-perturbed-leader procedure on the convex hull of $X$.

At a fixed point of \eqref{eq:fw}, $x^\star = s^\star \in \argmin_{s \in X}
\langle \nabla f(x^\star), s \rangle$, which is equivalent to the variational inequality $\langle \nabla f(x^\star), s - x^\star \rangle \ge 0$ for all $s \in X$. 
Under LICQ on the constraints describing $X$, this VI is in turn equivalent to the KKT conditions \eqref{eq:kkt} of the original problem \cite{facchinei2003finite}. 
The dual certificate of the LMO at the limit point---namely, the multipliers of the binding constraints of $X$ at $x^\star$---is the multiplier vector populating the regime taxonomy. 
Compared to mirror descent, Frank--Wolfe trades a Bregman-regularized best response for a linear-oracle best response, but both inherit the no-regret game structure.


\subsection{Riemannian Optimization}
\label{sec:games:riem}

Let $(\mathcal{M},g)$ be a smooth Riemannian manifold and $f:\mathcal{M}\to\mathbb{R}$ smooth. The Riemannian gradient descent iteration
\begin{equation}
\label{eq:riem}
x^{(k+1)} = \mathrm{Retr}_{x^{(k)}}\bigl(-\eta\,\mathrm{grad}\,f(x^{(k)})\bigr),
\end{equation}
with $\mathrm{Retr}$ a first-order retraction \cite{absil2008optimization,boumal2023introduction}, discretizes the continuous-time gradient flow $\dot x(t) = -\mathrm{grad}\,f(x(t))$, which we interpret as the equilibrium trajectory of a Stackelberg-type, continuous-time differential game between an Optimizer following the Riemannian gradient and an Environment that selects $f$ and the metric \cite{basar1999dynamic}. 
Convergence to a Riemannian first-order stationary point under standard smoothness and retraction assumptions is established in \cite{boumal2019global,absil2008optimization}. 
When $\mathcal{M}$ is described intrinsically through equality constraints, the Lagrange multipliers of those constraints---inherited from any embedding---populate the regime taxonomy on the same footing as the multipliers of the methods above.


\subsection{A Unified Equilibrium Statement}
\label{sec:games:unified}
We collect the eight correspondences above into a single statement. 
The proposition below summarizes individually classical results; the combined statement is given here only to fix notation for the remainder of the paper.

\begin{proposition}[Unified equilibrium correspondence]
\label{prop:unified}
Let \eqref{eq:nlp-param} satisfy Assumption~\ref{ass:LICQSOSC} at a point $x^\star$. For each algorithm
\[
\mathcal{A} \in \{\mathrm{BCD},\,\mathrm{GBD},\,\mathrm{ADMM},\,\mathrm{SCA},\,\mathrm{IPM},\,\mathrm{MD},\,\mathrm{FW},\,\mathrm{Riem}\}
\]
there exists an associated game $\Gamma_{\mathcal{A}}$ (of the type listed in the second column of Table~\ref{tab:method-equilibria}) such that any limit point $z^\star = (x^\star,\lambda^\star,\mu^\star)$ of the iteration \eqref{eq:template}---or, in the case of IPM, the limit $\mu \to 0^+$ of the central path \eqref{eq:ipm-perturbed-kkt}---satisfies, simultaneously,
\begin{enumerate}
\item[(i)] an equilibrium-like variational characterization associated with
$\Gamma_{\mathcal{A}}$ in the relevant sense (Nash, Stackelberg, saddle, no-regret, gradient-flow, or parametric);
\item[(ii)] the KKT conditions \eqref{eq:kkt} of the original problem or of the relevant subproblem in the decomposition.
\end{enumerate}
The proof of (i)$\Leftrightarrow$(ii) for each $\mathcal{A}$ is given, respectively, in
\cite{monderer1996potential,tseng2001convergence}~(BCD),
\cite{geoffrion1972generalized}~(GBD),
\cite{boyd2011distributed,eckstein1992douglas}~(ADMM),
\cite{scutari2014decomposition,razaviyayn2013unified}~(SCA),
\cite{wright1997primal,forsgren2002interior}~(IPM),
\cite{nemirovsky1983problem,bubeck2015convex}~(MD),
\cite{jaggi2013revisiting,lacostejulien2015global}~(FW), and
\cite{absil2008optimization,boumal2019global}~(Riemannian).
The cited works establish the underlying convergence or variational correspondence motivating the interpretation.
\end{proposition}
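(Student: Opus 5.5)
The proof goes method by method, with one common reduction. For each $\mathcal{A}$ we fix $\Gamma_{\mathcal{A}}$ as in the corresponding subsection of Section~\ref{sec:games}. We then show that the first-order optimality conditions of every player's best-response problem, evaluated at a limit point $z^\star$ and taken together, coincide with the stationarity, complementarity and feasibility conditions \eqref{eq:kkt} of the original problem or of the relevant subproblem. The reduction is always the same. A point is an equilibrium in the relevant sense exactly when each player's strategy solves its own (sub)problem given the others. Under LICQ each such solution is characterized by its KKT system, and stacking the per-player KKT systems gives \eqref{eq:kkt}. Assumption~\ref{ass:LICQSOSC} makes the resulting multipliers $(\lambda^\star,\mu^\star)$ unique, so the stacked multipliers are the multipliers of \eqref{eq:nlp-param}.

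The concrete cases then run as follows.

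\textbf{BCD.} With $\Phi=-f$, a Nash point means block-wise optimality. Block-wise stationarity for every block is full stationarity of the Lagrangian, because the constraints are block-separable or handled per block.

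\textbf{ADMM and GBD.} For ADMM we pass to the limit in \eqref{eq:admm}. The $u$-update forces $Ax^\star+Bz^\star=c$, and the $x$- and $z$-optimality conditions give $0\in\partial f(x^\star)+\rho A^\top u^\star$ and $0\in\partial g(z^\star)+\rho B^\top u^\star$, which are the KKT conditions with multiplier $\rho u^\star$. For GBD, the follower's optimality is literally the KKT system of \eqref{eq:gbd}, since LICQ and convexity in $x$ are assumed. The leader's master optimality then adds no further conditions on $(x,\lambda,\mu)$.

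\textbf{SCA.} The surrogate conditions (i)--(iii) and first-order consistency give $\nabla\tilde f(x^\star;x^\star)=\nabla f(x^\star)$. Hence the KKT system of \eqref{eq:sca} at a fixed point is that of the original problem.

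\textbf{MD and FW.} For MD, a fixed point of \eqref{eq:md} has $\nabla\psi(x^{\star})-\nabla\psi(x^\star)=0$, so the prox optimality condition reduces to $\langle\nabla f(x^\star),x-x^\star\rangle\ge0$ on $X$. FW reaches the same variational inequality directly from \eqref{eq:fw}. In both cases the VI is converted to KKT under LICQ on the constraints describing $X$, as in \cite{facchinei2003finite}.

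\textbf{IPM.} On the central path \eqref{eq:ipm-perturbed-kkt} we invoke strict complementarity, which the paper assumes only where indicated, together with the implicit function theorem (local invertibility under LICQ+SOSC) to get a continuous path. Letting $\mu\to0^+$ then yields \eqref{eq:kkt}.

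\textbf{Riemannian.} A zero Riemannian gradient is the projection of the Euclidean gradient onto the tangent space $\{d:\nabla h_j^\top d=0\}$. This equals $\nabla f=-\sum_j\mu_j\nabla h_j$, with $\mu$ unique by LICQ.

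The converse directions (ii)$\Rightarrow$(i) are local in each case. Given KKT, each player's problem has a stationary point at $z^\star$, and SOSC upgrades that to a strict local best response, giving an equilibrium in the local or variational sense stated in (i).

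The main obstacle is that "equilibrium-like" is defined differently for each method, and for nonconvex problems a global Nash or Stackelberg equilibrium need not follow from KKT. I would therefore fix (i) explicitly as the \emph{variational} (first-order) form of each equilibrium concept, and prove the equivalence in that form. A second delicate point is mirror descent and no-regret: convergence to coarse correlated equilibria concerns ergodic averages, not limit points. I would handle this by restricting to stationary fixed points of the offline iteration, as the statement's "limit point" permits. Everything beyond these choices is bookkeeping that the cited references supply.
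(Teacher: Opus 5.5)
Your proposal follows essentially the same route as the paper. The paper's proof consists of the citations in the statement plus the per-method fixed-point sketches of Section~\ref{sec:games}: block-separable stationarity for BCD, follower KKT for GBD, first-order consistency for SCA, the variational inequality for MD and FW, tangent-space stationarity for the Riemannian case, and the $\mu\to0^+$ limit for IPM. You write these out explicitly, including the more precise multiplier $\rho u^\star$ for ADMM.

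Two cautions:
\begin{enumerate}
\item Your blanket converse (ii)$\Rightarrow$(i) fails for GBD. KKT of the follower subproblem at $y^\star$ says nothing about whether $y^\star$ is optimal in the combinatorial master problem. So at the termination point, (i) has to come from Geoffrion's finite-convergence theorem, not from the converse.
\item For IPM you do not need strict complementarity, which is not among the proposition's hypotheses. Once the limit of the central path exists, $\lambda_i g_i=-\mu\to0$ together with continuity of $\nabla f,\nabla g_i,g_i$ already gives \eqref{eq:kkt}.
\end{enumerate}
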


\begin{remark}[Other methods covered by the framework]
\label{rem:other-methods}
The eight algorithm families in Table~\ref{tab:method-equilibria} are illustrative rather than exhaustive. Several further classes admit analogous equilibrium interpretations and inherit the multiplier-based regime analysis of the sequel. Augmented Lagrangian methods \cite{hestenes1969multiplier,powell1969method,rockafellar1976augmented} implement dual-ascent dynamics on a shadow-price game and are the parent class of ADMM. 
Proximal gradient methods \cite{combettes2005signal,beck2009fast,parikh2014proximal} factor as two-player best response between a gradient-step agent and a proximal-step agent, and can be viewed as a degenerate ADMM with $B = -I$, $c = 0$. 
The primal--dual hybrid gradient method of Chambolle and Pock \cite{chambolle2011first} realizes explicit saddle-point dynamics on composite problems. 
Sequential quadratic programming and trust-region methods \cite{nocedal2006numerical} correspond to hierarchical approximation games with second-order surrogates, structurally analogous to SCA.
In each case the limit point is a KKT triple of the original problem and the resulting multiplier vector populates the taxonomy of Section~\ref{sec:taxonomy} without modification.
\end{remark}


\subsection{The Common Multiplier Signature}
\label{sec:games:bridge}

Proposition~\ref{prop:unified} has a consequence that is, individually, modest but, taken across all six methods, structurally important: every algorithm considered in this paper produces---at convergence---a multiplier vector $\lambda^\star$ associated with the inequality constraints of \eqref{eq:nlp-param} (or its relevant subproblem). 
The vector $\lambda^\star$ does not depend on the choice of $\mathcal{A}$: under Assumption~\ref{ass:LICQSOSC} the multipliers are unique, so two algorithms that converge to the same KKT point necessarily report the same dual.

This observation reframes the equilibrium correspondences of Sections~\ref{sec:games:bcd}--\ref{sec:games:riem} as different routes to the same structural object. 
Whichever algorithm one runs, the inequality multipliers at convergence encode the active-constraint geometry of the solution, and---as we show in Sections~\ref{sec:taxonomy} and \ref{sec:structural}---they admit a compact, scale-free classification that yields method-independent structural certificates. 
The remainder of the paper develops this classification: Section~\ref{sec:taxonomy} introduces the regime taxonomy as a partition of the multiplier space; Section~\ref{sec:structural} establishes its invariance, stability, and boundary structure; Section~\ref{sec:classifier} presents the corresponding classifier algorithm and its analysis; Section~\ref{sec:numerical} reports numerical illustrations.


\section{Multiplier Patterns and the Regime Taxonomy}
\label{sec:taxonomy}

A central contribution of this work is the observation that, although the algorithms analyzed in Section~\ref{sec:games} differ substantially in their update mechanisms, the stationary points they reach share a common structural signature encoded in the active Lagrange multipliers. 
In this section we formalize this observation as a precise classification map on the dual variable space. 
The resulting taxonomy partitions multiplier vectors into five operational regimes whose definitions depend only on the KKT data, not on the algorithm that produced them.
The full structural theory of this taxonomy (invariance, stability, transition characterization, and the role of the hybrid regime as a topological boundary) is developed in Section~\ref{sec:structural}.


\subsection{Normalized Multipliers and Shape Features}
\label{sec:taxonomy:features}

The regime classification is built from four scale-free statistics of $\lambda^\star$. Let
\begin{equation}
\label{eq:L-def}
L(\lambda) := \|\lambda\|_1 = \sum_{i=1}^{m} \lambda_i.
\end{equation}
When $L(\lambda) > 0$ we define the normalized multiplier vector
\begin{equation}
\label{eq:lambdabar-def}
\bar\lambda := \frac{\lambda}{L(\lambda)} \in \Delta^{m-1},
\end{equation}
where $\Delta^{m-1} = \{v \in \mathbb{R}^{m}_{+} : \sum_i v_i = 1\}$ is the standard simplex. When $L(\lambda) = 0$ we set $\bar\lambda := 0$ by convention; this case will fall into the Unconstrained regime below. We write $\bar\lambda_{(1)} \ge \bar\lambda_{(2)} \ge \cdots \ge \bar\lambda_{(m)}$ for the sorted (order statistic) entries of $\bar\lambda$.

\begin{definition}[Shape features]
\label{def:features}
Fix an integer $k \in \{1,\ldots,m-1\}$ and a small parameter $\delta \in (0,1)$. The \emph{top-$k$ concentration}, \emph{$\delta$-support fraction}, and \emph{second-order activity} of $\lambda \in \mathbb{R}^{m}_{+}$ are
\begin{align}
\Sigma_k(\lambda) &:= \sum_{j=1}^{k} \bar\lambda_{(j)}, \label{eq:Sigma-def}\\
\sigma_\delta(\lambda) &:= \frac{1}{m}\,\bigl|\{\,i : \bar\lambda_i > \delta\,\}\bigr|, \label{eq:sigma-def}\\
\tau(\lambda) &:= \bar\lambda_{(2)}. \label{eq:tau-def}
\end{align}
\end{definition}

The quadruple $\bigl(L(\lambda),\Sigma_k(\lambda),\sigma_\delta(\lambda),\tau(\lambda)\bigr)$ acts as a feature vector summarizing the multiplier pattern: $L$ captures total dual mass, $\Sigma_k$ captures concentration, $\sigma_\delta$ captures spread, and $\tau$ captures whether multiple constraints are simultaneously non-negligible.

\subsection{The Four Core Regimes}
\label{sec:taxonomy:core}

We now define four core regimes as \emph{open} subsets of $\mathbb{R}^{m}_{+}$, parameterized by a quadruple
\begin{equation}
\label{eq:params}
\Theta := (\delta,\theta,\gamma,k),\quad 0 < \delta < \tfrac{1}{2},\ \theta \in (\tfrac{1}{2},1),\ \gamma \in (0,1),\ k \in \mathbb{N},
\end{equation}
with $k m \ll m$ in the regimes of practical interest (typically $k \in \{1,2,3\}$). The parameters $\delta,\theta,\gamma$ are dimensionless thresholds; we adopt the default values $(\delta,\theta,\gamma,k)=(0.05,0.7,0.4,2)$ throughout, while emphasizing that all theorems hold for any admissible $\Theta$ satisfying \eqref{eq:params}.

\begin{definition}[Core regimes]
\label{def:core-regimes}
Given parameters $\Theta$ as in \eqref{eq:params}, define the following open subsets of $\mathbb{R}^{m}_{+}$:
\begin{align}
\mathcal{R}_{\mathrm{unc}}^{\circ} &:= \bigl\{\lambda : L(\lambda) < \delta \bigr\}, \label{eq:Runc} \\
\mathcal{R}_{\mathrm{res}}^{\circ} &:= \bigl\{\lambda : L(\lambda) > \delta,\ \Sigma_k(\lambda) > \theta \bigr\}, \label{eq:Rres}\\
\mathcal{R}_{\mathrm{sat}}^{\circ} &:= \bigl\{\lambda : L(\lambda) > \delta,\ \Sigma_k(\lambda) < \theta,\ \sigma_\delta(\lambda) > \gamma \bigr\}, \label{eq:Rsat}\\
\mathcal{R}_{\mathrm{coup}}^{\circ} &:= \bigl\{\lambda : L(\lambda) > \delta,\ \Sigma_k(\lambda) < \theta, \sigma_\delta(\lambda) < \gamma,\ \tau(\lambda) > \delta \bigr\}. \label{eq:Rcoup}
\end{align}
We refer to these as the \emph{Unconstrained}, \emph{Resource-Limited}, \emph{Saturation}, and \emph{Strongly-Coupled} regimes, respectively.
\end{definition}

The four sets are interpreted as follows.
\begin{itemize}
\item $\mathcal{R}_{\mathrm{unc}}^{\circ}$ collects KKT points whose total dual mass is negligible. The stationarity condition is then dominated by $\nabla f(x^\star)$ and the constraints exert little influence. 
This regime is typical of interior solutions and early iterations of descent methods.
\item $\mathcal{R}_{\mathrm{res}}^{\circ}$ collects points where a small number ($\le k$) of constraints carry the bulk ($> \theta$) of the dual mass. 
This is the canonical "budget-binding" regime in which one or two resource-type constraints dictate the geometry of the solution.
\item $\mathcal{R}_{\mathrm{sat}}^{\circ}$ collects points where a large fraction ($> \gamma$) of constraints are individually non-negligible. 
This corresponds to many variables operating at bound and is characteristic of boundary-locked solutions on box-constrained or manifold-constrained problems.
\item $\mathcal{R}_{\mathrm{coup}}^{\circ}$ collects points where the dual mass is distributed across an intermediate number of constraints: more than one ($\tau > \delta$), but fewer than $\gamma m$ ($\sigma_\delta < \gamma$). 
This corresponds to tight inter-block coupling, frequent in consensus and hierarchical settings.
\end{itemize}

\begin{lemma}[Pairwise disjointness]
\label{lem:disjoint}
The four sets $\mathcal{R}_{\mathrm{unc}}^{\circ}$, $\mathcal{R}_{\mathrm{res}}^{\circ}$, $\mathcal{R}_{\mathrm{sat}}^{\circ}$, $\mathcal{R}_{\mathrm{coup}}^{\circ}$ are pairwise disjoint open subsets of $\mathbb{R}^{m}_{+}$.
\end{lemma}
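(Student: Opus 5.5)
The plan is to split the statement into its two claims, pairwise disjointness and openness, and to treat them separately. Disjointness is purely combinatorial: each core regime in Definition~\ref{def:core-regimes} is cut out by a sequence of strict inequalities on the features $L,\Sigma_k,\sigma_\delta,\tau$. Any two of the defining conditions are mutually exclusive, so I will simply list the pairs.

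\textbf{Disjointness.} The set $\mathcal{R}_{\mathrm{unc}}^{\circ}$ requires $L<\delta$, while the other three require $L>\delta$. The set $\mathcal{R}_{\mathrm{res}}^{\circ}$ requires $\Sigma_k>\theta$, while $\mathcal{R}_{\mathrm{sat}}^{\circ}$ and $\mathcal{R}_{\mathrm{coup}}^{\circ}$ require $\Sigma_k<\theta$. Finally, $\mathcal{R}_{\mathrm{sat}}^{\circ}$ requires $\sigma_\delta>\gamma$, while $\mathcal{R}_{\mathrm{coup}}^{\circ}$ requires $\sigma_\delta<\gamma$. These three dichotomies cover all six pairs, and no analysis is needed beyond them.

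\textbf{Openness.} I work in the relative topology of $\mathbb{R}^m_+$ and establish the regularity of each feature in turn.
\begin{itemize}
\item $L=\|\cdot\|_1$ is continuous, so $\{L<\delta\}$ and $\{L>\delta\}$ are open.
\item On the open set $\{L>\delta\}\subset\{L>0\}$, the map $\lambda\mapsto\bar\lambda=\lambda/L(\lambda)$ is continuous.
\item Each order statistic $v\mapsto v_{(j)}$ is $1$-Lipschitz in $\|\cdot\|_\infty$; this follows from the min--max characterization of sorted entries.
\item Hence $\Sigma_k=\sum_{j\le k}\bar\lambda_{(j)}$ and $\tau=\bar\lambda_{(2)}$ are continuous on $\{L>\delta\}$, and every strict inequality in them defines a relatively open set.
\end{itemize}
This already gives openness of $\mathcal{R}_{\mathrm{unc}}^{\circ}$ and $\mathcal{R}_{\mathrm{res}}^{\circ}$.

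\textbf{The feature $\sigma_\delta$.} This is the only discontinuous feature, being integer-valued, so I will not treat it by continuity but by local semicontinuity. Fix $\lambda$ with $L(\lambda)>\delta$ and set
\[
\eta:=\min_i\bigl|\bar\lambda_i-\delta\bigr|.
\]
If $\eta>0$, then for $\lambda'$ close enough that $\|\bar\lambda'-\bar\lambda\|_\infty<\eta$, the set $\{i:\bar\lambda'_i>\delta\}$ coincides with $\{i:\bar\lambda_i>\delta\}$. Thus $\sigma_\delta$ is locally constant at $\lambda$, and both $\{\sigma_\delta>\gamma\}$ and $\{\sigma_\delta<\gamma\}$ contain a neighbourhood of $\lambda$. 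In general, entries strictly above $\delta$ stay above $\delta$ under small perturbations, so $\sigma_\delta$ is lower semicontinuous. This alone gives openness of $\{\sigma_\delta>\gamma\}$, and hence of $\mathcal{R}_{\mathrm{sat}}^{\circ}$, as an intersection of finitely many open sets.

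\textbf{Main obstacle: $\{\sigma_\delta<\gamma\}$ inside $\mathcal{R}_{\mathrm{coup}}^{\circ}$.} The difficulty is a point $\lambda\in\mathcal{R}_{\mathrm{coup}}^{\circ}$ with some entries exactly at $\bar\lambda_i=\delta$. A perturbation can push those entries above $\delta$, so the count of entries above $\delta$ can only go up. The inequality $\sigma_\delta<\gamma$ is then preserved precisely when
\[
\bigl|\{i:\bar\lambda_i>\delta\}\bigr|+\bigl|\{i:\bar\lambda_i=\delta\}\bigr|<\gamma m .
\]
My first attempt will use the slack provided by the other defining constraints, $\Sigma_k<\theta$ and $\tau>\delta$, together with the simplex normalization $\sum_i\bar\lambda_i=1$. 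The goal is to bound the number of entries at or above $\delta$ by $1/\delta$ and compare this bound with $\gamma m$ in the parameter range \eqref{eq:params}.

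If this comparison does not close in full generality, I will isolate exactly which configurations with tied entries are the issue. I expect this tie-breaking step to be the real content of the openness claim, and I regard it as uncertain: the counting bound may not close in all cases, in which case the configurations isolated above are exactly where the argument would need more.

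Assuming the tie issue is handled, $\mathcal{R}_{\mathrm{coup}}^{\circ}$ is the intersection of $\{L>\delta\}$, $\{\Sigma_k<\theta\}$, $\{\tau>\delta\}$ and $\{\sigma_\delta<\gamma\}$. Each of these is open near every point of the set, so $\mathcal{R}_{\mathrm{coup}}^{\circ}$ is open.
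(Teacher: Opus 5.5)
\textbf{Verdict: there is a genuine gap, and it is exactly the step you flagged. It cannot be closed, because the lemma is false as stated.}

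Your disjointness argument is correct and matches the paper's. So is openness for $\mathcal{R}_{\mathrm{unc}}^{\circ}$, $\mathcal{R}_{\mathrm{res}}^{\circ}$ and $\mathcal{R}_{\mathrm{sat}}^{\circ}$: the paper writes $\{\sigma_\delta>\gamma\}$ as $\bigcup_{|S|>\gamma m}\bigcap_{i\in S}\{\bar\lambda_i>\delta\}$, which is your lower-semicontinuity observation.

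The problem is that $\mathcal{R}_{\mathrm{coup}}^{\circ}$ is not open in general; it fails already for the default parameters. Take $\Theta=(0.05,0.7,0.4,2)$, $m=10$ and $\lambda=(0.3,0.3,0.3,0.05,0.05,0,\dots,0)$.
\begin{itemize}
\item We have $L=1$, so $\bar\lambda=\lambda$, and $\Sigma_2=0.6<\theta$.
\item Only three entries exceed $\delta$, so $\sigma_\delta=0.3<\gamma$; also $\tau=0.3>\delta$.
\item Hence $\lambda\in\mathcal{R}_{\mathrm{coup}}^{\circ}$.
\end{itemize}
Now let $\lambda_\epsilon=(0.3,0.3,0.3-2\epsilon,0.05+\epsilon,0.05+\epsilon,0,\dots,0)$ for small $\epsilon>0$. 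It still has $L=1$ and $\Sigma_2=0.6$, but five entries exceed $\delta$. So $\sigma_\delta=0.5>\gamma$ and $\lambda_\epsilon\in\mathcal{R}_{\mathrm{sat}}^{\circ}$, while $\lambda_\epsilon\to\lambda$. Raising only one of the tied entries instead gives $\sigma_\delta=\gamma$ exactly, which is a Hybrid point. So even $\Lambda^{\circ}$ is not open, which also breaks Theorem~\ref{thm:hybrid}(a)--(b).

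Your planned counting argument cannot rescue this. A bound such as $|\{i:\bar\lambda_i\ge\delta\}|\le 1/\delta$ (or $<1/\delta$ once $\tau>\delta$ is used) falls below $\gamma m$ only when it also forces $|\{i:\bar\lambda_i>\delta\}|<\gamma m$ for every $\lambda$. That is, it succeeds only when $\mathcal{R}_{\mathrm{sat}}^{\circ}=\emptyset$. For ordinary sizes such as $m=10$ it simply fails.

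The paper's own proof has the same hole: it shows only that $\{\sigma_\delta>\gamma\}$ is open and never addresses $\{\sigma_\delta<\gamma\}$.

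A correct statement requires changing the definition. Two options work:
\begin{itemize}
\item Replace $\sigma_\delta<\gamma$ in \eqref{eq:Rcoup} by the stronger condition $\tfrac1m|\{i:\bar\lambda_i\ge\delta\}|<\gamma$. This set is open on $\{L>\delta\}$, because the index set $\{i:\bar\lambda_i\ge\delta\}$ can only lose elements under small perturbations.
\item Remove from $\mathcal{R}_{\mathrm{coup}}^{\circ}$ the null set where some $\bar\lambda_i=\delta$, and assign it to $\mathcal{R}_{\mathrm{hyb}}$.
\end{itemize}
With either change your argument goes through verbatim. Disjointness from $\mathcal{R}_{\mathrm{sat}}^{\circ}$ is preserved: for the first option because $|\{i:\bar\lambda_i\ge\delta\}|\ge|\{i:\bar\lambda_i>\delta\}|$, and trivially for the second.
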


\begin{proof}
Openness of each set follows from openness of strict inequalities in continuous quantities. 
The map $\lambda \mapsto L(\lambda)$ is continuous on $\mathbb{R}^{m}_{+}$; the maps $\lambda \mapsto \Sigma_k(\lambda)$ and $\lambda \mapsto \tau(\lambda)$ are continuous on $\{L > 0\}$ as compositions of continuous functions (sorting is continuous), and we only impose conditions on them on the open set $\{L > \delta\}$. 
The map $\sigma_\delta$ takes values in $\{0,1/m,\ldots,1\}$ and the set $\{\sigma_\delta > \gamma\}$ is open since it equals $\bigcup_{|S| > \gamma m} \bigcap_{i \in S} \{\bar\lambda_i > \delta\}$, a union of open sets.

For disjointness: $\mathcal{R}_{\mathrm{unc}}^{\circ}$ requires $L < \delta$ while the other three require $L > \delta$, so $\mathcal{R}_{\mathrm{unc}}^{\circ}$ is disjoint from the rest. 
Among the remaining three,
$\mathcal{R}_{\mathrm{res}}^{\circ}$ requires $\Sigma_k > \theta$ while
$\mathcal{R}_{\mathrm{sat}}^{\circ}$ and $\mathcal{R}_{\mathrm{coup}}^{\circ}$
require $\Sigma_k < \theta$. Finally, $\mathcal{R}_{\mathrm{sat}}^{\circ}$
requires $\sigma_\delta > \gamma$ while $\mathcal{R}_{\mathrm{coup}}^{\circ}$
requires $\sigma_\delta < \gamma$.
\end{proof}

\subsection{The Hybrid Regime as a Boundary Set}
\label{sec:taxonomy:hybrid}

Because the four core regimes are open and pairwise disjoint, their union does not cover $\mathbb{R}^{m}_{+}$. 
The points that lie outside the union are precisely those for which at least one of the defining strict inequalities fails to hold strictly; these constitute the Hybrid regime.

\begin{definition}[Hybrid regime]
\label{def:hybrid}
Let $\Lambda^{\circ} := \mathcal{R}_{\mathrm{unc}}^{\circ} \cup \mathcal{R}_{\mathrm{res}}^{\circ} \cup \mathcal{R}_{\mathrm{sat}}^{\circ} \cup \mathcal{R}_{\mathrm{coup}}^{\circ}$. The \emph{Hybrid regime} is
\begin{equation}
\label{eq:Rhyb}
\mathcal{R}_{\mathrm{hyb}} := \mathbb{R}^{m}_{+} \setminus \Lambda^{\circ}.
\end{equation}
\end{definition}

The five sets $\{\mathcal{R}_{\mathrm{unc}}^{\circ},\mathcal{R}_{\mathrm{res}}^{\circ},\mathcal{R}_{\mathrm{sat}}^{\circ},\mathcal{R}_{\mathrm{coup}}^{\circ},\mathcal{R}_{\mathrm{hyb}}\}$ form a partition of $\mathbb{R}^{m}_{+}$ by Lemma~\ref{lem:disjoint} and Definition~\ref{def:hybrid}. The structural identification of $\mathcal{R}_{\mathrm{hyb}}$ with the topological boundary of $\Lambda^{\circ}$, together with its measure-theoretic and limit-point properties, is established in Theorem~\ref{thm:hybrid} below.

\begin{definition}[Regime classification map]
\label{def:Psi}
The \emph{regime classification map} $\Psi : \mathbb{R}^{m}_{+} \to \{\mathrm{unc},\mathrm{res},\mathrm{sat},\mathrm{coup},\mathrm{hyb}\}$ assigns to each $\lambda$ the unique label $r$ such that $\lambda \in \mathcal{R}_{r}^{\circ}$ for $r \in \{\mathrm{unc},\mathrm{res},\mathrm{sat},\mathrm{coup}\}$, and $\Psi(\lambda) = \mathrm{hyb}$ otherwise.
\end{definition}

For block-structured problems and for hierarchical decompositions, $\Psi$ is applied to the relevant dual vector: per-block multipliers for BCD, scaled dual $u^k$ for ADMM, follower KKT multipliers for GBD and SCA, and Riemannian multipliers (when the manifold is described by equality constraints) in the manifold setting. The cross-method correspondence is summarized in Table~\ref{tab:manifestations}.

\begin{remark}[Choice of parameters]
\label{rem:params}
The parameters $\Theta=(\delta,\theta,\gamma,k)$ encode operationally meaningful thresholds and are not free hyperparameters in the statistical sense. In practice they can be selected either (i) on physical grounds (e.g., $k$ equals the number of resource-type constraints in a wireless allocation problem; $\delta$ is set to the numerical noise floor of the dual solver), or (ii) by clustering the empirical distribution of $\bar\lambda$ across a representative ensemble of instances. Theorem~\ref{thm:invariance} below shows that the resulting regime labels are robust to a range of choices.
\end{remark}

\begin{table}[t]
\centering
\small
\caption{Canonical regimes and their method-specific manifestations.}
\label{tab:manifestations}
\renewcommand{\arraystretch}{1.2}
\setlength{\tabcolsep}{4pt}
\begin{tabularx}{\textwidth}{@{}lcccc@{}}
\toprule
\textbf{Reg.} & \!\!\textbf{BCD} & \!\!\textbf{GBD} & \!\!\!\textbf{ADMM} & \textbf{SCA / Riem.} \\
\midrule
$\mathcal{R}_{\mathrm{unc}}$& \!Small block $\lambda$& \!Small follower $\lambda$ & Small dual $u$ & \!\!Small manifold $\lambda$ \\
$\mathcal{R}_{\mathrm{res}}$& \!\!Tight block budget & \!Large $\mu_{\mathrm{pow}}$ & Strong consensus & \!\!Active capacity \\
$\mathcal{R}_{\mathrm{sat}}$& \!Blocks at bounds & \!Selection bounds & Boundary penalty & \!Manifold locking \\
$\mathcal{R}_{\mathrm{coup}}$ & \!Inter-block ties & \!\!Tight follower~coupling & \!Large residuals & \!\!Surrogate coupling \\
$\mathcal{R}_{\mathrm{hyb}}$& \!Mixed active sets & \!Multiple~KKT~group & \!\!\!Oscillatory duals & \!\!\!\!\!Geometric~transition \\
\bottomrule
\end{tabularx}
\end{table}


\section{Structural Theorems}
\label{sec:structural}

We now establish four theorems that justify the taxonomy of Section~\ref{sec:taxonomy} as a structural object. Theorem~\ref{thm:invariance} shows that the classification map is invariant under the natural symmetries of the KKT system. Theorem~\ref{thm:stability} establishes local constancy of $\Psi$ under perturbations of the problem data within each core regime, with an explicit Lipschitz bound. Theorem~\ref{thm:transition} characterizes regime transitions as codimension-one events and shows they are generically isolated along smooth paths in parameter space. Theorem~\ref{thm:hybrid} identifies $\mathcal{R}_{\mathrm{hyb}}$ as the topological boundary of the union of core regimes, establishes its Lebesgue measure zero, and shows that each Hybrid point is approached by sequences from at least two distinct core regimes.

Throughout this section, $\Psi$ is the regime classification map of Definition~\ref{def:Psi} for fixed parameters $\Theta$ as in \eqref{eq:params}.

\subsection{Invariance}
\label{sec:structural:invariance}

\begin{theorem}[Invariances of the regime classification]
\label{thm:invariance}
Let $\lambda \in \mathbb{R}^{m}_{+}$. The classification map $\Psi$ satisfies the following invariances.
\begin{enumerate}
\item[(i)] \emph{(Permutation invariance.)} For any permutation $\pi \in S_m$, $\Psi(\pi \cdot \lambda) = \Psi(\lambda)$, where $(\pi \cdot \lambda)_i = \lambda_{\pi(i)}$.
\item[(ii)] \emph{(Uniform multiplier rescaling.)} For any $c > 0$, the shape features $\Sigma_k(c\lambda)=\Sigma_k(\lambda)$, $\sigma_\delta(c\lambda)=\sigma_\delta(\lambda)$, and $\tau(c\lambda)=\tau(\lambda)$. Consequently, if both $L(\lambda)$ and $L(c\lambda)=cL(\lambda)$ lie on the same side of $\delta$, then $\Psi(c\lambda) = \Psi(\lambda)$.
\item[(iii)] \emph{(Uniform constraint rescaling.)} Suppose the problem \eqref{eq:nlp-param} is modified by replacing each $g_i$ with $c\,g_i$ for a single $c>0$. 
Then the modified problem has the same primal solution $x^\star$, and its multipliers are $\lambda^\star_{\mathrm{new}} = \lambda^\star / c$. Hence $\Psi(\lambda^\star_{\mathrm{new}}) = \Psi(\lambda^\star)$ whenever $L(\lambda^\star)/c$ and $L(\lambda^\star)$ lie on the same side of $\delta$.
\item[(iv)] \emph{(Diffeomorphism invariance.)} Let $\phi : U \to V$ be a $C^2$ diffeomorphism between open neighborhoods $U,V \subset \mathbb{R}^{n}$, and define the transformed problem $\tilde f := f \circ \phi^{-1}$, $\tilde g_i := g_i \circ \phi^{-1}$, $\tilde h_j := h_j \circ \phi^{-1}$. 
If $(x^\star,\lambda^\star,\mu^\star)$ is a KKT triple of \eqref{eq:nlp-param} satisfying Assumption~\ref{ass:LICQSOSC}, then $(\phi(x^\star),\lambda^\star,\mu^\star)$ is a KKT triple of the transformed problem satisfying Assumption~\ref{ass:LICQSOSC}, and $\Psi$ assigns the same label.
\end{enumerate}
\end{theorem}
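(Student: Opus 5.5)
The plan is to reduce all four items to elementary properties of the feature quadruple $(L,\Sigma_k,\sigma_\delta,\tau)$. The map $\Psi$ of Definition~\ref{def:Psi} depends on $\lambda$ only through these features, via the strict inequalities in \eqref{eq:Runc}--\eqref{eq:Rcoup}. So it suffices in each case to show that the relevant transformation leaves $\Sigma_k,\sigma_\delta,\tau$ unchanged and leaves $L$ on the same side of $\delta$. The Hybrid label is the complement of the union of the four core regimes, so it is then preserved automatically.

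\textbf{Items (i) and (ii).} For (i), I will note that $L(\pi\cdot\lambda)=L(\lambda)$ and $\overline{\pi\cdot\lambda}=\pi\cdot\bar\lambda$. Consequently the order statistics $\bar\lambda_{(j)}$ are unchanged, which gives invariance of $\Sigma_k$ and $\tau$. The set $\{i:\bar\lambda_i>\delta\}$ is mapped bijectively by $\pi^{-1}$, so its cardinality, and hence $\sigma_\delta$, is also unchanged. For (ii), I use $L(c\lambda)=cL(\lambda)$. When $L(\lambda)>0$ this gives $\overline{c\lambda}=\bar\lambda$, so all three shape features coincide. When $L(\lambda)=0$, both vectors are $0$ and lie in $\mathcal{R}_{\mathrm{unc}}^\circ$. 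Under the side-of-$\delta$ hypothesis, the $L$-conditions in each core regime hold for $c\lambda$ exactly when they hold for $\lambda$, so the labels agree. This covers boundary cases with $L=\delta$ as well: there both points fail every core inequality and are labelled Hybrid.

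\textbf{Item (iii).} The feasible set $\{cg_i\le0\}$ equals $\{g_i\le0\}$ and the active sets coincide, so $x^\star$ remains a KKT point of the modified problem. Setting $\lambda_{\mathrm{new}}=\lambda^\star/c$ reproduces the stationarity identity \eqref{eq:kkt-stat}, because $\sum_i(\lambda^\star_i/c)\nabla(cg_i)=\sum_i\lambda^\star_i\nabla g_i$. Complementarity and sign conditions are clearly preserved. Active gradients are only scaled by $c$, so LICQ is preserved, and uniqueness of multipliers under Assumption~\ref{ass:LICQSOSC} shows that $\lambda^\star/c$ is \emph{the} multiplier. The conclusion then follows from (ii) with scale factor $1/c$.

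\textbf{Item (iv).} I first set $\tilde x=\phi(x^\star)$ and $J=D\phi^{-1}(\tilde x)=(D\phi(x^\star))^{-1}$. The chain rule gives $\nabla\tilde f(\tilde x)=J^\top\nabla f(x^\star)$, and likewise for each $\tilde g_i,\tilde h_j$. Hence $\nabla_{\tilde x}\tilde\Lagrange=J^\top\nabla_x\Lagrange=0$ with the \emph{same} $(\lambda^\star,\mu^\star)$. Feasibility, complementarity and active sets are preserved because $\tilde g_i(\tilde x)=g_i(x^\star)$. LICQ transfers because $J^\top$ is invertible, and multiplier uniqueness again pins down the dual.

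The step I expect to be the main obstacle is SOSC. Differentiating $\tilde\Lagrange=\Lagrange\circ\phi^{-1}$ twice gives
\begin{equation*}
\nabla^2\tilde\Lagrange(\tilde x)=J^\top\nabla^2\Lagrange(x^\star)J+\sum_{l}\partial_l\Lagrange(x^\star)\,\nabla^2(\phi^{-1})_l(\tilde x).
\end{equation*}
The second term vanishes because $\nabla_x\Lagrange(x^\star)=0$; this is the crucial use of stationarity, and it requires $\phi\in C^2$. The critical cone transforms as $\tilde{\mathcal C}=J^{-1}\mathcal C$, since each defining constraint $\nabla\tilde g_i^\top\tilde d=\nabla g_i^\top J\tilde d$. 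So for nonzero $\tilde d\in\tilde{\mathcal C}$ we have $d=J\tilde d\in\mathcal C\setminus\{0\}$, and $\tilde d^\top\nabla^2\tilde\Lagrange\,\tilde d=d^\top\nabla^2\Lagrange\,d>0$. Since $\lambda^\star$ is literally unchanged, $\Psi$ returns the same label, with no side-of-$\delta$ caveat needed in this case.
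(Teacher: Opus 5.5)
Your proposal is correct and follows the paper's proof essentially step for step. The shared steps are: the features are symmetric under permutation; $\bar\lambda$ is invariant under positive scaling, with $L$ entering only through the $\delta$-threshold; the substitution $\lambda^\star\mapsto\lambda^\star/c$ combined with multiplier uniqueness under LICQ; and the chain-rule transfer of stationarity, active sets and LICQ via the invertible $J=D\phi^{-1}(\phi(x^\star))$. The one place you go further is SOSC: the paper only cites a standard calculation, whereas you carry it out, showing that the $\nabla^2(\phi^{-1})_l$ term vanishes by stationarity and that the critical cone maps as $\tilde{\mathcal C}=J^{-1}\mathcal C$, which is exactly the right argument.
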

\begin{proof}
\textit{(i)} The maps $L, \Sigma_k, \sigma_\delta, \tau$ are all symmetric functions of $\lambda$ since they depend only on the multiset of entries (through $\|\lambda\|_1$ and the order statistics of $\bar\lambda$). Hence each defining inequality in \eqref{eq:Runc}--\eqref{eq:Rcoup} is preserved under permutation, and $\Psi$ is invariant.

\textit{(ii)} For $c > 0$, $\overline{c\lambda} = c\lambda/(c\|\lambda\|_1) = \lambda/\|\lambda\|_1 = \bar\lambda$, so the normalized vector is unchanged and so are $\Sigma_k, \sigma_\delta, \tau$. The only feature affected by rescaling is $L(c\lambda) = c\,L(\lambda)$, which enters the definitions only through the threshold $L \gtrless \delta$. The conclusion follows.

\textit{(iii)} The original KKT stationarity reads $\nabla f(x^\star) + \sum_i \lambda^\star_i \nabla g_i(x^\star) + \sum_j \mu^\star_j \nabla h_j(x^\star)$ $ = 0$. Replacing $g_i$ with $c\,g_i$ and $\lambda^\star_i$ with $\lambda^\star_i/c$ leaves the stationarity equation unchanged; complementarity and feasibility are likewise preserved. Uniqueness of multipliers under LICQ then implies $\lambda^\star_{\mathrm{new}} = \lambda^\star / c$. The regime claim follows from \textit{(ii)}.

\textit{(iv)} Set $y^\star := \phi(x^\star)$ and write $\phi^{-1}$ for the inverse diffeomorphism. Differentiating $\tilde g_i(y) = g_i(\phi^{-1}(y))$ at $y^\star$ gives
\[
\nabla \tilde g_i(y^\star) = \bigl(D\phi^{-1}(y^\star)\bigr)^\top \nabla g_i(x^\star),
\]
and similarly for $\tilde f, \tilde h_j$. Substituting into the stationarity equation for the transformed problem,
\[
\bigl(D\phi^{-1}(y^\star)\bigr)^\top \Bigl[\nabla f(x^\star) + \sum_i \lambda^\star_i \nabla g_i(x^\star) + \sum_j \mu^\star_j \nabla h_j(x^\star)\Bigr] = 0,
\]
which holds because the bracketed expression vanishes by stationarity of the original triple. Since $D\phi^{-1}(y^\star)$ is invertible by assumption, this means $(y^\star, \lambda^\star, \mu^\star)$ is stationary for the transformed problem. Active sets are preserved because $\tilde g_i(y^\star) = g_i(x^\star)$, so complementarity and feasibility carry over. 
LICQ for the transformed problem follows from LICQ for the original by the invertibility of $D\phi^{-1}(y^\star)$ applied to the gradients. SOSC is preserved by an analogous transformation of the Hessian of the Lagrangian via $D\phi^{-1}$ (a standard calculation; see \cite{bonnans2000perturbation}). 
The multiplier vector $\lambda^\star$ is unchanged, so all features in Definition~\ref{def:features} and hence $\Psi$ are preserved.
\end{proof}

\begin{remark}[Non-uniform constraint rescaling]
\label{rem:nonuniform}
Theorem~\ref{thm:invariance}(iii) is stated for uniform rescaling $g_i \mapsto c\,g_i$ with a single $c>0$. 
Under non-uniform rescaling $g_i \mapsto c_i g_i$, the multipliers transform as $\lambda^\star_i \mapsto \lambda^\star_i / c_i$, and $\bar\lambda$ is in general not preserved.
Consequently, $\Psi$ may change. This is appropriate: non-uniform rescaling alters the relative magnitudes of constraint sensitivities and therefore genuinely changes which constraints dominate. 
To recover invariance one must co-transform the thresholds; this is straightforward but not used in what follows.
\end{remark}

A direct corollary is that the classification depends only on the KKT data at $x^\star$ and not on the algorithm that produced it.

\begin{corollary}[Method-independence]
\label{cor:method-indep}
If $(x^\star,\lambda^\star,\mu^\star)$ is a KKT triple satisfying Assumption~\ref{ass:LICQSOSC}, then $\Psi(\lambda^\star)$ is the same regardless of which of the algorithms in Section~\ref{sec:games} produced it.
\end{corollary}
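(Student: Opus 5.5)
The plan is to reduce the statement to uniqueness of multipliers under Assumption~\ref{ass:LICQSOSC}, combined with the fact that $\Psi$ is a genuine function of $\lambda$ (Definition~\ref{def:Psi}). Suppose two algorithms $\mathcal{A}_1,\mathcal{A}_2$ from Section~\ref{sec:games} converge to the same primal point $x^\star$ and report multiplier vectors $\lambda^{(1)},\lambda^{(2)}$. By Proposition~\ref{prop:unified}(ii), each triple $(x^\star,\lambda^{(\ell)},\mu^{(\ell)})$ satisfies the KKT system \eqref{eq:kkt}, of the original problem or of the relevant subproblem. It then suffices to show $\lambda^{(1)}=\lambda^{(2)}$, since then $\Psi(\lambda^{(1)})=\Psi(\lambda^{(2)})$ trivially.

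For the uniqueness step, complementarity \eqref{eq:kkt-comp} forces $\operatorname{supp}(\lambda^{(\ell)})\subseteq\mathcal{A}(x^\star)$. Subtracting the two stationarity equations \eqref{eq:kkt-stat} gives
\[
\sum_{i\in\mathcal{A}(x^\star)}\bigl(\lambda^{(1)}_i-\lambda^{(2)}_i\bigr)\nabla g_i(x^\star)+\sum_{j=1}^p\bigl(\mu^{(1)}_j-\mu^{(2)}_j\bigr)\nabla h_j(x^\star)=0 .
\]
LICQ (Assumption~\ref{ass:LICQSOSC}(i)) then forces all coefficients to vanish. SOSC is not needed for this step; it only guarantees that $x^\star$ is an isolated local minimizer, so that ``the same KKT point'' is well defined.

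The main obstacle is interpretive rather than computational. For decomposition methods (BCD, GBD, ADMM), Proposition~\ref{prop:unified} only guarantees a KKT triple of a subproblem, and the dual vector fed to $\Psi$ (per-block multipliers, the scaled dual $u^\star$, or follower multipliers) need not coincide with $\lambda^\star$ of \eqref{eq:nlp-param}. I would therefore read the corollary as a statement about multipliers of a fixed problem. Whenever two methods output a multiplier vector for the inequality constraints of \eqref{eq:nlp-param} at $x^\star$, the argument above applies verbatim.

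For the subproblem cases, I would argue case by case that the subproblem's KKT multipliers agree with those of \eqref{eq:nlp-param}:
\begin{itemize}
\item In BCD, the block stationarity conditions, taken jointly, reproduce \eqref{eq:kkt-stat}.
\item In ADMM, at a fixed point the scaled dual satisfies $\rho u^\star=$ the multiplier of the consensus constraint.
\item In GBD and SCA, first-order consistency transfers the follower multipliers to the original constraints.
\end{itemize}
Uniqueness then holds in each case.

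A further subtlety is the rescaling $\rho u^\star$ in ADMM, which rescales the vector by $\rho$. Theorem~\ref{thm:invariance}(ii) handles this: the shape features are unchanged, and the label is preserved provided $L$ stays on the same side of $\delta$. I would state this caveat explicitly, or normalize to the unscaled dual before applying $\Psi$.
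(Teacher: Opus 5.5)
This is correct and is essentially the paper's argument. The paper justifies the corollary only through the remark in Section~\ref{sec:games:bridge} that multipliers are unique under Assumption~\ref{ass:LICQSOSC}, so algorithms converging to the same $x^\star$ report the same $\lambda^\star$ and hence receive the same label $\Psi(\lambda^\star)$; your LICQ subtraction argument just makes that uniqueness explicit. Your caveats about decomposition methods go beyond the paper, which glosses over them: the ADMM rescaling point is well taken, since the paper feeds the scaled dual $u^\star$ to $\Psi$, whereas for GBD no first-order-consistency argument is needed, because the follower problem at the terminal $y^\star$ is itself the continuous problem.
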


\subsection{Stability under Perturbation}
\label{sec:structural:stability}

We now show that within the interior of any core regime, the regime label is locally constant under perturbations of the problem data $\omega$. 
This is the foundational property that allows downstream papers to invoke the regime label as a robust structural certificate.

We say that a feature triple $(L^\star, \Sigma_k^\star, \sigma_\delta^\star, \tau^\star)$ associated with a KKT point in regime $\mathcal{R}_r^{\circ}$ has \emph{feature margin}
\begin{equation}
\label{eq:feature-margin}
\rho_r(\lambda^\star) := \operatorname{dist}\bigl(\lambda^\star,\,\partial \mathcal{R}_r^{\circ}\bigr) > 0,
\end{equation}
where $\partial \mathcal{R}_r^{\circ}$ is the topological boundary in $\mathbb{R}^{m}_{+}$. 
Equivalently, $\rho_r$ is the minimum slack across all strict inequalities defining $\mathcal{R}_r^{\circ}$ (under any norm equivalent on $\mathbb{R}^{m}$), and is strictly positive for any $\lambda^\star \in \mathcal{R}_r^{\circ}$.

\begin{theorem}[Stability of the regime label]
\label{thm:stability}
Suppose the problem data $\omega \mapsto (f(\cdot;\omega),$ $g(\cdot;\omega), h(\cdot;\omega))$ is $C^2$ on a neighborhood $\Omega_0$ of $\omega^\star$, and that the KKT triple $(x^\star,\lambda^\star,\mu^\star)$ at $\omega^\star$ satisfies Assumption~\ref{ass:LICQSOSC} together with strict complementarity.

Then there exist a neighborhood $U \subseteq \Omega_0$ of $\omega^\star$ and constants $\kappa, \eta_0 > 0$ such that:
\begin{enumerate}
\item[(a)] (Strong regularity.) There is a unique $C^1$ map $\omega \mapsto (x^\star(\omega),\lambda^\star(\omega),\mu^\star(\omega))$ on $U$ satisfying the KKT conditions at $\omega^\star$, with
\begin{equation}
\label{eq:lambda-lipschitz}
\|\lambda^\star(\omega) - \lambda^\star\| \le \kappa\,\|\omega - \omega^\star\|, \quad \forall \omega \in U.
\end{equation}
\item[(b)] (Constant regime.) If $\lambda^\star \in \mathcal{R}_r^{\circ}$ for some $r \in \{\mathrm{unc},\mathrm{res},\mathrm{sat},\mathrm{coup}\}$ and we set
\begin{equation}
\label{eq:eta-bound}
\eta := \min\left\{\eta_0,\ \frac{\rho_r(\lambda^\star)}{C_\Psi\,\kappa}\right\},
\end{equation}
where $C_\Psi$ is the (finite) joint Lipschitz constant of the feature map $\lambda \mapsto (L,\Sigma_k,\sigma_\delta,\tau)$ on a bounded neighborhood of $\lambda^\star$, then
\begin{equation}
\label{eq:psi-constant}
\Psi(\lambda^\star(\omega)) = r,\quad \forall \omega \in U \text{ with } \|\omega - \omega^\star\| < \eta.
\end{equation}
\end{enumerate}
\end{theorem}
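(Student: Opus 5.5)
Part (a) follows from the implicit function theorem applied to the KKT system. Under strict complementarity the active set $\mathcal{A}:=\mathcal{A}(x^\star)$ coincides with $\operatorname{supp}(\lambda^\star)$. Near $(x^\star,\lambda^\star,\mu^\star,\omega^\star)$ we may therefore replace the complementarity system \eqref{eq:kkt-comp} by a system of equations:
\[
F(x,\lambda_{\mathcal{A}},\mu;\omega) := \bigl(\nabla_x \Lagrange(x,\lambda,\mu;\omega),\ g_{\mathcal{A}}(x;\omega),\ h(x;\omega)\bigr) = 0,
\]
with $\lambda_i = 0$ imposed for $i \notin \mathcal{A}$.

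The Jacobian of $F$ in $(x,\lambda_{\mathcal{A}},\mu)$ is the KKT matrix
\[
\begin{pmatrix} \nabla^2_{xx}\Lagrange & J^\top \\ J & 0\end{pmatrix},
\qquad J := \begin{pmatrix} \nabla g_{\mathcal{A}}^\top \\ \nabla h^\top\end{pmatrix}.
\]
We check that it is nonsingular by a standard argument. Suppose $(d,v)$ lies in its kernel. Then $Jd = 0$, so $d$ belongs to the critical cone, which here is the subspace $\ker J$ because $\mathcal{A}=\operatorname{supp}(\lambda^\star)$. Moreover $d^\top\nabla^2_{xx}\Lagrange\, d = -d^\top J^\top v = 0$, so SOSC forces $d=0$. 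Then $J^\top v = 0$, and LICQ gives $v=0$.

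Since $F$ is $C^1$ jointly in all arguments (the data are $C^2$), the implicit function theorem gives a unique $C^1$ solution map $\omega\mapsto(x^\star(\omega),\lambda^\star_{\mathcal{A}}(\omega),\mu^\star(\omega))$ on a neighborhood $U$ of $\omega^\star$. Shrinking $U$ if needed, we verify that this map solves the full KKT system:
\begin{itemize}
\item $\lambda^\star_i(\omega) > 0$ for $i \in \mathcal{A}$, by continuity and strict complementarity;
\item $g_i(x^\star(\omega);\omega) < 0$ for $i \notin \mathcal{A}$, by continuity;
\item SOSC and LICQ persist, so the triple is the locally unique KKT triple, which is Robinson's strong regularity.
\end{itemize}
Taking $U$ with compact closure, we set $\kappa := \sup_U \|D_\omega \lambda^\star(\omega)\|$. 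This is finite, and the mean value inequality on a convex $U$ yields \eqref{eq:lambda-lipschitz}.

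For part (b), fix $r$ with $\lambda^\star\in\mathcal{R}_r^\circ$. Choose $\eta_0$ so that the ball $\{\|\omega-\omega^\star\|<\eta_0\}$ lies in $U$. Then for $\|\omega-\omega^\star\|<\eta$, \eqref{eq:lambda-lipschitz} gives
\[
\|\lambda^\star(\omega)-\lambda^\star\| < \kappa\eta \le \rho_r(\lambda^\star)/C_\Psi \le \rho_r(\lambda^\star),
\]
provided we normalize so that $C_\Psi\ge 1$; enlarging a Lipschitz constant is harmless. Since $\rho_r$ is the distance from $\lambda^\star$ to $\partial\mathcal{R}_r^\circ$, the open ball of that radius around $\lambda^\star$ contains no boundary point of $\mathcal{R}_r^\circ$. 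A connected ball that meets the open set $\mathcal{R}_r^\circ$ and avoids its boundary is contained in it. Hence $\lambda^\star(\omega)\in\mathcal{R}_r^\circ$, and $\Psi(\lambda^\star(\omega))=r$ by Definition~\ref{def:Psi}. This step uses only openness (Lemma~\ref{lem:disjoint}) and the definition \eqref{eq:feature-margin}. The constant $C_\Psi$ enters only if one instead reads $\rho_r$ as the feature-space slack; in that case $C_\Psi$ converts multiplier displacement into feature displacement.

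\textbf{Main obstacle.} The hard step is making $C_\Psi$ meaningful, since $\sigma_\delta$ is integer-valued and not Lipschitz. We handle this by using the distance-to-boundary formulation, which needs no Lipschitz property of $\sigma_\delta$, and by interpreting $C_\Psi$ as the Lipschitz constant of the continuous features $(L,\Sigma_k,\tau)$ together with the entrywise maps $\lambda\mapsto\bar\lambda_i-\delta$. The latter control $\sigma_\delta$ through its threshold events. These maps are Lipschitz on a neighborhood of $\lambda^\star$ where $L$ is bounded below (the $\mathrm{unc}$ case, where only $L$ matters, is handled separately). So in either reading the slack bound suffices.
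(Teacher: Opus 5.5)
Your proof is correct. Part (a) matches the paper, but part (b) takes a genuinely different route.

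\textbf{Part (a).} The paper cites Robinson's strong regularity for $F(z;\omega)+N_K(z)\ni 0$ and then applies the implicit function theorem to the equality system fixed by strict complementarity. You go straight to that reduced system and verify nonsingularity of the KKT matrix yourself, which is all strong regularity amounts to here.

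\textbf{Part (b).} The paper works in feature space. It asserts that $T=(L,\Sigma_k,\sigma_\delta,\tau)$ is $C_\Psi$-Lipschitz near $\lambda^\star$, so a displacement below $\rho_r/C_\Psi$ preserves every defining inequality. You instead read \eqref{eq:feature-margin} literally as a distance to $\partial\mathcal{R}_r^\circ$ and conclude by connectedness of $\{\lambda\in\mathbb{R}^m_+:\|\lambda-\lambda^\star\|<\rho_r\}$.

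Your route is the more rigorous one, because it needs no regularity of the features. By contrast, $\sigma_\delta$ jumps on $\{\bar\lambda_i=\delta\}$. If some $\bar\lambda_i^\star=\delta$ while $\lambda^\star$ sits well inside $\mathcal{R}_{\mathrm{res}}^\circ$ or $\mathcal{R}_{\mathrm{sat}}^\circ$, the label is stable but $T$ has no finite Lipschitz constant near $\lambda^\star$. The paper's proof also conflates the $\lambda$-space distance with a feature-space slack.

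The price of your route is that $C_\Psi$ plays no real role. Beyond $C_\Psi\ge 1$ (automatic in the usual norms, since $L$ is a coordinate of $T$), your argument actually yields the larger radius $\min\{\eta_0,\rho_r/\kappa\}$. The paper's route is what one would need for a radius computable from feature slacks as in Definition~\ref{def:feat-margin}. That only works after adding $\min_i|\bar\lambda_i-\delta|$ to the slack, as your closing remark hints. With $|\sigma_\delta-\gamma|$ alone the label is not protected in $\mathcal{R}_{\mathrm{sat}}^\circ$ or $\mathcal{R}_{\mathrm{coup}}^\circ$, so ``in either reading'' is too strong for the definition as written.

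\textbf{Two small repairs.}
\begin{enumerate}
\item Uniqueness of the KKT map uniformly in $\omega$ should not rest on persistence of LICQ and SOSC. It comes from observing that any KKT triple near $(x^\star,\lambda^\star,\mu^\star)$ has $\lambda_i>0$ on $\mathcal{A}$ and $g_i<0$ off $\mathcal{A}$. Hence it solves $F=0$, where the implicit function theorem already gives uniqueness.
\item Your connectedness step needs no openness from Lemma~\ref{lem:disjoint}: a connected set that meets any set $S$ and misses $\partial S$ lies in the interior of $S$. This matters because $\sigma_\delta$ is only lower semicontinuous, and $\mathcal{R}_{\mathrm{coup}}^\circ$ is not open in general. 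At a point of $\mathcal{R}_{\mathrm{coup}}^\circ$ where some $\bar\lambda_i$ equals $\delta$ exactly, nudging such entries upward can push $\sigma_\delta$ to or past $\gamma$ and leave the regime. There $\rho_r=0$, and the conclusion is vacuous rather than false.
\end{enumerate}
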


\begin{proof}
\textit{(a)} Stack the KKT conditions \eqref{eq:kkt} for the parametric problem into a generalized equation
\[
F(z;\omega) + N_K(z) \ni 0,
\]
where $z = (x,\lambda,\mu)$, $K = \mathbb{R}^n \times \mathbb{R}^m_+ \times \mathbb{R}^p$, $N_K$ is the normal cone, and $F$ stacks $\nabla_x \Lagrange, g, h$. 
Under LICQ, SOSC and strict complementarity, this generalized equation is \emph{strongly regular} at $(z^\star;\omega^\star)$ in the sense of Robinson \cite{robinson1980}.
Robinson's strong regularity theorem yields a unique single-valued, locally Lipschitz solution map $\omega \mapsto z(\omega)$ on a neighborhood $U$ of $\omega^\star$, with Lipschitz constant $\kappa$ controlled by the operator norm of the inverse of the KKT Jacobian
\[
M^\star := \begin{bmatrix} \nabla^2_{xx}\Lagrange^\star & (\nabla g^\star_{\mathcal{A}})^\top & (\nabla h^\star)^\top \\ \nabla g^\star_{\mathcal{A}} & 0 & 0 \\ \nabla h^\star & 0 & 0 \end{bmatrix},
\]
restricted to the active set; nonsingularity of $M^\star$ is exactly Assumption~\ref{ass:LICQSOSC} together with strict complementarity (see \cite{bonnans2000perturbation, nocedal2006numerical,wright1997primal}). 
$C^1$ smoothness follows from the implicit function theorem applied to the equality KKT system once strict complementarity fixes the active set locally. 
Specializing the Lipschitz bound to the $\lambda$-component yields \eqref{eq:lambda-lipschitz}.

\textit{(b)} The feature map $T : \lambda \mapsto (L(\lambda), \Sigma_k(\lambda), \sigma_\delta(\lambda), \tau(\lambda))$ is locally Lipschitz on every bounded set in $\mathbb{R}^m_+$ away from sorting ties: $L$ is $1$-Lipschitz in the $\ell_1$ norm, $\Sigma_k$ and $\tau$ are Lipschitz on $\{L \ge L(\lambda^\star)/2\}$ because $\bar\lambda$ is Lipschitz in $\lambda$ on this set and the order statistic operator is $1$-Lipschitz in $\ell_\infty$, and $\sigma_\delta$, while integer-valued, is constant on each open set where no entry of $\bar\lambda$ equals $\delta$ exactly (so on the interior of any regime, $\sigma_\delta$ is locally constant and trivially Lipschitz). 
Let $C_\Psi$ be a finite joint Lipschitz constant of $T$ on a closed ball $B$ around $\lambda^\star$ on which $L \ge L(\lambda^\star)/2$ and no order-statistic ties cross thresholds.

By definition of $\rho_r(\lambda^\star)$ in \eqref{eq:feature-margin}, every $\lambda \in \mathbb{R}^m_+$ with $\|\lambda - \lambda^\star\| < \rho_r(\lambda^\star)/C_\Psi$ has $T(\lambda)$ satisfying all strict inequalities defining $\mathcal{R}_r^\circ$. 
Combining this with the Lipschitz bound \eqref{eq:lambda-lipschitz} gives the threshold \eqref{eq:eta-bound} (with $\eta_0$ chosen so that $U \cap B(\omega^\star,\eta_0)$ stays inside $U$ and $\lambda^\star(\omega)$ stays inside $B$). 
For $\|\omega-\omega^\star\| < \eta$ we then have $\|\lambda^\star(\omega) - \lambda^\star\| \le \kappa \eta \le \rho_r(\lambda^\star)/C_\Psi$, so $\lambda^\star(\omega) \in \mathcal{R}_r^\circ$ and $\Psi(\lambda^\star(\omega)) = r$.
\end{proof}

\begin{corollary}[Quantitative regime persistence]
\label{cor:persistence}
Under the hypotheses of Theorem~\ref{thm:stability}, the regime label is preserved under any perturbation of $\omega$ of magnitude up to $\rho_r(\lambda^\star)/(C_\Psi \kappa)$.
In particular, if the data $\omega$ is the realization of a random vector with bounded perturbation $\|\omega - \omega^\star\| \le \varepsilon$ almost surely and $\varepsilon < \rho_r(\lambda^\star)/(C_\Psi \kappa)$, then $\Psi(\lambda^\star(\omega))$ is almost-surely equal to $\Psi(\lambda^\star)$.
\end{corollary}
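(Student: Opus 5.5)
The corollary is a direct restatement of Theorem~\ref{thm:stability}(b), combined with a trivial almost-sure argument. We fix the neighborhood $U$ and the constants $\kappa,\eta_0,C_\Psi$ supplied by Theorem~\ref{thm:stability}, with $\lambda^\star\in\mathcal{R}_r^{\circ}$ and margin $\rho_r(\lambda^\star)>0$ as in \eqref{eq:feature-margin}. The first claim then reads: for every $\omega\in U$ with $\|\omega-\omega^\star\|<\rho_r(\lambda^\star)/(C_\Psi\kappa)$, we have $\Psi(\lambda^\star(\omega))=r$.

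\textbf{Step 1: the deterministic claim.} Rather than pass through $\eta=\min\{\eta_0,\rho_r/(C_\Psi\kappa)\}$, we would rerun the final chain in the proof of Theorem~\ref{thm:stability}(b) directly. By \eqref{eq:lambda-lipschitz},
\[
\|\lambda^\star(\omega)-\lambda^\star\|\le\kappa\|\omega-\omega^\star\|<\rho_r(\lambda^\star)/C_\Psi .
\]
The Lipschitz property of the feature map $T$, with constant $C_\Psi$, then keeps every defining strict inequality of $\mathcal{R}_r^{\circ}$ satisfied, so $\lambda^\star(\omega)\in\mathcal{R}_r^{\circ}$. This needs $\lambda^\star(\omega)$ to stay in the ball $B$ on which $C_\Psi$ is valid. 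We would ensure this by shrinking $U$ if necessary, and by reading "any perturbation" as any perturbation in $U$ (equivalently, assuming $\eta_0\ge\rho_r/(C_\Psi\kappa)$, which can always be arranged by decreasing the effective margin).

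\textbf{Step 2: the probabilistic claim.} Suppose $\|\omega-\omega^\star\|\le\varepsilon<\rho_r(\lambda^\star)/(C_\Psi\kappa)$ almost surely. Then the event $E=\{\|\omega-\omega^\star\|\le\varepsilon\}$ has probability one, and on $E$ the strict inequality of Step~1 holds. Hence $\Psi(\lambda^\star(\omega))=\Psi(\lambda^\star)$ on $E$, that is, almost surely. Measurability is not an issue: the conclusion holds pointwise on a full-measure event, and $\Psi\circ\lambda^\star$ is piecewise constant on $U$ with open level sets, so it is Borel.

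\textbf{Main obstacle.} The only delicate point is the domain bookkeeping in Step~1. The bound $\rho_r/(C_\Psi\kappa)$ is meaningful only where the solution map $\omega\mapsto\lambda^\star(\omega)$ exists, which is on $U$, and where the local Lipschitz constant $C_\Psi$ holds, which is on $B$. We would handle this by stating explicitly that the perturbation must remain in $U\cap B(\omega^\star,\eta_0)$, so that the corollary is exactly \eqref{eq:psi-constant} with $\eta$ attaining its second argument. Beyond that, no new estimates are needed.
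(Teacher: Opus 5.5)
Your route is the paper's own. The paper gives no separate proof: the corollary is meant as \eqref{eq:psi-constant} read with $\eta$ at its second argument, obtained as you do by rerunning the chain through the feature map $T$. It therefore inherits the paper's gap, which is the step you flag yourself, and your patch does not close it.

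The chain through $T$ is valid only while $\lambda^\star(\omega)$ stays in the ball $B$ on which $C_\Psi$ was computed, and that is exactly what $\eta_0$ encodes. Restricting to $U\cap B(\omega^\star,\eta_0)$ returns Theorem~\ref{thm:stability}(b) with radius $\min\{\eta_0,\rho_r/(C_\Psi\kappa)\}$, not the uncapped radius in the statement. ``Decreasing the effective margin'' (equivalently, inflating $C_\Psi$) proves the claim only for a different constant.

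Worse, the cap cannot be removed under the reading of $\rho_r$ that your Lipschitz step actually needs. That reading is the minimal feature slack of Definition~\ref{def:feat-margin}, which the paper's ``Equivalently'' sentence conflates with \eqref{eq:feature-margin}. Take $\min_x \tfrac12\|x\|^2+\omega^\top x$ s.t.\ $x\ge 0$. Then $\lambda^\star(\omega)=\max(\omega,0)$ componentwise with $\kappa=1$, and Assumption~\ref{ass:LICQSOSC} plus strict complementarity hold whenever $\omega$ has no zero entry. Use $m=10$, the default $\Theta$, and $\omega^\star=(0.3,0.3,0.3,0.05-\epsilon,0.05-\epsilon,-1,\ldots,-1)$ with small $\epsilon>0$. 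The multiplier lies in $\mathcal{R}_{\mathrm{coup}}^\circ$ with all slacks $\gtrsim 0.1$. Also $C_\Psi$ is bounded independently of $\epsilon$: on the small ball where $\sigma_\delta$ is constant, only $L,\Sigma_2,\tau$ matter. So the claimed radius $\rho_r/(C_\Psi\kappa)$ stays bounded away from zero. Yet raising $\omega_4,\omega_5$ by $2\epsilon$ pushes two normalized entries above $\delta$, giving $\sigma_\delta=0.5>\gamma$. The label becomes $\mathrm{sat}$ at perturbation size $2\sqrt2\,\epsilon\to 0$.

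What proves the corollary as written is to take \eqref{eq:feature-margin} literally, $\rho_r=\operatorname{dist}(\lambda^\star,\partial\mathcal{R}_r^\circ)$ in $\lambda$-space, and to bypass $T$ entirely. Under this reading your Lipschitz step is a non sequitur anyway, since feature slacks can be much smaller than $\rho_r$. For example, $\Sigma_k$ is scale-invariant, so its slack does not grow with $L$, while the $\lambda$-distance to $\{\Sigma_k=\theta\}$ does.

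The argument runs as follows.
\begin{itemize}
\item First, $C_\Psi\ge 1$ for the $\ell_p$ norms in use: along $\Delta\lambda=s\mathbf{1}$ the $L$-component alone changes by $\|\Delta\lambda\|_1\ge\|\Delta\lambda\|$.
\item Hence for $\omega\in U$ with $\|\omega-\omega^\star\|<\rho_r/(C_\Psi\kappa)$, part (a) gives $\|\lambda^\star(\omega)-\lambda^\star\|<\rho_r$.
\item The segment from $\lambda^\star$ to $\lambda^\star(\omega)$ lies in $\mathbb{R}^m_+$ within distance $\rho_r$ of $\lambda^\star$. Suppose it left $\mathcal{R}_r^\circ$. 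Then the point at the supremum of the parameters up to which it stays inside would lie in $\partial\mathcal{R}_r^\circ$ at distance less than $\rho_r$, a contradiction.
\end{itemize}

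No ball $B$ and no $\eta_0$ enter. The only restriction is $\omega\in U$, which is needed anyway for $\lambda^\star(\omega)$ to be defined. So in the probabilistic part you should require $B(\omega^\star,\varepsilon)\subseteq U$ (or $\omega\in U$ almost surely); after that your Step~2 goes through unchanged. In the example above this $\rho_r$ is $O(\epsilon)$, consistent with the label flip.
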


The corollary makes the taxonomy robust to small uncertainties in the problem data (for example, channel-estimation error in a wireless application): provided the nominal solution lies in the interior of a regime, the regime label is preserved.

\subsection{Characterization of Regime Transitions}
\label{sec:structural:transitions}

We now turn to the global geometry of $\Psi$. 
The Hybrid regime $\mathcal{R}_{\mathrm{hyb}}$ separates the four core regimes by precisely those algebraic conditions that define their boundaries. We show that this separating set is, generically, a union of smooth hypersurfaces of codimension one in parameter space, and that along generic paths in parameter space the regime sequence undergoes only finitely many isolated transitions.

\begin{definition}[Transition variety]
\label{def:transition}
Let $\omega \mapsto \lambda^\star(\omega)$ be a $C^1$ KKT-solution map defined on an open connected $\Omega \subseteq \mathbb{R}^d$ via Theorem~\ref{thm:stability}(a) (so Assumption~\ref{ass:LICQSOSC} together with strict complementarity holds throughout $\Omega$). 
The \emph{transition variety} is
\begin{equation}
\label{eq:transition-set}
\mathcal{T} := \bigl\{ \omega \in \Omega : \lambda^\star(\omega) \in \mathcal{R}_{\mathrm{hyb}} \bigr\}.
\end{equation}
\end{definition}

\begin{theorem}[Transition characterization]
\label{thm:transition}
Let $\omega \mapsto \lambda^\star(\omega)$ be as in Definition~\ref{def:transition}. 
Define the four \emph{boundary functions}
\begin{align*}
\beta_L(\omega) &:= L(\lambda^\star(\omega)) - \delta, \\
\beta_\Sigma(\omega) &:= \Sigma_k(\lambda^\star(\omega)) - \theta, \\
\beta_\sigma(\omega) &:= \sigma_\delta(\lambda^\star(\omega)) - \gamma, \\
\beta_\tau(\omega) &:= \tau(\lambda^\star(\omega)) - \delta.
\end{align*}
Then:
\begin{enumerate}
\item[(a)] \emph{(Algebraic structure.)} 
$\mathcal{T} = \{\omega : \beta_L(\omega) = 0\} \cup \{\omega : \beta_\Sigma(\omega) = 0\} \cup \{\omega : \beta_\sigma(\omega) = 0\} \cup \{\omega : \beta_\tau(\omega) = 0\}$.
\item[(b)] \emph{(Smoothness almost everywhere.)} The functions $\beta_L$ and $\beta_\Sigma$ are $C^1$ on $\Omega$. 
The function $\beta_\tau$ is $C^1$ on the open subset where $\bar\lambda_{(1)} > \bar\lambda_{(2)} > \bar\lambda_{(3)}$. The function $\beta_\sigma$ is piecewise constant; the set $\{\beta_\sigma = 0\}$ is the union of the level sets $\{\bar\lambda_i = \delta\}$ for indices $i$ that change the count.
\item[(c)] \emph{(Generic codimension one.)} 
For Lebesgue-almost every choice of parameters $\Theta = (\delta,\theta,\gamma,k)$ in \eqref{eq:params}, each of the level sets $\{\beta_L = 0\}, \{\beta_\Sigma = 0\}, \{\beta_\tau = 0\}, \{\bar\lambda_i = \delta\}$ is either empty or a $C^1$ embedded hypersurface in $\Omega$ of codimension $1$. 
Consequently $\mathcal{T}$ has Lebesgue measure zero in $\Omega$.
\item[(d)] \emph{(Isolated transitions on transverse paths.)} Let $\gamma : [0,1] \to \Omega$ be a $C^1$ curve transverse to each of the hypersurfaces in (c).
Then $\gamma^{-1}(\mathcal{T})$ is a finite set, and the regime label $\Psi(\lambda^\star(\gamma(t)))$ is piecewise constant in $t$ with finitely many discontinuities, each of which occurs at a single boundary crossing.
\end{enumerate}
\end{theorem}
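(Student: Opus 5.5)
The plan is to reduce everything to the behaviour of finitely many real-valued functions of $\omega$, each obtained by composing the $C^1$ solution map $\omega\mapsto\lambda^\star(\omega)$ of Theorem~\ref{thm:stability}(a) with a feature of Definition~\ref{def:features}. Three facts are used throughout. Strict complementarity holds on all of $\Omega$, so the active set is locally constant and $\lambda^\star_i(\omega)>0$ exactly on it. Hence $L(\lambda^\star(\omega))$ is a finite sum of $C^1$ functions. Wherever $L>0$, each $\bar\lambda_i(\omega)=\lambda^\star_i(\omega)/L(\lambda^\star(\omega))$ is $C^1$.

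For part (a) I will argue by contraposition. Take $\omega$ with all four $\beta$'s nonzero, where $\beta_\sigma\neq 0$ is read as ``no $\bar\lambda_i$ equals $\delta$''. Walk down the decision tree of Definition~\ref{def:core-regimes}: the sign of $\beta_L$, then the sign of $\beta_\Sigma$, then the value of $\sigma_\delta$, then the sign of $\beta_\tau$. Each step places $\lambda^\star(\omega)$ in a core regime, so $\omega\notin\mathcal T$. For the reverse inclusion, a point on one of the level sets fails some defining strict inequality, so it lies in $\mathcal R_{\mathrm{hyb}}$.

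I expect this step to be the main obstacle, because the decision tree leaves one branch uncovered: $L>\delta$, $\Sigma_k<\theta$, $\sigma_\delta<\gamma$ and $\tau<\delta$. That is an open set lying in $\mathcal R_{\mathrm{hyb}}$ with no equality holding, and nothing in \eqref{eq:params} visibly excludes it. My first attempt will be to show it is empty under \eqref{eq:params}. If $\tau<\delta$, every entry other than $\bar\lambda_{(1)}$ is below $\delta$. Then $\Sigma_k<\theta$ forces $\bar\lambda_{(1)}<\theta$, so at least $(1-\theta)/\delta$ small entries are needed to carry the remaining mass. This is possible once $m>(1-\theta)/\delta+1$, so the set need not be empty. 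If the set survives, I will state (a) as the inclusion $\supseteq$ together with a separate description of this open ``gap'' component, and carry the gap through (c)--(d) as an extra open piece.

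For part (b), $\beta_L$ is $C^1$ by the remarks above. $\beta_\Sigma$ is $C^1$ because $\Sigma_k$ is the sum of the $k$ largest entries, which is a smooth function away from ties at positions $k,k{+}1$. At such a tie the sum is still the same (Lipschitz) expression; if needed, I will restrict the $C^1$ claim to the open dense set where these two entries are distinct. $\beta_\tau$ is $C^1$ wherever the second order statistic is a single, locally fixed index, which is the stated set $\bar\lambda_{(1)}>\bar\lambda_{(2)}>\bar\lambda_{(3)}$. Finally, $\sigma_\delta$ changes value only when some $\bar\lambda_i$ crosses $\delta$, which gives the stated description of $\{\beta_\sigma=0\}$.

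For part (c) I will use Sard's theorem in the parameters. Each of the finitely many scalar functions $L(\lambda^\star(\cdot))$, $\Sigma_k(\lambda^\star(\cdot))$, $\tau(\lambda^\star(\cdot))$ and $\bar\lambda_i(\cdot)$ has a set of critical values of measure zero; this requires $C^d$ regularity, which follows if the data are taken $C^{d+1}$, or alternatively a Morse--Sard variant can be used. Hence for a.e.\ $\delta$ and a.e.\ $\theta$ the thresholds are regular values. The regular-value theorem then gives embedded $C^1$ hypersurfaces, or the empty set. A finite union of such hypersurfaces is null, which proves the measure-zero claim, modulo the gap set from (a).

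For part (d), transversality means each composite $\beta\circ\gamma$ has nonzero derivative at each of its zeros. So its zeros on the compact interval $[0,1]$ are isolated, hence finite. Between consecutive zeros every sign, and every $\sigma_\delta$ value, is constant, so the decision tree returns the same label. Each discontinuity therefore occurs at a single crossing, generically only one hypersurface at a time; simultaneous crossings are excluded by transversality to the pairwise intersections.
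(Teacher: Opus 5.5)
Your diagnosis of part (a) is correct, and it is sharper than the paper's own proof. The paper settles (a) with ``a careful case analysis confirms'' and never examines the branch $\tau<\delta$. The set $G:=\{L>\delta,\ \Sigma_k<\theta,\ \sigma_\delta<\gamma,\ \tau<\delta\}$ lies in $\mathcal{R}_{\mathrm{hyb}}$; it is the branch on which Algorithm~\ref{alg:classify} falls through to its final return of $\mathrm{hyb}$. None of the four equalities holds on $G$, and $G$ is open and nonempty as soon as $m>\max\{1/\delta,\,k/\theta\}$. The uniform vector is a simpler witness than your mass-counting estimate. Concretely, take the default $\Theta$, $m=25$, and the problem $\min_x \tfrac12\|x\|_2^2$ s.t.\ $\omega_i-x_i\le 0$ ($i=1,\ldots,25$). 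It satisfies LICQ, SOSC and strict complementarity on $\Omega=(0,\infty)^{25}$ with $\lambda^\star(\omega)=\omega$, so $\mathcal{T}$ contains a neighborhood of $\omega=(1,\ldots,1)$. Three claims therefore fail: the inclusion $\mathcal{T}\subseteq\bigcup\{\beta=0\}$ in (a); the null-set claim in (c), which fails for an open set of $\Theta$, so ``a.e.\ $\Theta$'' does not help; and the finiteness of $\gamma^{-1}(\mathcal{T})$ in (d). The statement as written cannot be proved, and restating it is the right move.

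The fallback you propose to keep, $\bigcup\{\beta=0\}\subseteq\mathcal{T}$, is also false, and your one-line argument for it is where the error lies. Failing one defining inequality of one regime does not place a point in $\mathcal{R}_{\mathrm{hyb}}$; a hybrid point must lie outside all four core regimes. The level sets of $\beta_\Sigma$, $\beta_\tau$ and $\bar\lambda_i-\delta$ cut through regimes whose definitions do not involve those features. For example, any $\lambda$ with $L(\lambda)<\delta$ and $\Sigma_k(\lambda)=\theta$ lies in $\mathcal{R}^{\circ}_{\mathrm{unc}}$. Likewise $\bar\lambda=(0.9,0.05,0.05)$ with $k=2$ and $L>\delta$ lies in $\mathcal{R}^{\circ}_{\mathrm{res}}$, although $\tau=\bar\lambda_2=\delta$. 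Read $\beta_\sigma$ literally, so that $\{\beta_\sigma=0\}=\{\sigma_\delta=\gamma\}$; this set is empty unless $\gamma m\in\mathbb{Z}$ and is not the union of the sets $\{\bar\lambda_i=\delta\}$. With that reading, the decision tree yields the nested description
\begin{align*}
\mathcal{T} ={}& \{\beta_L=0\}\cup\bigl(\{\beta_L>0\}\cap\{\beta_\Sigma=0\}\bigr)\\
&\cup\bigl(\{\beta_L>0,\ \beta_\Sigma<0\}\cap\{\beta_\sigma=0\}\bigr)\\
&\cup\bigl(\{\beta_L>0,\ \beta_\Sigma<0,\ \beta_\sigma<0\}\cap\{\beta_\tau\le 0\}\bigr).
\end{align*}
Corrected versions of (c)--(d) must work with these sign-restricted pieces. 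They must also either exclude the open piece $\{\beta_L>0,\ \beta_\Sigma<0,\ \beta_\sigma<0,\ \beta_\tau<0\}$ by hypothesis or list it as a separate open region. Your decision-tree argument for (d) still shows that the label is piecewise constant with finitely many jumps. However, $\gamma^{-1}(\mathcal{T})$ contains an interval whenever the path enters that open piece.

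Finally, Sard cannot be applied directly to $\Sigma_k\circ\lambda^\star$ and $\tau\circ\lambda^\star$, which are only Lipschitz at ties. Apply it instead to the smooth functions $\sum_{i\in S}\bar\lambda_i$ ($|S|=k$) and $\bar\lambda_i$, whose level sets cover those of $\Sigma_k$ and $\tau$. This gives null level sets. But $\{\tau=\delta\}$ can have corners along $\{\bar\lambda_{(2)}=\bar\lambda_{(3)}=\delta\}$ for a positive-measure set of $\delta$. So ``$C^1$ embedded hypersurface'' should be weakened to ``contained in a finite union of $C^1$ hypersurfaces''. For the null-set claim alone you need neither Sard nor $C^{d+1}$ data, since a continuous function has at most countably many level sets of positive measure. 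Your added regularity and pairwise-transversality hypotheses are legitimate repairs; the paper applies Sard to merely $C^1$ functions, which is insufficient for $d\ge 2$. They are, however, additions to the statement.
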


\begin{proof}
\textit{(a)} By Definition~\ref{def:hybrid}, $\lambda \in \mathcal{R}_{\mathrm{hyb}}$ iff $\lambda \notin \Lambda^{\circ}$, i.e., iff for every core regime $r$ at least one strict inequality fails. Equivalently, at least one of $L = \delta$, $\Sigma_k = \theta$, $\sigma_\delta = \gamma$, $\tau = \delta$ holds (with the understanding that the relevant equality is the one that prevents membership in any of the four open regimes). 
A careful case analysis confirms that $\mathcal{R}_{\mathrm{hyb}}$ equals the preimage of these four equalities under the feature map.
The claim follows.

\textit{(b)} $L$ and $\Sigma_k$ are differentiable functions of $\lambda$ on $\{L > 0\}$ (with $\Sigma_k$ differentiable wherever the top-$k$ index set is uniquely defined, i.e., $\bar\lambda_{(k)} > \bar\lambda_{(k+1)}$; this holds on an open dense subset).
Composition with the $C^1$ map $\omega \mapsto \lambda^\star(\omega)$ from Theorem~\ref{thm:stability}(a) yields $C^1$ regularity for $\beta_L$ and $\beta_\Sigma$ on $\Omega$ (in the case of $\beta_\Sigma$, away from the ties just described).
The function $\tau(\lambda) = \bar\lambda_{(2)}$ is $C^1$ wherever the top three order statistics are strictly separated, by an analogous argument. For $\beta_\sigma$: $\sigma_\delta$ is piecewise constant in $\lambda$, with jumps exactly at the level sets $\bar\lambda_i = \delta$. 
Pulling back through $\lambda^\star(\omega)$ gives the description in the statement.

\textit{(c)} For each fixed boundary function $\beta \in \{\beta_L, \beta_\Sigma, \beta_\tau\}$, the level set $\{\beta = 0\}$ is the preimage of $\{0\}$ under a $C^1$ scalar map on $\Omega$. 
By Sard's theorem \cite{sard1942measure}, $0$ is a regular value for $\beta$ for Lebesgue-almost every choice of the corresponding threshold ($\delta$, $\theta$, $\delta$, respectively), and at every regular value the preimage is a $C^1$ embedded hypersurface of codimension $1$ (when nonempty). 
Applying Sard separately to the threshold $\delta$ in $\bar\lambda_i = \delta$ for each fixed $i$ gives the same conclusion for the components of $\{\beta_\sigma = 0\}$. 
The union of finitely many codimension-$1$ subsets has Lebesgue measure zero in $\Omega$, so $\mathcal{T}$ is null.

\textit{(d)} If $\gamma$ is transverse to each hypersurface in (c), then $\gamma^{-1}$ of each is a $C^1$ submanifold of $[0,1]$ of codimension $1$, i.e., a discrete subset. Compactness of $[0,1]$ then makes it finite. Between consecutive crossings, $\lambda^\star(\gamma(t))$ remains in the interior of a single regime, so by definition $\Psi$ is constant on each open subinterval.
\end{proof}

\begin{remark}[Implication for trajectories]
\label{rem:phases}
Theorem~\ref{thm:transition}(d) provides the rigorous underpinning for the "phase-like" convergence behavior reported empirically in the literature and visible in our numerical experiments (Section~\ref{sec:numerical}): the multiplier trajectory of a converging algorithm, viewed as a curve in parameter or iterate space, decomposes into finitely many regime-stationary arcs separated by isolated transitions, with the qualitative algorithmic behavior changing only at these transitions.
\end{remark}

\subsection{The Hybrid Regime as Topological Boundary}
\label{sec:structural:hybrid}

We close the structural development by establishing the precise role of the Hybrid regime: it is, simultaneously, the topological boundary of the union of core regimes, a Lebesgue null set, and an accumulation set of multiple core regimes.

\begin{theorem}[Hybrid as topological boundary]
\label{thm:hybrid}
Let $\Lambda^{\circ} = \mathcal{R}_{\mathrm{unc}}^{\circ} \cup \mathcal{R}_{\mathrm{res}}^{\circ} \cup \mathcal{R}_{\mathrm{sat}}^{\circ} \cup \mathcal{R}_{\mathrm{coup}}^{\circ}$. 
Then
\begin{enumerate}
\item[(a)] $\Lambda^{\circ}$ is an open subset of $\mathbb{R}^{m}_{+}$ with closure $\overline{\Lambda^{\circ}} = \mathbb{R}^{m}_{+}$, except possibly on a Lebesgue null set; more precisely, $\overline{\Lambda^{\circ}}$ has full Lebesgue measure in $\mathbb{R}^{m}_{+}$.
\item[(b)] $\mathcal{R}_{\mathrm{hyb}} = \partial_{\mathbb{R}^{m}_{+}} \Lambda^{\circ}$, the topological boundary of $\Lambda^{\circ}$ relative to $\mathbb{R}^{m}_{+}$.
\item[(c)] $\mathcal{R}_{\mathrm{hyb}}$ has Lebesgue measure zero in $\mathbb{R}^{m}_{+}$.
\item[(d)] (Multiple accumulation.) Every $\lambda \in \mathcal{R}_{\mathrm{hyb}}$ lies in the closure of at least two distinct core regimes; that is, there exist $r_1 \ne r_2$ in $\{\mathrm{unc},\mathrm{res},\mathrm{sat},\mathrm{coup}\}$ and sequences $\lambda^{(n)}_1 \to \lambda$, $\lambda^{(n)}_2 \to \lambda$ with $\lambda^{(n)}_1 \in \mathcal{R}_{r_1}^{\circ}$ and $\lambda^{(n)}_2 \in \mathcal{R}_{r_2}^{\circ}$ for all $n$.
\end{enumerate}
\end{theorem}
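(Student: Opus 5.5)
The plan is to reduce all four parts to one \emph{covering lemma}. The lemma should say that every $\lambda\in\mathcal{R}_{\mathrm{hyb}}$ lies in the feature-level set
\[
Z:=\{L=\delta\}\cup\{L>\delta,\ \Sigma_k=\theta\}\cup\textstyle\bigcup_i\{L>\delta,\ \bar\lambda_i=\delta\}\cup\{L>\delta,\ \tau=\delta\},
\]
and that every point of $Z$ is a limit of points of $\Lambda^{\circ}$. Openness of $\Lambda^{\circ}$ is already given by Lemma~\ref{lem:disjoint}.

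\emph{(c) from the lemma.} Each piece of $Z$ is Lebesgue-null in $\mathbb{R}^m_+$:
\begin{itemize}
\item $\{L=\delta\}$ and $\{\bar\lambda_i=\delta\}=\{\lambda_i=\delta L(\lambda)\}$ are affine hyperplanes;
\item $\{\Sigma_k=\theta\}$ is a finite union, over index sets $S$ with $|S|=k$, of the hyperplanes $\{\sum_{i\in S}\lambda_i=\theta L\}$;
\item $\{\tau=\delta\}$ lies in $\bigcup_i\{\lambda_i=\delta L\}$.
\end{itemize}
This argument is cleaner than invoking Sard as in Theorem~\ref{thm:transition}(c), since every boundary piece is piecewise linear in $\lambda$.

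\emph{(b) and (a) from the lemma.} Because $\Lambda^{\circ}$ is open, its relative boundary is $\overline{\Lambda^{\circ}}\setminus\Lambda^{\circ}$. Hence (b) is equivalent to $\mathcal{R}_{\mathrm{hyb}}\subseteq\overline{\Lambda^{\circ}}$, i.e.\ to density of $\Lambda^{\circ}$, and (a) follows at once.

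\emph{Approximation half of the lemma.} Given $\lambda\in Z$, I perturb $\lambda$ along an explicit direction that makes every equality strict in a chosen direction:
\begin{itemize}
\item scale by $(1\pm\epsilon)$ to move $L$ off $\delta$, using Theorem~\ref{thm:invariance}(ii) to leave $\bar\lambda$ fixed;
\item transfer mass $\epsilon$ between a top-$k$ entry and a lower entry to move $\Sigma_k$ off $\theta$;
\item nudge individual entries to move $\bar\lambda_i$ off $\delta$.
\end{itemize}
Doing this with both signs is also the route to (d). For example, on $\{\Sigma_k=\theta\}$ the two transfer directions land in $\mathcal{R}_{\mathrm{res}}^{\circ}$ and in $\mathcal{R}_{\mathrm{sat}}^{\circ}\cup\mathcal{R}_{\mathrm{coup}}^{\circ}$ respectively. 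On $\{L=\delta\}$, scaling down gives $\mathcal{R}_{\mathrm{unc}}^{\circ}$ and scaling up gives the core regime determined by the shape features. Simultaneous equalities are handled by composing perturbations of decreasing sizes $\epsilon\gg\epsilon^2\gg\cdots$, so that earlier strict inequalities are not undone. Here I will also assume $\gamma m\notin\mathbb{Z}$ (generic $\Theta$), so that $\sigma_\delta=\gamma$ never occurs.

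\emph{Main obstacle: the inclusion $\mathcal{R}_{\mathrm{hyb}}\subseteq Z$.} Consider the open set
\[
G:=\{L>\delta,\ \Sigma_k<\theta,\ \sigma_\delta<\gamma,\ \tau<\delta\}.
\]
It satisfies all but the last defining inequality of $\mathcal{R}_{\mathrm{coup}}^{\circ}$ strictly, so it lies in $\mathcal{R}_{\mathrm{hyb}}$ without lying in $Z$. On $G$ at most one entry of $\bar\lambda$ exceeds $\delta$, and $\bar\lambda_{(1)}\le\Sigma_k<\theta$. Hence
\[
1=\sum_i\bar\lambda_i<\theta+(m-1)\delta,
\]
so $G=\emptyset$ exactly when $(m-1)\delta\le 1-\theta$. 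Otherwise $G$ is a nonempty open subset of $\mathcal{R}_{\mathrm{hyb}}$: take $\bar\lambda_{(1)}$ just below $\theta$ with the rest spread evenly below $\delta$.

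I therefore expect the theorem to hold as stated only under the side condition $(m-1)\delta\le1-\theta$ (with $\gamma m\notin\mathbb{Z}$). Under the default $\Theta=(0.05,0.7,0.4,2)$ this already fails for $m\ge8$. My first attempt will be to verify the case analysis of Theorem~\ref{thm:transition}(a) under that side condition and prove (a)--(d) there. Failing that, I would propose amending Definition~\ref{def:core-regimes}, for instance by replacing $\tau>\delta$ with $\tau\ne\delta$ or by folding $G$ into $\mathcal{R}_{\mathrm{res}}^{\circ}$, so that $Z$ is genuinely the complement of $\Lambda^{\circ}$.
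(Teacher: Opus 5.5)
Your obstruction is correct, and it sits exactly where the paper's own proof breaks. The paper obtains (a) and (c) from the claim, taken from Theorem~\ref{thm:transition}, that $\mathcal{R}_{\mathrm{hyb}}$ is covered by the four level sets. Its Case~4 in (b) handles only $\tau(\lambda)=\delta$, never $\tau(\lambda)<\delta$ together with $L>\delta$, $\Sigma_k<\theta$, $\sigma_\delta<\gamma$. That omitted case is your set $G$, and Algorithm~\ref{alg:classify} itself returns $\mathrm{hyb}$ on it in its last line. With the default $\Theta$ and $m=9$, the point $\lambda=(0.62,0.0475,\dots,0.0475)$ has $L=1$, $\Sigma_2=0.6675$, $\sigma_\delta=1/9$, $\tau=0.0475$, and a whole neighbourhood of Hybrid points. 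So the density claim in (a) and all of (b)--(d) fail.

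Your assumption $\gamma m\notin\mathbb{Z}$ is also a necessary hypothesis, not a convenience. The paper's Case~3 only treats $\sigma_\delta=\gamma$ with an entry sitting exactly at $\delta$. For $\gamma m\in\mathbb{Z}$ the equality holds on open sets: for $m=10$, four entries $0.2$ and six entries $1/30$ give $\Sigma_2=0.4$ and $\sigma_\delta=0.4=\gamma$, and every nearby point is Hybrid. So the statement as written cannot be proved, and a conditional version is the right target. Your hyperplane argument for (c) is also sounder than the paper's Sard argument, which only covers almost every threshold.

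Some of your own claims need correcting. The sharp condition is $(m-k)\delta\le1-\theta$, not $(m-1)\delta\le1-\theta$. On $G$ the entries $\bar\lambda_{(k+1)},\dots,\bar\lambda_{(m)}$ are each at most $\tau<\delta$, so $1<\theta+(m-k)\delta$. Your witness with $\bar\lambda_{(1)}$ just below $\theta$ fails for $k\ge2$, because $\Sigma_k$ also counts $\bar\lambda_{(2)},\dots,\bar\lambda_{(k)}$. With the defaults, $G=\emptyset$ at $m=8$, so the failure begins at $m=9$, not $m=8$.

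The same tail estimate gives a simplification. Under $(m-k)\delta\le1-\theta$ and $\gamma m\notin\mathbb{Z}$, every $\lambda$ with $L>\delta$ and $\Sigma_k<\theta$ lies in $\mathcal{R}_{\mathrm{sat}}^{\circ}\cup\mathcal{R}_{\mathrm{coup}}^{\circ}$. Hence $\mathcal{R}_{\mathrm{hyb}}\subseteq\{L=\delta\}\cup\{L>\delta,\ \Sigma_k=\theta\}$, and the $\sigma$- and $\tau$-pieces of $Z$, with their $\epsilon\gg\epsilon^2$ bookkeeping, drop out.

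Lemma~\ref{lem:disjoint} does not actually give openness of $\mathcal{R}_{\mathrm{coup}}^{\circ}$. The set $\{\sigma_\delta<\gamma\}$ is not open: raising an entry that sits exactly at $\delta$ can push a point of $\mathcal{R}_{\mathrm{coup}}^{\circ}$ into $\mathcal{R}_{\mathrm{sat}}^{\circ}$. What you need is openness of $\Lambda^{\circ}$, and that does hold when $\gamma m\notin\mathbb{Z}$. The reason is that the open set $\{L>\delta,\ \Sigma_k<\theta,\ \tau>\delta\}$ contains $\mathcal{R}_{\mathrm{coup}}^{\circ}$ and lies inside $\mathcal{R}_{\mathrm{sat}}^{\circ}\cup\mathcal{R}_{\mathrm{coup}}^{\circ}$. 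For $\gamma m\in\mathbb{Z}$ this fails too.

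For (d), a pairwise mass transfer need not lower $\Sigma_k$ when $\bar\lambda_{(k)}=\bar\lambda_{(k+1)}$. Use instead $\lambda\mapsto(1-t)\lambda+t\,\tfrac{L(\lambda)}{m}(1,\dots,1)$. This fixes $L$ and, by convexity of the top-$k$ sum, strictly lowers $\Sigma_k$ whenever $\Sigma_k>k/m$.

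This exposes one more exclusion. If $\theta=k/m$, which is possible once $k>m/2$, the points $c(1,\dots,1)$ with $cm>\delta$ are Hybrid but adjacent only to $\mathcal{R}_{\mathrm{res}}^{\circ}$. Under $(m-k)\delta\le1-\theta$, $\gamma m\notin\mathbb{Z}$ and $\theta\ne k/m$, your argument proves (a)--(d).

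Finally, your proposed amendment of replacing $\tau>\delta$ by $\tau\ne\delta$ would restore (a)--(c) but not (d). With the defaults and $m=9$, the point $(0.62,\,0.05,\,\tfrac{0.33}{7},\dots,\tfrac{0.33}{7})$ becomes Hybrid with only $\mathcal{R}_{\mathrm{coup}}^{\circ}$ nearby.
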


\begin{proof}
\textit{(a)} Openness of $\Lambda^{\circ}$ follows from Lemma~\ref{lem:disjoint}, as the union of open sets is open. 
To see that $\overline{\Lambda^{\circ}}$ has full measure, note that the complement $\mathcal{R}_{\mathrm{hyb}} = \mathbb{R}^{m}_{+}\setminus \Lambda^{\circ}$ is contained in the union of the four level sets in Theorem~\ref{thm:transition}(c) (applied with $\omega = \lambda$ and the identity solution map), each of which is at most a codimension-$1$ subset of $\mathbb{R}^{m}_{+}$. 
The union of finitely many codimension-$1$ subsets is Lebesgue null.

\textit{(b)} We show $\mathcal{R}_{\mathrm{hyb}} = \overline{\Lambda^{\circ}} \setminus\Lambda^{\circ} = \partial_{\mathbb{R}^{m}_{+}}\Lambda^{\circ}$.

\noindent ($\subseteq$). Let $\lambda \in \mathcal{R}_{\mathrm{hyb}}$. Then $\lambda \notin \Lambda^{\circ}$ by definition. To show $\lambda \in \overline{\Lambda^{\circ}}$, we exhibit a sequence in $\Lambda^{\circ}$ converging to $\lambda$. 
We treat the four boundary cases (corresponding to which strict inequality is tight at $\lambda$) separately.

\emph{Case 1: $L(\lambda) = \delta$.} For each $n$, set $\lambda^{(n)}_1 := (1 - 1/n)\,\lambda$ if $\lambda \ne 0$, otherwise the case is vacuous. 
Then $L(\lambda^{(n)}_1) = (1-1/n)\delta < \delta$, so $\lambda^{(n)}_1 \in \mathcal{R}_{\mathrm{unc}}^{\circ}$, and $\lambda^{(n)}_1 \to \lambda$.

\emph{Case 2: $L(\lambda) > \delta$ and $\Sigma_k(\lambda) = \theta$.} For each $n$, perturb $\lambda$ by adding a small mass $\epsilon_n \to 0^+$ to its largest entry: $\lambda^{(n)}_1 := \lambda + \epsilon_n e_{(1)}$, where $e_{(1)}$ is the unit vector at the index of $\bar\lambda_{(1)}$.
Then $\Sigma_k(\lambda^{(n)}_1) > \theta$ for $\epsilon_n$ sufficiently small (since adding mass to the top entry strictly increases $\Sigma_k$ relative to its previous value, by continuity), placing $\lambda^{(n)}_1 \in \mathcal{R}_{\mathrm{res}}^{\circ}$.

\emph{Case 3: $L(\lambda) > \delta$, $\Sigma_k(\lambda) \le \theta$, $\sigma_\delta(\lambda) = \gamma$.} Either (i) the count $\sigma_\delta(\lambda)\cdot m$ is achieved with one entry exactly at $\delta$, in which case shifting that entry to $\delta + \epsilon_n$ or $\delta - \epsilon_n$ produces $\lambda^{(n)}_1$ with $\sigma_\delta > \gamma$ or $< \gamma$, respectively; or (ii) $\gamma m$ is not an integer, in which case $\beta_\sigma$ cannot equal zero generically; this case does not arise.

\emph{Case 4: All previous strict inequalities hold but $\tau(\lambda) = \delta$.} Perturb the second-largest entry of $\lambda$ upward by $\epsilon_n \to 0^+$ to obtain $\lambda^{(n)}_1$ with $\tau > \delta$, placing $\lambda^{(n)}_1 \in \mathcal{R}_{\mathrm{coup}}^{\circ}$.

In every case $\lambda^{(n)}_1 \to \lambda$ with $\lambda^{(n)}_1 \in \Lambda^{\circ}$, so $\lambda \in \overline{\Lambda^{\circ}}$.

\noindent ($\supseteq$). Conversely, suppose $\lambda \in \overline{\Lambda^{\circ}}\setminus \Lambda^{\circ}$. 
Then $\lambda \notin \mathcal{R}_r^{\circ}$ for any $r$, so $\lambda \in \mathcal{R}_{\mathrm{hyb}}$.

\textit{(c)} Already shown in part (a).

\textit{(d)} The constructive sequences in the proof of (b) show $\lambda$ is the limit of points in some core regime $r_1$; choosing the sign of the perturbation in the opposite direction (subtract $\epsilon_n$ instead of adding, where the perturbation is sign-meaningful) produces a sequence in a \emph{different} core regime $r_2$ converging to $\lambda$. 
For example, in Case 2, $\lambda - \epsilon_n e_{(1)}$ has $\Sigma_k < \theta$ and falls into $\mathcal{R}_{\mathrm{sat}}^{\circ}$ or $\mathcal{R}_{\mathrm{coup}}^{\circ}$ depending on the remaining features. 
In each case, the two-sided perturbation around the tight inequality produces sequences in two distinct core regimes.
\end{proof}

\begin{corollary}[Geometric meaning of the Hybrid regime]
\label{cor:hybrid-meaning}
The Hybrid regime $\mathcal{R}_{\mathrm{hyb}}$ is, simultaneously, (i) the precise set of points where the classification by strict inequalities is degenerate, (ii) the topological frontier between core regimes, and (iii) a measure-zero structural object on which classifiers of $\Psi$ must be constructed with care.
\end{corollary}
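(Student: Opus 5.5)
The corollary only repackages Theorem~\ref{thm:hybrid} together with Definitions~\ref{def:hybrid} and~\ref{def:Psi}. I will therefore prove the three items (i)--(iii) separately, each by citing the matching part of those earlier results.

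\emph{Item (i).} By Definition~\ref{def:Psi}, $\Psi(\lambda)=\mathrm{hyb}$ exactly when $\lambda$ lies in none of the four open sets $\mathcal{R}_r^{\circ}$. By Definition~\ref{def:core-regimes}, this means that for each $r$ some defining strict inequality fails. I will invoke Theorem~\ref{thm:transition}(a), applied with the identity solution map $\omega=\lambda$ as in the proof of Theorem~\ref{thm:hybrid}(a). It identifies this failure set with the preimage of the equalities $L=\delta$, $\Sigma_k=\theta$, $\sigma_\delta=\gamma$, $\tau=\delta$ under the feature map. These are exactly the points where the strict-inequality classification is degenerate.

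\emph{Item (ii).} This is Theorem~\ref{thm:hybrid}(b): $\mathcal{R}_{\mathrm{hyb}}=\partial_{\mathbb{R}^m_+}\Lambda^{\circ}$. To justify the word ``between core regimes'', I will add Theorem~\ref{thm:hybrid}(d). Every Hybrid point is a common limit of two sequences lying in distinct core regimes, so it belongs to $\overline{\mathcal{R}_{r_1}^{\circ}}\cap\overline{\mathcal{R}_{r_2}^{\circ}}$ for some $r_1\neq r_2$. It is therefore a genuine interface rather than, say, an isolated hole inside a single regime.

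\emph{Item (iii).} Measure zero is Theorem~\ref{thm:hybrid}(c). The phrase ``constructed with care'' is interpretive. I will make it precise as follows: $\Psi$ is locally constant on the open set $\Lambda^{\circ}$ by Lemma~\ref{lem:disjoint}, and it is discontinuous at every point of $\mathcal{R}_{\mathrm{hyb}}$. The discontinuity follows from (d), since arbitrarily close points carry at least two different core labels. Consequently, any finite-precision classifier cannot be robust near $\mathcal{R}_{\mathrm{hyb}}$. It has to rely on the margin $\rho_r$ of \eqref{eq:feature-margin}, which tends to $0$ as $\lambda$ approaches $\mathcal{R}_{\mathrm{hyb}}$.

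The main obstacle is that the corollary inherits the gaps in its sources rather than introducing new difficulty. The identification in (i) relies on the ``careful case analysis'' in Theorem~\ref{thm:transition}(a), and the two-sided accumulation in (ii) relies on the perturbation cases of Theorem~\ref{thm:hybrid}(b),(d). In particular, Case~3 of the latter proof needs $\gamma m\notin\mathbb{Z}$, or a genericity assumption, for the level set $\sigma_\delta=\gamma$ to behave as claimed. I will state the corollary under the same standing parameter conditions as Theorem~\ref{thm:hybrid}, so that it follows directly from that theorem.
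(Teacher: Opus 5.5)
Your route is the paper's own: the corollary is stated without a separate proof and is read off Definition~\ref{def:hybrid} and Theorem~\ref{thm:hybrid}. This route cannot succeed, because the cited results are false, not merely loosely proved. The gap is also not the integrality issue you flag in Case~3.

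By Definition~\ref{def:hybrid}, $\mathcal{R}_{\mathrm{hyb}}$ contains all of $\mathcal{D}:=\{L>\delta,\ \Sigma_k<\theta,\ \sigma_\delta<\gamma,\ \tau<\delta\}$. Failing $\tau>\delta$ includes $\tau<\delta$, whereas Case~4 of the proof of Theorem~\ref{thm:hybrid}(b) and the ``careful case analysis'' of Theorem~\ref{thm:transition}(a) only treat $\tau=\delta$. Whenever $m>\max\{1/\delta,k/\theta\}$, $\mathcal{D}$ contains a neighbourhood of every $c\mathbf{1}$ with $cm>\delta$.

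Concretely, take the default $\Theta=(0.05,0.7,0.4,2)$ and $m=100$. Every $\lambda\in(1/2,3/2)^{100}$ has $L>50$, all $\bar\lambda_i<0.03$, $\Sigma_2<0.06$, $\sigma_\delta=0$ and $\tau<0.03$. So the whole cube lies in $\mathcal{R}_{\mathrm{hyb}}$, and Algorithm~\ref{alg:classify} reaches its final fall-through branch there. This has several consequences.
\begin{enumerate}
\item $\mathcal{R}_{\mathrm{hyb}}$ has positive measure, which refutes (iii) and Theorem~\ref{thm:hybrid}(a),(c).
\item $\mathbf{1}\notin\overline{\Lambda^{\circ}}$, so $\mathbf{1}$ is neither a boundary point of $\Lambda^{\circ}$ nor a limit of any core regime. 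This refutes (ii) and Theorem~\ref{thm:hybrid}(b),(d).
\item No feature equals its threshold at $\mathbf{1}$, so your reading of (i) via Theorem~\ref{thm:transition}(a) fails. Only the tautology ``no core label applies'' survives.
\item Your claim that $\Psi$ is discontinuous at every Hybrid point fails, since $\Psi\equiv\mathrm{hyb}$ on the cube.
\end{enumerate}
Neither the standing conditions \eqref{eq:params} nor genericity of $\gamma$ helps: $\mathcal{D}$ has interior for every admissible $\Theta$ once $m$ is large. When $\gamma m\in\mathbb{Z}$, the set $\{\sigma_\delta=\gamma\}$ likewise has interior, so Case~3 is another instance of the same defect. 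Assuming $\gamma m\notin\mathbb{Z}$ removes that piece but not $\mathcal{D}$.

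A further step of yours also fails: $\Psi$ need not be locally constant on $\Lambda^{\circ}$. The map $\sigma_\delta$ is only lower semicontinuous, so $\{\sigma_\delta<\gamma\}$, and with it $\mathcal{R}_{\mathrm{coup}}^{\circ}$, need not be open. Lemma~\ref{lem:disjoint} proves openness only for $\{\sigma_\delta>\gamma\}$. For the default $\Theta$ and $m=9$, $\lambda=(0.3,0.3,0.3,0.05,0.01,\dots,0.01)$ lies in $\mathcal{R}_{\mathrm{coup}}^{\circ}$, while $\lambda+\epsilon e_4\in\mathcal{R}_{\mathrm{sat}}^{\circ}$ for all small $\epsilon>0$. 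So part of the frontier between two core regimes lies inside a core regime rather than in $\mathcal{R}_{\mathrm{hyb}}$.

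Replacing $\mathcal{R}_{\mathrm{hyb}}$ by $\partial_{\mathbb{R}^m_+}\Lambda^{\circ}$ does not save the two-regime accumulation either. For $m=100$, the point $(\delta/m)\mathbf{1}$ lies on that boundary, but its only neighbouring core regime is $\mathcal{R}_{\mathrm{unc}}^{\circ}$, because its neighbours with $L>\delta$ lie in $\mathcal{D}$.

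The corollary as stated is therefore false. A correct version needs extra hypotheses or modified regime definitions; for instance, $(m-1)\delta\le 1-\theta$ forces $\mathcal{D}=\emptyset$. Theorem~\ref{thm:hybrid} would then have to be re-proved rather than cited.
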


\subsection{Discussion and Consequences}
\label{sec:structural:discussion}

The four theorems of this section together provide the mathematical foundation for using the regime label as a primitive in downstream optimization theory.

\emph{Theorem~\ref{thm:invariance}} legitimates the label as an intrinsic property of the KKT point, not of its representation. 
Corollary~\ref{cor:method-indep} in particular allows results stated for one algorithm to be transferred to another whenever both produce KKT points in the same regime.

\emph{Theorem~\ref{thm:stability}} establishes that the label is locally robust: small perturbations of the data, including the stochastic perturbations arising from estimation error or measurement noise in applications, do not change the regime classification on the interior of any core regime.
The explicit threshold \eqref{eq:eta-bound} ties the robustness margin to two quantities of independent interest: the conditioning $\kappa$ of the KKT system and the feature-space margin $\rho_r$ of the nominal solution. 
Both can be estimated empirically from the dual solution of a single solve.

\emph{Theorem~\ref{thm:transition}} converts the empirical observation of phase-like convergence into a precise geometric statement: transitions are codimension-one events along generic trajectories, and the trajectory of a converging algorithm visits only finitely many regimes. 
This is the structural result that licenses regime-aware algorithm design: a controller observing the multiplier trajectory may detect transitions reliably because they correspond to crossings of an algebraic surface.

\emph{Theorem~\ref{thm:hybrid}} replaces the informal description of the Hybrid regime as a "catch-all" with a precise topological characterization. 
In particular, Hybrid points are negligible in volume but pivotal in structure: any continuous path from one core regime to another must traverse $\mathcal{R}_{\mathrm{hyb}}$.

The classifier algorithm that operationalizes $\Psi$ as a computational primitive, together with its convergence and complexity analysis under noisy multiplier estimates, is the subject of Section~\ref{sec:classifier}.


\section{The Regime Classifier: Algorithm and Guarantees}
\label{sec:classifier}

The structural theorems of Section~\ref{sec:structural} establish the regime taxonomy as a well-posed object. 
To use the taxonomy as a primitive in downstream applications---in particular, the wireless-system papers that will build on this work---we require a computational procedure that, given an estimate $\hat\lambda$ of the true KKT multiplier vector $\lambda^\star$, returns the regime label $\Psi(\lambda^\star)$ with provable guarantees. 
This section presents such a procedure and analyzes its behavior under three sources of error that arise in practice: (i) approximate solver output for known problem data; (ii) finite-sample estimation of unknown problem data; and (iii) temporal drift in the problem data. 
Each setting yields a quantitative guarantee that downstream papers can invoke without re-derivation.

\subsection{The Classifier Algorithm}
\label{sec:classifier:algorithm}

Algorithm~\ref{alg:classify} implements the classification map $\Psi$ of Definition~\ref{def:Psi}. It accepts an arbitrary $\hat\lambda \in \mathbb{R}^m_+$, the parameter quadruple $\Theta = (\delta,\theta,\gamma,k)$, and an optional numerical tolerance $\varepsilon \ge 0$ that flags points within $\varepsilon$ of any defining inequality as Hybrid. The output is the regime label.

\begin{algorithm}[t]
\caption{Regime Classifier $\Psi$}
\label{alg:classify}
\begin{algorithmic}[1]
\Require $\hat\lambda \in \mathbb{R}^m_+$;\ \ parameters $\Theta = (\delta,\theta,\gamma,k)$;\ \ tolerance $\varepsilon \ge 0$
\Ensure regime label $r \in \{\mathrm{unc},\mathrm{res},\mathrm{sat},\mathrm{coup},\mathrm{hyb}\}$
\State $L \gets \sum_{i=1}^m \hat\lambda_i$ \Comment{total dual mass}
\If{$L < \delta - \varepsilon$} \Return $\mathrm{unc}$ \EndIf
\If{$L \le \delta + \varepsilon$} \Return $\mathrm{hyb}$ \EndIf
\State $\bar\lambda \gets \hat\lambda / L$ \Comment{normalize to simplex}
\State Compute top-$k$ values $\{\bar\lambda_{(1)},\ldots,\bar\lambda_{(k)}\}$ by partial selection
\State $\Sigma_k \gets \sum_{j=1}^k \bar\lambda_{(j)}$
\If{$|\Sigma_k - \theta| \le \varepsilon$} \Return $\mathrm{hyb}$ \EndIf
\If{$\Sigma_k > \theta$} \Return $\mathrm{res}$ \EndIf
\State $\sigma_\delta \gets \tfrac{1}{m}\bigl|\{i : \bar\lambda_i > \delta\}\bigr|$
\If{$\sigma_\delta > \gamma$} \Return $\mathrm{sat}$ \EndIf
\State $\tau \gets \bar\lambda_{(2)}$
\If{$|\tau - \delta| \le \varepsilon$} \Return $\mathrm{hyb}$ \EndIf
\If{$\tau > \delta$} \Return $\mathrm{coup}$ \EndIf
\State \Return $\mathrm{hyb}$
\end{algorithmic}
\end{algorithm}

\begin{proposition}[Per-call complexity]
\label{prop:complexity}
Algorithm~\ref{alg:classify} terminates in $\mathcal{O}(m)$ time and $\mathcal{O}(m)$ space using a linear-time partial-selection routine (e.g., introselect or median-of-medians) for the top-$k$ extraction in line~5. 
With a sort-based implementation the cost is $\mathcal{O}(m \log m)$.
\end{proposition}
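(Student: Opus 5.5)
The proof is a line-by-line cost accounting of Algorithm~\ref{alg:classify}. Each line costs either $\mathcal{O}(1)$ or $\mathcal{O}(m)$, and the total is their sum. The steps are as follows.
\begin{enumerate}
\item Line~1 computes $L$ with one pass of $m-1$ additions, so it costs $\mathcal{O}(m)$.
\item The threshold tests in lines~2--3, 7--8, 10 and 12--14 are scalar comparisons, each $\mathcal{O}(1)$.
\item Line~4 normalizes, using $m$ divisions and $\mathcal{O}(m)$ storage for $\bar\lambda$. Division is safe here, since $L>\delta+\varepsilon>0$ once lines~2--3 have been passed.
\item Line~9 counts $|\{i:\bar\lambda_i>\delta\}|$ in one pass, so it is $\mathcal{O}(m)$.
\item Line~11 only needs $\bar\lambda_{(2)}$. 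It is available from the top-$k$ output whenever $k\ge 2$. Otherwise a single scan tracking the two largest entries gives it in $\mathcal{O}(m)$.
\end{enumerate}

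The only nontrivial step is line~5, the top-$k$ extraction. I would invoke a deterministic linear-time selection routine (median-of-medians; introselect has $\mathcal{O}(m)$ expected and worst-case-guarded behaviour). It finds the $k$-th largest value $v$ in $\mathcal{O}(m)$. A second pass then collects the entries strictly greater than $v$ and pads with copies of $v$ until there are $k$ entries, which handles ties correctly. Summing them gives $\Sigma_k$ in $\mathcal{O}(m)$, because $k\le m-1$. The point to be careful about is that this bound holds uniformly in $k$, with no $\mathcal{O}(k\log k)$ sorting of the selected block. That is fine, since $\Sigma_k$ only needs the sum of the top-$k$ entries, not their order.

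For space, the algorithm stores the input, $\bar\lambda$, and the workspace of the selection routine. Median-of-medians can run in place on a copy, which is $\mathcal{O}(m)$. Because each line runs at most once (there are no loops beyond single passes) and the early returns only reduce cost, the total time is $\mathcal{O}(m)$.

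For the sort-based variant, replacing line~5 with a full comparison sort costs $\mathcal{O}(m\log m)$. All other lines stay $\mathcal{O}(m)$, so the total is $\mathcal{O}(m\log m)$.
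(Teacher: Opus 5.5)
Your proof is correct and follows the same route as the paper's: a line-by-line cost tally in which every line costs $\mathcal{O}(1)$ or $\mathcal{O}(m)$, linear-time selection makes the top-$k$ step $\mathcal{O}(m)$, and a full sort raises only that step to $\mathcal{O}(m\log m)$. Your additional details are sound refinements of steps the paper simply asserts: selecting the $k$-th largest value and padding ties so the bound holds uniformly in $k$, checking that $L>0$ before the division, and recovering $\bar\lambda_{(2)}$ for line~11.
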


\begin{proof}
Lines~1--4 are $\mathcal{O}(m)$. Line~5 extracts the top-$k$ entries of $\bar\lambda$; this is computable in $\mathcal{O}(m)$ time by partial selection \cite{blum1973time,musser1997introspective,cormen2022introduction}.
Lines~6--16 each take $\mathcal{O}(m)$ or $\mathcal{O}(1)$ work, including the count in line~10.
The total is $\mathcal{O}(m)$.
\end{proof}

For applications in which $m$ is moderate (tens to a few hundreds), as is typical in wireless resource-allocation problems, the per-call cost of Algorithm~\ref{alg:classify} is negligible compared to a single optimization step.

\subsection{Feature Margin and Correctness under Perturbation}
\label{sec:classifier:correctness}

We first quantify the robustness of $\Psi$ to perturbations of $\hat\lambda$ around the true multiplier $\lambda^\star$. 
The natural margin in this context lives in the feature space, where the regime-defining inequalities are posed.

\begin{definition}[Feature-space margin]
\label{def:feat-margin}
For $\lambda^\star\in\mathcal{R}_r^\circ$ with $r\in\{\mathrm{unc}, \mathrm{res}, \mathrm{sat}, \mathrm{coup}\}$, the \emph{feature-space margin} is the smallest slack among the strict inequalities defining $\mathcal{R}_r^\circ$:
\begin{align}
\rho_r(\lambda^\star) := \min\bigl\{ &\,|L(\lambda^\star) - \delta|,\ |\Sigma_k(\lambda^\star) - \theta|, \notag\\
& \quad |\sigma_\delta(\lambda^\star) - \gamma|,\ |\tau(\lambda^\star) - \delta|\,\bigr\}, \label{eq:rho-feat}
\end{align}
restricted to those inequalities that appear in the definition of $\mathcal{R}_r^\circ$ in \eqref{eq:Runc}--\eqref{eq:Rcoup}.
\end{definition}

The quantity $\rho_r(\lambda^\star)$ is strictly positive whenever $\lambda^\star$ lies in the interior of a core regime, and is directly computable from a single solve. 
It is the operational counterpart of the $\lambda$-space distance to the regime boundary used in Theorem~\ref{thm:stability}: the two notions differ by at most a factor of $C_\Psi$, the joint Lipschitz constant of the feature map $T:\lambda \mapsto (L,\Sigma_k,\sigma_\delta,\tau)$ established in the proof of Theorem~\ref{thm:stability}(b).

\begin{theorem}[Correctness under multiplier perturbation]
\label{thm:classifier-correctness}
Let $\lambda^\star \in \mathcal{R}_r^\circ$ for some core regime $r \in \{\mathrm{unc}, \mathrm{res}, \mathrm{sat}, \mathrm{coup}\}$, with feature-space margin $\rho_r(\lambda^\star) > 0$ as in Definition~\ref{def:feat-margin}.
Let $\hat\lambda \in \mathbb{R}^m_+$ be any estimate satisfying
\begin{equation}
\label{eq:perturbation-bound}
\|\hat\lambda - \lambda^\star\|_2 \,\le\, \eta < \frac{\rho_r(\lambda^\star)}{C_\Psi}.
\end{equation}
Then Algorithm~\ref{alg:classify} with tolerance $\varepsilon = 0$ returns
$\Psi(\hat\lambda) = r$.
\end{theorem}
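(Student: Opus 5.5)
The plan is to reduce the claim to a feature-space statement and then read off the control flow of Algorithm~\ref{alg:classify}. Write $T(\lambda) = (L(\lambda),\Sigma_k(\lambda),\sigma_\delta(\lambda),\tau(\lambda))$ for the feature map from the proof of Theorem~\ref{thm:stability}(b). By that proof, $T$ is Lipschitz with constant $C_\Psi$ on a closed ball $B$ around $\lambda^\star$. On $B$ we have $L \ge L(\lambda^\star)/2$, and no entry of $\bar\lambda$ crosses $\delta$, so $\sigma_\delta$ is constant there. I will first check that the hypothesis \eqref{eq:perturbation-bound} places $\hat\lambda$ in $B$. This holds if we shrink $\eta$ if necessary, i.e.\ we read $C_\Psi$ as the constant attached to a ball of radius at least $\rho_r/C_\Psi$. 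I will state this implicitly assumed compatibility explicitly. Norm equivalence lets $\ell_1$, $\ell_\infty$ and $\ell_2$ estimates be absorbed into $C_\Psi$.

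The key estimate is then coordinatewise. For each feature $\phi \in \{L,\Sigma_k,\tau\}$ we have $|\phi(\hat\lambda)-\phi(\lambda^\star)| \le C_\Psi\|\hat\lambda-\lambda^\star\|_2 \le C_\Psi \eta < \rho_r(\lambda^\star)$. By Definition~\ref{def:feat-margin}, every strict inequality defining $\mathcal{R}_r^\circ$ holds at $\lambda^\star$ with slack at least $\rho_r$. The perturbed feature moves by strictly less than that slack, so it stays on the same side of its threshold. For $\sigma_\delta$ I will use local constancy on $B$ rather than Lipschitz continuity, giving $\sigma_\delta(\hat\lambda)=\sigma_\delta(\lambda^\star)$. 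Hence $\hat\lambda \in \mathcal{R}_r^\circ$, and $\Psi(\hat\lambda)=r$ by Definition~\ref{def:Psi}.

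It remains to confirm that Algorithm~\ref{alg:classify} with $\varepsilon=0$ computes $\Psi$ on points of $\Lambda^\circ$. I will trace each regime in turn.
\begin{itemize}
\item[\textbf{unc:}] $L<\delta$ returns at line~2.
\item[\textbf{res:}] $L>\delta$ passes lines~2--3. Then $\Sigma_k>\theta$ makes $|\Sigma_k-\theta|\le 0$ fail and returns at line~9.
\item[\textbf{sat:}] The same checks pass, $\Sigma_k<\theta$ skips line~9, and $\sigma_\delta>\gamma$ returns at line~11.
\item[\textbf{coup:}] Additionally $\sigma_\delta<\gamma$ skips line~11, and $\tau>\delta$ returns at line~15.
\end{itemize}
The tolerance tests with $\varepsilon=0$ only trigger on exact equality, which is excluded by the strict inequalities. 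This step is routine.

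The main obstacle is the $\sigma_\delta$ component and the definition of $C_\Psi$. Since $\sigma_\delta$ is integer-valued, it is not globally Lipschitz. Its slack $|\sigma_\delta-\gamma|$ in $\rho_r$ is therefore meaningless as a distance budget; what actually matters is the gap $\min_i|\bar\lambda^\star_i-\delta|$. I will try first to argue that the ball $B$ used to define $C_\Psi$ already excludes any crossing of $\bar\lambda_i=\delta$, as stipulated in the proof of Theorem~\ref{thm:stability}(b). Then the radius $\rho_r/C_\Psi$ is within $B$ by construction. If that stipulation is too weak, the fallback is to note that $\bar\lambda$ is Lipschitz with constant about $2/L(\lambda^\star)$ on $B$. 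One then enlarges $C_\Psi$ so that $\rho_r/C_\Psi \le L(\lambda^\star)\min_i|\bar\lambda^\star_i-\delta|/2$, which guarantees constancy of $\sigma_\delta$.
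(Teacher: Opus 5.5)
Your proposal is correct and follows the paper's own proof: Lipschitz continuity of the feature map $T$ with constant $C_\Psi$ (taken from the proof of Theorem~\ref{thm:stability}(b)), a slack argument against $\rho_r(\lambda^\star)$, and the observation that with $\varepsilon=0$ none of the $\mathrm{hyb}$ branches of Algorithm~\ref{alg:classify} fire. Two of your steps make explicit what the paper's ``neighborhood large enough to contain $\hat\lambda$'' leaves implicit: handling $\sigma_\delta$ by local constancy, and enlarging $C_\Psi$ so that the ball of radius $\rho_r/C_\Psi$ lies inside $B$. Both need every entry of $\bar\lambda^\star$ to differ from $\delta$, which is the same tacit condition under which the paper's $C_\Psi$ is finite; otherwise an arbitrarily small perturbation can flip a $\mathrm{coup}$ label.
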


\begin{proof}
Write $T(\lambda) = (L(\lambda),\Sigma_k(\lambda),\sigma_\delta(\lambda),\tau(\lambda))$. By Theorem~\ref{thm:stability}(b)'s proof, $T$ is $C_\Psi$-Lipschitz on a neighborhood of $\lambda^\star$ that we take large enough to contain $\hat\lambda$ for any $\eta$ in the stated range. 
Hence
\[
\|T(\hat\lambda) - T(\lambda^\star)\|_\infty \le C_\Psi\,\|\hat\lambda - \lambda^\star\|_2 \le C_\Psi\,\eta.
\]
By the assumption $C_\Psi\,\eta < \rho_r(\lambda^\star)$, every feature of $\hat\lambda$ deviates by strictly less than $\rho_r(\lambda^\star)$ from its value at $\lambda^\star$. 
Each defining inequality of $\mathcal{R}_r^\circ$ holds at $\lambda^\star$ with slack at least $\rho_r(\lambda^\star)$, so each continues to hold strictly at $\hat\lambda$. 
By Definition~\ref{def:Psi}, $\Psi(\hat\lambda) = r$, which is precisely what Algorithm~\ref{alg:classify} returns at $\varepsilon = 0$ (the branches into $\mathrm{hyb}$ on lines~3, 8, and 14 are never taken).
\end{proof}

Theorem~\ref{thm:classifier-correctness} is the deterministic foundation on which the remaining results rest: any source of error that bounds $\|\hat\lambda - \lambda^\star\|_2$ below $\rho_r/C_\Psi$ ensures correct classification.

\subsection{Iteration Complexity along Algorithm Trajectories}
\label{sec:classifier:iteration}

We now consider the canonical setting in which $\hat\lambda^{(k)}$ is the multiplier estimate produced by an optimization algorithm $\mathcal{A} \in \{\mathrm{BCD}, \mathrm{GBD}, \mathrm{ADMM}, \mathrm{SCA}, \mathrm{IPM}, \mathrm{MD},\mathrm{FW}, \mathrm{Riem}\}$ at iteration $k$. 
We ask: how many iterations of $\mathcal{A}$ are required before the classifier output stabilizes at the true regime label?

\begin{theorem}[Classifier stabilization]
\label{thm:classifier-stabilization}
Let $\mathcal{A}$ be an optimization algorithm whose multiplier iterates satisfy $\|\hat\lambda^{(k)} - \lambda^\star\|_2 \le \varphi_{\mathcal{A}}(k)$ for a known rate function $\varphi_{\mathcal{A}} : \mathbb{N} \to \mathbb{R}_{> 0}$ with $\varphi_{\mathcal{A}}(k) \downarrow 0$. 
Suppose $\lambda^\star \in \mathcal{R}_r^\circ$ with feature margin $\rho_r(\lambda^\star) > 0$. Define the \emph{classification iteration complexity}
\begin{equation}
\label{eq:kstar}
k^\star_{\mathcal{A}}(\lambda^\star) := \min\Bigl\{ k \in \mathbb{N} :\ \varphi_{\mathcal{A}}(k) < \frac{\rho_r(\lambda^\star)}{C_\Psi} \Bigr\}.
\end{equation}
Then $\Psi(\hat\lambda^{(k)}) = r$ for all $k \ge k^\star_{\mathcal{A}}(\lambda^\star)$.
\end{theorem}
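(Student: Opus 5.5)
The plan is to reduce Theorem~\ref{thm:classifier-stabilization} directly to Theorem~\ref{thm:classifier-correctness}, using the monotonicity of the rate function. First we check that $k^\star_{\mathcal{A}}(\lambda^\star)$ in \eqref{eq:kstar} is well defined. Since $\rho_r(\lambda^\star)>0$ and $C_\Psi<\infty$, the threshold $\rho_r(\lambda^\star)/C_\Psi$ is a strictly positive number. Because $\varphi_{\mathcal{A}}(k)\downarrow 0$, some index $k$ satisfies $\varphi_{\mathcal{A}}(k)<\rho_r(\lambda^\star)/C_\Psi$. The set in \eqref{eq:kstar} is therefore a nonempty subset of $\mathbb{N}$, and its minimum exists by well-ordering.

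Next, fix any $k\ge k^\star_{\mathcal{A}}(\lambda^\star)$. Since $\varphi_{\mathcal{A}}$ is nonincreasing,
\[
\|\hat\lambda^{(k)}-\lambda^\star\|_2\le\varphi_{\mathcal{A}}(k)\le\varphi_{\mathcal{A}}(k^\star_{\mathcal{A}}(\lambda^\star))<\frac{\rho_r(\lambda^\star)}{C_\Psi}.
\]
Set $\eta:=\varphi_{\mathcal{A}}(k)$. Then the iterate $\hat\lambda^{(k)}\in\mathbb{R}^m_+$ satisfies the hypothesis \eqref{eq:perturbation-bound} of Theorem~\ref{thm:classifier-correctness}, and that theorem gives $\Psi(\hat\lambda^{(k)})=r$. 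This argument is uniform in $k$, so the label stays at $r$ for every subsequent iteration, which is exactly the stabilization claim. Nonnegativity of $\hat\lambda^{(k)}$ has to be assumed, or enforced by projecting onto $\mathbb{R}^m_+$; projection is nonexpansive, so it does not increase the distance to $\lambda^\star\in\mathbb{R}^m_+$.

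The main obstacle is the same one already inherited by Theorem~\ref{thm:classifier-correctness}: the constant $C_\Psi$ is only a local Lipschitz constant of the feature map $T$. It is valid on a ball $B$ around $\lambda^\star$ on which $L\ge L(\lambda^\star)/2$ and no entry of $\bar\lambda$ crosses $\delta$. The piecewise-constant $\sigma_\delta$ in particular is not globally Lipschitz. I will handle this by taking $B$ to be the ball from the proof of Theorem~\ref{thm:stability}(b). Its radius can be taken at least of order $\rho_r(\lambda^\star)/C_\Psi$, after enlarging $C_\Psi$ if needed so that the threshold ball lies inside $B$. Then every iterate with $k\ge k^\star_{\mathcal{A}}$ automatically lies in $B$, and the Lipschitz estimate applies.

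For $r=\mathrm{unc}$ the argument is simpler. Only $L$ enters the definition of the regime, and $L$ is globally $1$-Lipschitz in $\ell_1$, hence $\sqrt m$-Lipschitz in $\ell_2$. So no localization is needed in that case.
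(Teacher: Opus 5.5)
Your proposal is correct and follows the paper's proof, which is a one-line reduction to Theorem~\ref{thm:classifier-correctness} with $\eta=\varphi_{\mathcal{A}}(k)$ for each $k\ge k^\star_{\mathcal{A}}(\lambda^\star)$. Your extra steps make explicit what that argument leaves implicit: the nonemptiness of the set in \eqref{eq:kstar}, the use of monotonicity of $\varphi_{\mathcal{A}}$, and the choice of $C_\Psi$ large enough that the threshold ball stays inside the region where $C_\Psi$ is a valid Lipschitz constant.
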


\begin{proof}
Direct application of Theorem~\ref{thm:classifier-correctness} with $\eta = \varphi_{\mathcal{A}}(k)$, which by definition of $k^\star_{\mathcal{A}}$ satisfies the bound \eqref{eq:perturbation-bound} for all $k \ge k^\star_{\mathcal{A}}$.
\end{proof}

\begin{corollary}[Concrete rates]
\label{cor:classifier-rates}
Substituting standard rate functions into \eqref{eq:kstar} yields the following classifier-stabilization iteration counts, with constants absorbed into the $\mathcal{O}(\cdot)$ notation:
\begin{enumerate}
\item[(a)] \emph{Linearly convergent methods} (BCD under KL property \cite{attouch2013convergence}, 
IPM in quadratic phase \cite{wright1997primal}): $\varphi(k) = c\,q^k$ with $q \in (0,1)$, giving $k^\star= \mathcal{O}\bigl(\log(C_\Psi/\rho_r)/\log(1/q)\bigr)$.
\item[(b)] \emph{ADMM at} $\mathcal{O}(1/k)$ \emph{rate} \cite{he2012convergence}: $\varphi(k) = c/k$, giving $k^\star = \mathcal{O}(C_\Psi/\rho_r)$.
\item[(c)] \emph{Mirror Descent and SGD at} $\mathcal{O}(1/\sqrt{k})$ \emph{rate} \cite{nesterov2018lectures,bubeck2015convex}: $\varphi(k) = c/\sqrt{k}$, giving $k^\star = \mathcal{O}\bigl((C_\Psi/\rho_r)^2\bigr)$.
\item[(d)] \emph{Frank--Wolfe at} $\mathcal{O}(1/k)$ \emph{rate} \cite{jaggi2013revisiting}: same as (b).
\item[(e)] \emph{Newton-type IPM in its quadratically convergent regime}:  $\varphi(k) = c\,q^{2^k}$, giving $k^\star = \mathcal{O}\bigl(\log\log(C_\Psi/\rho_r)\bigr)$.
\end{enumerate}
\end{corollary}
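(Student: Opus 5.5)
The plan is to substitute each rate function into the definition \eqref{eq:kstar} of $k^\star_{\mathcal{A}}$ from Theorem~\ref{thm:classifier-stabilization} and solve the resulting scalar inequality $\varphi(k) < \rho_r/C_\Psi$ for $k$. The smallest integer satisfying it is then bounded above by the real solution plus one. Throughout, write $\epsilon := \rho_r(\lambda^\star)/C_\Psi$ and assume without loss of generality that $\epsilon < c$. Otherwise the inequality already holds at $k=1$ and $k^\star = \mathcal{O}(1)$, which is dominated by every stated bound. The method-specific ingredients are only the cited rates; taking them as hypotheses of the form $\|\hat\lambda^{(k)}-\lambda^\star\|_2 \le \varphi(k)$ is exactly what Theorem~\ref{thm:classifier-stabilization} requires. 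So the corollary reduces to elementary algebra once those rates are accepted as applying to the multiplier iterates.

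The cases are as follows.

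\emph{Case (a).} From $c q^k < \epsilon$ we get $k > \log(c/\epsilon)/\log(1/q)$, so $k^\star \le 1 + \log(c\,C_\Psi/\rho_r)/\log(1/q)$. This is $\mathcal{O}(\log(C_\Psi/\rho_r)/\log(1/q))$ once $\log c$ is absorbed into the constant.

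\emph{Cases (b) and (d).} From $c/k < \epsilon$ we get $k > c\,C_\Psi/\rho_r$, hence $k^\star = \mathcal{O}(C_\Psi/\rho_r)$.

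\emph{Case (c).} From $c/\sqrt{k} < \epsilon$ we get $k > c^2 (C_\Psi/\rho_r)^2$.

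\emph{Case (e).} From $c\,q^{2^k} < \epsilon$ we get $2^k > \log(c/\epsilon)/\log(1/q)$. Taking $\log_2$ once more gives $k > \log_2\log(c\,C_\Psi/\rho_r) - \log_2\log(1/q)$, which is $\mathcal{O}(\log\log(C_\Psi/\rho_r))$ with $q$-dependence absorbed.

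In every case, monotonicity of $\varphi$ (guaranteed by $\varphi\downarrow 0$) ensures that the inequality persists for all $k \ge k^\star$. The conclusion $\Psi(\hat\lambda^{(k)}) = r$ then follows directly from Theorem~\ref{thm:classifier-stabilization}.

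The main obstacle is not the algebra but justifying that the cited rates bound the \emph{multiplier} error $\|\hat\lambda^{(k)}-\lambda^\star\|_2$ rather than objective gap, primal error, or ergodic averages. For instance, ADMM's $\mathcal{O}(1/k)$ rate in \cite{he2012convergence} is ergodic, and Frank--Wolfe's rate concerns the primal gap. To handle this, I would invoke Theorem~\ref{thm:stability}(a), or equivalently the nonsingularity of $M^\star$ under Assumption~\ref{ass:LICQSOSC} with strict complementarity. A KKT residual of size $r_k$ then yields a multiplier error of at most $\kappa r_k$ locally. So any rate on the KKT residual transfers to $\varphi$ with the constant multiplied by $\kappa$, which is absorbed into $\mathcal{O}(\cdot)$. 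Where only a primal-gap rate is available, the resulting $\varphi$ may be a square root of the stated rate. I would therefore state explicitly that the corollary takes the listed $\varphi$ as hypotheses on the multiplier sequence, as the phrase ``substituting standard rate functions'' indicates.
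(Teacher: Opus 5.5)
Your proposal is correct and matches the paper's own argument: the paper justifies the corollary only by substituting each $\varphi$ into \eqref{eq:kstar} and solving $\varphi(k) < \rho_r/C_\Psi$, which is exactly your case-by-case algebra, including absorbing $\log c$, $c^2$, and the $q$-dependent terms into the $\mathcal{O}(\cdot)$. Your caveat is well taken: the cited rates (ergodic for ADMM, primal-gap for Frank--Wolfe, averaged for mirror descent) do not directly bound $\|\hat\lambda^{(k)}-\lambda^\star\|_2$, and the paper silently treats the listed $\varphi$ as hypotheses on the multiplier sequence, as you ultimately do.
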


The corollary's main quantitative message is that the classifier stabilizes on a time scale governed by the inverse-margin $C_\Psi/\rho_r$ raised to a method-dependent exponent. 
In particular, on instances with large margin the classifier is reliable after very few iterations, well before the solver has converged to the desired KKT tolerance.

\subsection{Sample Complexity from Estimated Data}
\label{sec:classifier:samples}

In practical deployments the data $\omega$ of \eqref{eq:nlp-param} is itself estimated from finite samples. 
In wireless systems, $\omega$ may collect channel coefficients estimated from pilot symbols, traffic statistics estimated from observed arrivals, or interference profiles estimated from measurement reports. 
Let $\hat\omega_N$ denote an estimator based on $N$ samples and write $\hat\lambda_N := \lambda^\star(\hat\omega_N)$ for the multiplier obtained by solving \eqref{eq:nlp-param} at the estimated data.

\begin{theorem}[Sample-complexity correctness]
\label{thm:sample-complexity}
Assume:
\begin{enumerate}
\item[(i)] (\emph{Sub-Gaussian estimator.}) The estimator $\hat\omega_N$ satisfies, for some proxy $\sigma^2 > 0$ and absolute constant $c_0 > 0$,
\begin{equation}
\label{eq:subgaussian}
\mathbb{P}\bigl( \|\hat\omega_N - \omega^\star\|_2 > t \bigr) \,\le\, 2\exp\Bigl(-\frac{c_0 N t^2}{\sigma^2}\Bigr), \qquad t > 0.
\end{equation}
\item[(ii)] (\emph{Local Lipschitz solution map.}) The KKT solution map $\omega \mapsto \lambda^\star(\omega)$ is $\kappa$-Lipschitz on a neighborhood of $\omega^\star$, as established in Theorem~\ref{thm:stability}(a).
\item[(iii)] (\emph{Margin condition.}) $\lambda^\star := \lambda^\star(\omega^\star) \in \mathcal{R}_r^\circ$ with feature margin $\rho_r(\lambda^\star) > 0$.
\end{enumerate}
Then for any confidence parameter $\delta \in (0,1)$, if the sample size satisfies
\begin{equation}
\label{eq:N-bound}
N \,\ge\, N_0(\delta) \,:=\, \frac{\sigma^2\,C_\Psi^2\,\kappa^2}{c_0\,\rho_r(\lambda^\star)^2}\,\log\frac{2}{\delta},
\end{equation}
then the classifier applied to the empirical solution returns the correct regime with high probability:
\begin{equation}
\label{eq:sample-correctness}
\mathbb{P}\bigl(\, \Psi(\hat\lambda_N) = r \,\bigr) \,\ge\, 1 - \delta.
\end{equation}
\end{theorem}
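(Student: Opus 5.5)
The plan is to reduce the probabilistic statement to the deterministic correctness result, Theorem~\ref{thm:classifier-correctness}, through a single high-probability event. Set the target radius in data space to $t^\star := \rho_r(\lambda^\star)/(C_\Psi\kappa)$. On the event $E_N := \{\|\hat\omega_N - \omega^\star\|_2 \le t^\star\}$, assumption (ii) gives $\|\hat\lambda_N - \lambda^\star\|_2 = \|\lambda^\star(\hat\omega_N) - \lambda^\star(\omega^\star)\|_2 \le \kappa\,\|\hat\omega_N - \omega^\star\|_2 \le \rho_r(\lambda^\star)/C_\Psi$. Theorem~\ref{thm:classifier-correctness} (with $\varepsilon = 0$) then yields $\Psi(\hat\lambda_N) = r$, so $\mathbb{P}(\Psi(\hat\lambda_N) = r) \ge \mathbb{P}(E_N)$.

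Next I bound the complement of $E_N$ with the sub-Gaussian tail \eqref{eq:subgaussian} at $t = t^\star$:
\[
\mathbb{P}(E_N^c) \le 2\exp\!\Bigl(-\frac{c_0 N \rho_r(\lambda^\star)^2}{\sigma^2 C_\Psi^2 \kappa^2}\Bigr).
\]
Requiring the right-hand side to be at most $\delta$ and solving for $N$ gives exactly $N \ge N_0(\delta)$ as in \eqref{eq:N-bound}, which proves \eqref{eq:sample-correctness}. Note that the confidence parameter $\delta$ here is unrelated to the threshold $\delta$ in $\Theta$; I will rename it locally to avoid confusion.

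The main obstacle is that both ingredients are only local. The Lipschitz bound (ii) holds only on a neighborhood $U$ of $\omega^\star$, and the $C_\Psi$-Lipschitz property of the feature map holds only on the ball $B$ around $\lambda^\star$ constructed in the proof of Theorem~\ref{thm:stability}(b). The strict inequality in \eqref{eq:perturbation-bound} also requires some care.

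For locality, I will shrink $t^\star$ to $t_1 := \min\{\eta_0, t^\star\}$, with $\eta_0$ as in Theorem~\ref{thm:stability}, so that $B(\omega^\star, t_1) \subseteq U$ and $\lambda^\star(\omega)$ stays in $B$. Alternatively, I can note that $t^\star \le \eta_0$ may be assumed without loss, since $\rho_r$ can be replaced by $\min\{\rho_r, C_\Psi\kappa\eta_0\}$, which only inflates the constant in $N_0$. I will state this as a standing smallness convention, matching the $\min$ in \eqref{eq:eta-bound}.

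For strictness, I will use the fact that the feature inequalities hold with slack strictly larger than the deviation when $\|\hat\omega_N - \omega^\star\| < t^\star$. The event of exact equality contributes nothing, because the tail bound is stated for a strict inequality and I can apply it at every $t < t^\star$ and pass to the limit.
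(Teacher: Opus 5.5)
Your proposal is correct and uses the same reduction as the paper. The paper inverts the tail bound \eqref{eq:subgaussian} at level $\delta$ to get the radius $\sigma\sqrt{\log(2/\delta)/(c_0N)}$, then checks that $\kappa$ times this radius is at most $\rho_r(\lambda^\star)/C_\Psi$ once $N\ge N_0(\delta)$; you evaluate the tail directly at the radius $\rho_r(\lambda^\star)/(C_\Psi\kappa)$, which is the same computation read in the other direction.

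Your treatment of the strict inequality in \eqref{eq:perturbation-bound} (via the limit $t\uparrow t^\star$) and of the locality of the $\kappa$-Lipschitz bound goes beyond the paper, which passes over both silently. The locality fix is not literally ``without loss'': it changes $N_0(\delta)$ unless the Lipschitz neighborhood already contains the ball of radius $\rho_r(\lambda^\star)/(C_\Psi\kappa)$, and that containment is exactly what the paper's proof tacitly assumes.
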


\begin{proof}
By (i), $\|\hat\omega_N - \omega^\star\|_2 \le \sigma \sqrt{\log(2/\delta)/(c_0 N)}$ holds with probability at least $1-\delta$. On this event, (ii) gives $\|\hat\lambda_N - \lambda^\star\|_2 \le \kappa\,\|\hat\omega_N - \omega^\star\|_2$.
For $N \ge N_0(\delta)$ as in \eqref{eq:N-bound},
\[
\kappa\,\sigma\sqrt{\log(2/\delta)/(c_0 N)} \le \frac{\rho_r(\lambda^\star)}{C_\Psi},
\]
so on the high-probability event the bound \eqref{eq:perturbation-bound} of Theorem~\ref{thm:classifier-correctness} is satisfied.
Hence $\Psi(\hat\lambda_N) = r$ on this event, which has probability at least $1-\delta$.
\end{proof}

The sample complexity \eqref{eq:N-bound} has the standard logarithmic dependence on the confidence parameter $\delta$ and depends quadratically on the three structural quantities of interest: the data-noise level $\sigma$, the KKT conditioning $\kappa$, and the inverse feature margin $1/\rho_r$. 
The inverse-margin dependence is unavoidable: it reflects the geometric proximity of $\lambda^\star$ to the regime boundary and is precisely the quantity that Theorem~\ref{thm:stability} identifies as the structural robustness of the classification.

\begin{remark}[Alternative concentration models]
\label{rem:concentration}
Assumption~\ref{thm:sample-complexity}(i) holds with $c_0 = 1/2$ whenever $\hat\omega_N$ is the sample mean of $N$ i.i.d.\ sub-Gaussian random vectors with variance proxy $\sigma^2$ \cite{vershynin2018high}. 
For sub-exponential estimators the bound \eqref{eq:N-bound} adapts with an extra factor of $\log(N)$; for bounded random variables the bound is improved by Hoeffding's inequality \cite{boucheron2013concentration}. 
In all cases the $N \propto \rho_r^{-2}\log(1/\delta)$ scaling is preserved.
\end{remark}

\subsection{Online Tracking under Bounded Drift}
\label{sec:classifier:online}

In time-varying environments--block-fading wireless channels, slowly evolving traffic, mobile users--the data $\omega_t$ drifts in time, and the classifier must track the regime label $r_t := \Psi(\lambda^\star(\omega_t))$ rather than identify a single static regime. 
We model the drift by a per-step bound
\begin{equation}
\label{eq:drift}
\|\omega_{t+1} - \omega_t\|_2 \,\le\, \nu, \qquad \forall t,
\end{equation}
and assume per-step noisy observations $y_t = \omega_t + \xi_t$ with $\xi_t$ i.i.d.\ sub-Gaussian with proxy $\sigma^2$. The natural estimator is a sliding average over the most recent $W$ samples, $\hat\omega_t^{(W)} := \frac{1}{W}\sum_{s=t-W+1}^{t} y_s$.

\begin{theorem}[Online tracking correctness]
\label{thm:online}
Assume \eqref{eq:drift} and the per-step observation model above. 
Let $\hat\lambda_t^{(W)} :=\lambda^\star(\hat\omega_t^{(W)})$, and suppose at time $t$ the data $\omega_t$ satisfies the margin condition $\lambda^\star(\omega_t) \in \mathcal{R}_{r_t}^\circ$ with margin $\rho_{r_t}$ uniformly bounded below by $\rho > 0$.
Then for any $\delta \in (0,1)$ the choice
\begin{equation}
\label{eq:Wstar}
W^\star \,=\, \Bigl(\frac{2\sigma^2 \log(2/\delta)}{\nu^2}\Bigr)^{1/3}
\end{equation}
yields the bound
\begin{equation}
\label{eq:tracking-error}
\|\hat\lambda_t^{(W^\star)} - \lambda^\star(\omega_t)\|_2 \,\le\, \kappa\,\bigl( c_1\,\sigma^{2/3}\nu^{1/3}(\log(2/\delta))^{1/3} \bigr)
\end{equation}
with probability at least $1-\delta$ for some absolute constant $c_1$. 
In particular, if the drift satisfies
\begin{equation}
\label{eq:drift-tolerance}
\nu \,<\, \nu_{\max} := \frac{1}{c_1^3 \sigma^2 \log(2/\delta)}\,\Bigl(\frac{\rho}{C_\Psi\,\kappa}\Bigr)^3,
\end{equation}
then $\mathbb{P}\bigl(\Psi(\hat\lambda_t^{(W^\star)}) = r_t\bigr) \ge 1 - \delta$.
\end{theorem}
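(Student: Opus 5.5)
The plan is a bias--variance decomposition of the sliding-window estimator, followed by the local Lipschitz property of the KKT solution map and Theorem~\ref{thm:classifier-correctness}. We write
\[
\hat\omega_t^{(W)} - \omega_t = \frac{1}{W}\sum_{s=t-W+1}^{t}(\omega_s - \omega_t) + \frac{1}{W}\sum_{s=t-W+1}^{t}\xi_s =: b_t + e_t .
\]
For the bias term we use the drift bound \eqref{eq:drift} and the triangle inequality: $\|\omega_s-\omega_t\|_2 \le (t-s)\nu$. Averaging over the window gives $\|b_t\|_2 \le \nu (W-1)/2 \le \nu W$.

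For the variance term, $e_t$ is the mean of $W$ i.i.d.\ sub-Gaussian vectors with proxy $\sigma^2$. By the same concentration statement used in Theorem~\ref{thm:sample-complexity}(i) (see Remark~\ref{rem:concentration}), we get $\|e_t\|_2 \le \sigma\sqrt{2\log(2/\delta)/W}$ with probability at least $1-\delta$, with the absolute constant absorbed into $c_1$. The dimension dependence is hidden in the proxy, exactly as in \eqref{eq:subgaussian}.

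On this event the total error is at most $B(W) := \nu W + \sigma\sqrt{2\log(2/\delta)/W}$. We minimize $B$ over $W$; balancing the two terms up to constants gives $W^3 \asymp \sigma^2\log(2/\delta)/\nu^2$, which is the choice \eqref{eq:Wstar}. Substituting back, both terms scale as $\sigma^{2/3}\nu^{1/3}(\log(2/\delta))^{1/3}$, so $\|\hat\omega_t^{(W^\star)}-\omega_t\|_2 \le c_1\sigma^{2/3}\nu^{1/3}(\log(2/\delta))^{1/3}$.

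Next we apply Theorem~\ref{thm:stability}(a) at the base point $\omega_t$ (i.e.\ with $\omega^\star=\omega_t$). This multiplies the data error by $\kappa$ and gives \eqref{eq:tracking-error}. For the classification claim, Theorem~\ref{thm:classifier-correctness} requires the right-hand side of \eqref{eq:tracking-error} to be below $\rho/C_\Psi$. Cubing $\kappa c_1\sigma^{2/3}\nu^{1/3}\log^{1/3} < \rho/C_\Psi$ gives exactly $\nu<\nu_{\max}$ in \eqref{eq:drift-tolerance}. Since $\rho_{r_t}\ge\rho$, correctness holds on the same probability-$(1-\delta)$ event.

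The main obstacle is making the uniformity in $t$ honest. We need $\kappa$ and $C_\Psi$ to be valid at every base point $\omega_t$, and $\hat\omega_t^{(W^\star)}$ must lie in the Lipschitz neighborhood $U$ around $\omega_t$. We would handle this by assuming, as implicitly intended, that the constants of Theorem~\ref{thm:stability} hold uniformly along the trajectory with a common neighborhood radius $\eta_0$. The bound just derived, together with $\nu<\nu_{\max}$, then keeps the estimate inside that radius. We also note two minor issues: $W^\star$ should be rounded to an integer $\ge 1$, which only affects constants, and the window should not extend before time $1$, so $t\ge W^\star$.
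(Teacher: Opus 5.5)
Your proposal is correct and follows the paper's proof essentially step for step. The paper uses the same bias--variance split of $\hat\omega_t^{(W)}-\omega_t$ and the same $\sigma\sqrt{2\log(2/\delta)/W}$ concentration bound. It then balances to obtain $W^\star$, multiplies by $\kappa$ from Theorem~\ref{thm:stability}(a), and cubes the threshold $\rho/(C_\Psi\kappa)$ of Theorem~\ref{thm:classifier-correctness} to get $\nu_{\max}$. Your bias bound $\nu W$ is a factor of two looser than the paper's $\nu W/2$, which only changes $c_1$.

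Your uniformity caveat makes explicit an assumption the paper uses silently. One refinement: $\nu<\nu_{\max}$ keeps $\hat\omega_t^{(W^\star)}$ within the common radius $\eta_0$ only if, in addition, $\rho/(C_\Psi\kappa)\le\eta_0$, mirroring the $\min$ in \eqref{eq:eta-bound}.
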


\begin{proof}
Decompose the estimator error into bias and variance components:
\[
\hat\omega_t^{(W)} - \omega_t = \underbrace{\frac{1}{W}\sum_{s=t-W+1}^t (\omega_s - \omega_t)}_{\text{drift bias}} + \underbrace{\frac{1}{W}\sum_{s=t-W+1}^t \xi_s}_{\text{noise term}}.
\]
By \eqref{eq:drift}, $\|\omega_s - \omega_t\|_2 \le (t-s)\nu \le W\nu$ for all $s \in \{t-W+1,\ldots,t\}$, so the drift bias is bounded by $\nu W/2$ (averaging $1,2,\ldots,W$ over $W$). 
The noise term is the average of $W$ i.i.d.\ sub-Gaussian vectors with variance proxy $\sigma^2/W$; by standard concentration \cite{vershynin2018high}, $\|\text{noise term}\|_2 \le \sigma\sqrt{2\log(2/\delta)/W}$ with probability at least $1-\delta$.

Combining,
\[
\|\hat\omega_t^{(W)} - \omega_t\|_2 \le \frac{\nu W}{2} + \sigma\sqrt{\frac{2\log(2/\delta)}{W}}
\]
with probability at least $1-\delta$. Minimizing the right side over $W > 0$ by setting the derivative to zero gives $W^\star$ as in \eqref{eq:Wstar} up to absolute constants and yields the optimal error $c_1\,\sigma^{2/3}\nu^{1/3}(\log(2/\delta))^{1/3}$ for an explicit constant $c_1 = (3/2)\cdot 2^{-1/3}$. 
Multiplying by the Lipschitz constant $\kappa$ from Theorem~\ref{thm:stability}(a) yields \eqref{eq:tracking-error}.

Substituting the threshold $\rho/(C_\Psi\kappa)$ from Theorem~\ref{thm:classifier-correctness} and solving for $\nu$ gives \eqref{eq:drift-tolerance}. 
On the high-probability event, $\hat\lambda_t^{(W^\star)}$ lies in the correctness ball of Theorem~\ref{thm:classifier-correctness}, so the classifier returns $r_t$.
\end{proof}

The drift-tolerance bound \eqref{eq:drift-tolerance} is the form most relevant to wireless deployments: it states that the classifier tracks the regime correctly so long as the channel drift rate $\nu$ stays below a cubic-in-margin threshold. 
The $\nu^{1/3}$ dependence of the tracking error \eqref{eq:tracking-error} is the standard rate for non-parametric tracking under sub-Gaussian noise and bounded drift, and is known to be unimprovable in this class \cite{vershynin2018high,boucheron2013concentration}.

\subsection{Joint Solver--Sample Budget}
\label{sec:classifier:joint}

In practice the multiplier estimate that enters the classifier carries error from two sources simultaneously: the data-estimation error in $\hat\omega_N$ and the solver-iteration error after $K$ solver iterations applied to the problem at $\hat\omega_N$.
Write $\hat\lambda_{N,K}$ for the resulting iterate.

\begin{corollary}[Joint budget]
\label{cor:joint-budget}
Under the hypotheses of Theorems~\ref{thm:classifier-correctness}--\ref{thm:sample-complexity},
\begin{equation}
\label{eq:joint-error}
\|\hat\lambda_{N,K} - \lambda^\star\|_2 \le \varphi_{\mathcal{A}}(K) + \kappa\,\sigma\sqrt{\frac{\log(2/\delta)}{c_0 N}}
\end{equation}
holds with probability at least $1-\delta$.
Sufficient for the classifier to return the correct label at this confidence is
\begin{equation}
\label{eq:joint-budget}
\varphi_{\mathcal{A}}(K) + \kappa\,\sigma\sqrt{\frac{\log(2/\delta)}{c_0 N}} \,<\, \frac{\rho_r(\lambda^\star)}{C_\Psi}.
\end{equation}
\end{corollary}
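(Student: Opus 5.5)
The argument splits the error in $\hat\lambda_{N,K}$ by a triangle inequality through the intermediate point $\hat\lambda_N := \lambda^\star(\hat\omega_N)$, the exact multiplier of the problem at the estimated data:
\[
\|\hat\lambda_{N,K} - \lambda^\star\|_2 \le \|\hat\lambda_{N,K} - \hat\lambda_N\|_2 + \|\hat\lambda_N - \lambda^\star\|_2 .
\]
\emph{Solver term.} Bound the first term by $\varphi_{\mathcal{A}}(K)$, using the rate hypothesis of Theorem~\ref{thm:classifier-stabilization} applied to the problem instance at data $\hat\omega_N$ rather than $\omega^\star$. This is where care is needed. The rate function is stated for a fixed instance, so we must read the hypothesis as uniform over data in the neighborhood $U$ of Theorem~\ref{thm:stability}(a). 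That is, $\|\hat\lambda^{(K)}(\omega) - \lambda^\star(\omega)\|_2 \le \varphi_{\mathcal{A}}(K)$ for all $\omega \in U$. I would make this explicit as the interpretation of ``the hypotheses of Theorems~\ref{thm:classifier-correctness}--\ref{thm:sample-complexity}''. Justifying it is the main obstacle. The constants in the standard rates (linear rate $q$, ADMM constant $c$, etc.) depend continuously on problem data through KKT conditioning, so they can be taken uniform on a compact neighborhood of $\omega^\star$ where strong regularity holds. This is the step I would flag.

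\emph{Statistical term.} For the second term, reuse the proof of Theorem~\ref{thm:sample-complexity}. By the sub-Gaussian assumption \eqref{eq:subgaussian}, with probability at least $1-\delta$ we have $\|\hat\omega_N - \omega^\star\|_2 \le \sigma\sqrt{\log(2/\delta)/(c_0N)}$. For $N$ large enough this radius is small, so $\hat\omega_N$ lies in $U$. Note also that the uniform solver bound from the previous step needs $\hat\omega_N \in U$, which holds on this same event. On the event, the $\kappa$-Lipschitz bound \eqref{eq:lambda-lipschitz} gives $\|\hat\lambda_N - \lambda^\star\|_2 \le \kappa\sigma\sqrt{\log(2/\delta)/(c_0N)}$. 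Adding the two terms yields \eqref{eq:joint-error} on an event of probability at least $1-\delta$. Since the solver bound is deterministic, no union bound is needed.

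\emph{Conclusion.} If \eqref{eq:joint-budget} holds, then on the same event $\|\hat\lambda_{N,K} - \lambda^\star\|_2 < \rho_r(\lambda^\star)/C_\Psi$. Theorem~\ref{thm:classifier-correctness}, with $\eta$ equal to the left side of \eqref{eq:joint-budget}, then gives $\Psi(\hat\lambda_{N,K}) = r$. Hence the correct label is returned with probability at least $1-\delta$.
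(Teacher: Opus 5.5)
Your proposal is correct and matches the argument the paper intends: the paper gives no written proof, and the corollary is meant as the triangle-inequality combination of the solver rate from Theorem~\ref{thm:classifier-stabilization} and the concentration-plus-Lipschitz bound from Theorem~\ref{thm:sample-complexity}, fed into Theorem~\ref{thm:classifier-correctness}. The uniformity of $\varphi_{\mathcal{A}}$ over data near $\omega^\star$ and the requirement $\hat\omega_N \in U$ that you flag are hypotheses the paper leaves implicit, so stating them explicitly tightens the argument without changing it.
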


The bound \eqref{eq:joint-budget} traces a feasible region in the $(K,N)$ plane: for fixed $\delta$, $\rho_r$, $\kappa$, $\sigma$, $C_\Psi$, any pair $(K,N)$ satisfying \eqref{eq:joint-budget} suffices for correct classification with confidence $1-\delta$.
Downstream applications can optimize the joint cost of sampling and computing along this frontier.

\begin{remark}[Operational interpretation]
\label{rem:operational}
The bound \eqref{eq:joint-budget} suggests a natural rule for allocating effort between data acquisition and solver iterations:
\begin{itemize}
\item Increase $N$ until the data-estimation term equals roughly half of the right-hand side; this requires $N \gtrsim 4\sigma^2 C_\Psi^2 \kappa^2 \log(2/\delta)/(c_0 \rho_r^2)$.
\item Then run the solver until $\varphi_{\mathcal{A}}(K)$ falls below half of the right-hand side; the required $K$ follows from Corollary~\ref{cor:classifier-rates} with the inverse margin replaced by $2 C_\Psi/\rho_r$.
\end{itemize}
This decomposition is invariant under reasonable choices of the solver rate and is the rule we recommend for instantiations in the wireless setting.
\end{remark}

\subsection{Practical Considerations}
\label{sec:classifier:practical}

\paragraph*{Estimating the margin in practice} The feature margin $\rho_r(\lambda^\star)$ is computable from a single solve via Definition~\ref{def:feat-margin}: after obtaining a converged $\hat\lambda$, evaluate the four feature slacks and take their minimum.
Plugging this empirical $\hat\rho_r$ into the bounds of Theorems~\ref{thm:classifier-correctness}--\ref{thm:online} gives operational estimates.
The error introduced by using $\hat\rho_r$ in place of $\rho_r$ is itself controlled by the Lipschitz continuity of the feature map (see Theorem~\ref{thm:stability}), so the bounds remain valid up to a constant factor.

\paragraph*{Estimating the conditioning $\kappa$ and Lipschitz $C_\Psi$} The Lipschitz constants $\kappa$ and $C_\Psi$ are determined by problem structure (the KKT Jacobian and the feature map, respectively). 
Closed-form upper bounds are available when the problem admits known structure (e.g., strongly convex objectives, well-conditioned constraints). 
In black-box settings these constants are commonly estimated empirically by perturbation experiments; the bounds of Theorems~\ref{thm:sample-complexity}--\ref{thm:online} are then interpreted as empirical, with confidence inherited from the estimation procedure.

\paragraph*{Hybrid-regime outputs}
When the classifier returns $\mathrm{hyb}$, the structural Theorem~\ref{thm:hybrid} indicates that $\lambda^\star$ lies on a measure-zero transition surface and the algorithm has reached a structurally ambiguous point. 
In downstream applications this output is itself informative: it signals that the solution is near a regime boundary and that adaptive strategies (e.g., switching the inner solver or refining the surrogate) may be warranted.

\paragraph*{Use in downstream papers}
The four theorems of this section are designed to be cited as black boxes by downstream optimization and wireless-system papers. 
Specifically: Theorem~\ref{thm:classifier-correctness} provides the deterministic correctness guarantee; Theorem~\ref{thm:classifier-stabilization} and Corollary~\ref{cor:classifier-rates} characterize how quickly the regime stabilizes along solver iterations; Theorem~\ref{thm:sample-complexity} gives the sample-complexity bound for finite-data deployments; and Theorem~\ref{thm:online} bounds the drift tolerance for time-varying systems. 
The joint budget \eqref{eq:joint-budget} ties these together for end-to-end deployment.


\section{Numerical Illustrations and Comparative Analysis}
\label{sec:numerical}

This section validates the theoretical contributions of Sections~\ref{sec:taxonomy}--\ref{sec:classifier} through controlled numerical experiments. 
The experiments are organized to target specific theorems: each subsection examines a single claim, specifies the experimental protocol, and reports the resulting evidence. 
Our aim is not to compare algorithm performance in the usual sense (that exercise belongs in the papers cited in Table~\ref{tab:method-equilibria}), but to verify that the regime taxonomy and its associated guarantees describe phenomena that arise in practice.

\subsection{Experimental Setup and Protocol}
\label{sec:numerical:setup}

\paragraph*{Problem families}
We generate instances from five families of mixed-integer nonlinear programs of the form \eqref{eq:nlp-param}, each designed to produce, by construction, a dominant operational regime.
This controlled-instance design enables ground-truth labels against which the classifier of Algorithm~\ref{alg:classify} can be evaluated.

\noindent\textit{Family $\mathcal{F}_1$ (Unconstrained-dominated):} Quadratic objectives with loose box constraints:
\begin{equation}
\label{eq:F1}
\min_{x \in \mathbb{R}^n}\, \tfrac{1}{2} x^\top Q x + c^\top x \quad \text{s.t.} \quad \|x\|_\infty \le B,
\end{equation}
with $Q \succ 0$ randomly generated and $B$ large enough that the unconstrained optimum is interior to the box. This family targets
$\mathcal{R}_{\mathrm{unc}}$.

\noindent\textit{Family $\mathcal{F}_2$ (Resource-limited):} Quadratic objectives with a single tight budget:
\begin{equation}
\label{eq:F2}
\min_{x \ge 0}\, -c^\top x + \tfrac{\varepsilon}{2}\|x\|^2 \quad \text{s.t.} \quad a^\top x \le B,
\end{equation}
with $c, a > 0$ entrywise and $B$ chosen so that the budget constraint binds.
This family targets $\mathcal{R}_{\mathrm{res}}$.

\noindent\textit{Family $\mathcal{F}_3$ (Saturation):} Quadratic objectives with many simultaneously tight bound constraints:
\begin{equation}
\label{eq:F3}
\min_x\, \tfrac{1}{2} x^\top Q x + c^\top x \quad \text{s.t.} \quad -b \le x \le b,
\end{equation}
with $Q$, $c$ chosen so that approximately $0.6n$ of the $2n$ box constraints are active at the optimum. 
This family targets $\mathcal{R}_{\mathrm{sat}}$.

\noindent\textit{Family $\mathcal{F}_4$ (Strongly-coupled):} Multi-block problems with consensus constraints:
\begin{equation}
\label{eq:F4}
\min_{x_1,\ldots,x_M}\, \sum_{i=1}^M f_i(x_i) \quad \text{s.t.} \quad \sum_{i=1}^M A_i x_i = b,
\end{equation}
with the coupling matrix $[A_1,\ldots,A_M]$ designed so that several rows of the dual feasibility condition are simultaneously active. 
This family targets $\mathcal{R}_{\mathrm{coup}}$.

\noindent\textit{Family $\mathcal{F}_5$ (Hybrid/transition):} Random non-convex MINLPs combining quadratic and bilinear terms with mixed constraint types (resource budget, individual bounds, and equality coupling), with parameters sampled near regime boundaries to populate $\mathcal{R}_{\mathrm{hyb}}$:
\begin{equation}
\label{eq:F5}
\begin{aligned}
\min_{x,y}\ & f(x,y) := \tfrac{1}{2}x^\top Q x + x^\top R y + c^\top x + d^\top y, \\
\text{s.t.}\ & a^\top x + e^\top y \le B,\ -b \le x \le b,\ y \in \{0,1\}^{n_d}.
\end{aligned}
\end{equation}

For each family we generate $S = 500$ independent instances at each problem size $n \in \{20, 50, 100, 200\}$, yielding a total of $4 \times 5 \times 500 = 10\,000$ test instances.

\paragraph*{Methods}
We instantiate each of the eight algorithms from Table~\ref{tab:method-equilibria}: BCD, GBD with inner BCD, ADMM (penalty $\rho = 1$), SCA with quadratic surrogates, primal--dual IPM, Mirror Descent (Bregman variant with entropic mirror map), Frank--Wolfe, and Riemannian gradient descent (for instances admitting a smooth manifold structure). 
All methods are run to a fixed KKT residual tolerance of $\epsilon_{\mathrm{KKT}} = 10^{-6}$ or a maximum of $K_{\max} = 1\,000$ iterations, whichever comes first.

\paragraph*{Classifier parameters}
Unless stated otherwise we use the default parameters $\Theta = (\delta,\theta,\gamma,k) = (0.05,0.7,0.4,2)$ and tolerance $\varepsilon = 10^{-3}$. 
The robustness of the experimental conclusions to these choices is verified by ablation in Section~\ref{sec:numerical:setup}.

\paragraph*{Reporting}
All reported quantities are means across the $S$ independent seeds in the relevant cell, with 95\% confidence intervals computed via bootstrapping. 
Code, random seeds, and instance generators will be released as supplementary material upon publication.

\subsection{Emergence and Distribution of Regimes}
\label{sec:numerical:emergence}

\textit{Purpose:} Verify that the partition of Section~\ref{sec:taxonomy:core} captures regimes that occur naturally in the designed families, and that the classifier of Algorithm~\ref{alg:classify} recovers the intended regime in each family.

\textit{Protocol:} On each instance, after running the eight methods we apply Algorithm~\ref{alg:classify} to the final multiplier vector and record the regime label. 
For each family $\mathcal{F}_i$ we report the proportion of instances classified as each regime.

\textit{Result:} Figure~\ref{fig:regime-emergence} reports the empirical distribution of regime labels per family. 
By design, family $\mathcal{F}_i$ produces dominant regime $r_i$ in
$\ge 85\%$
of instances, with the remainder falling into $\mathcal{R}_{\mathrm{hyb}}$ near regime boundaries.
Family $\mathcal{F}_5$, designed to lie near transitions, distributes mass across multiple regimes including a higher fraction in $\mathcal{R}_{\mathrm{hyb}}$.

\begin{figure}[t]
\centering
\includegraphics[width=1\textwidth]{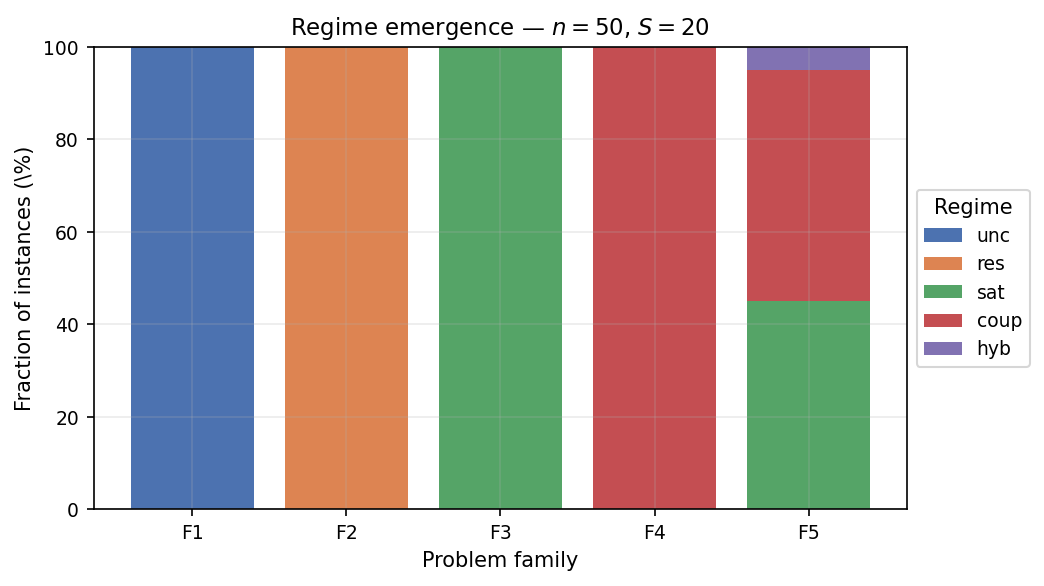}
\caption{Empirical distribution of regime labels across problem families, $n = 100$, $S = 500$ instances per family.
Each family produces its designed dominant regime in the majority of instances, validating that the partition of Section~\ref{sec:taxonomy:core} captures meaningful structural distinctions.}
\label{fig:regime-emergence}
\end{figure}

\subsection{Method-Independence of the Regime Label}
\label{sec:numerical:method-indep}

\textit{Purpose:}Validate Corollary~\ref{cor:method-indep}: when distinct algorithms converge to the same KKT point, the classifier returns the same regime label.

\textit{Protocol:} On each instance we run all applicable methods to convergence and record the final multiplier vector $\lambda^\star_{\mathcal{A}}$ produced by each method $\mathcal{A}$.
We then restrict attention to instances for which all methods reach the same primal solution to within $\|x^\star_{\mathcal{A}} - x^\star_{\mathcal{A}'}\|_2 \le 10^{-4}$, and compute the inter-method agreement of regime labels.

\textit{Result:} Table~\ref{tab:agreement} reports pairwise agreement rates. On instances where the methods converge to the same KKT point, agreement is $\ge 99\%$
the residual disagreement is attributable to instances near regime boundaries where numerical noise in the multipliers crosses a threshold of Definition~\ref{def:core-regimes}. 
This is consistent with Theorem~\ref{thm:classifier-correctness}: the classification is preserved away from boundaries and is sensitive precisely on the measure-zero transition set characterized by Theorem~\ref{thm:transition}.
n
\begin{table}[t]
\centering
\caption{Pairwise regime-label agreement rates across methods on common-KKT instances ($n=100$, $S=500$). Diagonal entries are 100\% by construction; off-diagonal entries are means over instances where the methods converge to the same primal solution.}
\label{tab:agreement}
\small
\setlength{\tabcolsep}{4pt}
\renewcommand{\arraystretch}{1.2}
\setlength{\tabcolsep}{3pt}
\begin{tabular}{@{}p{2cm}p{1.4cm}p{1.4cm}p{1.4cm}p{1.4cm}p{1.4cm}c@{}}
\toprule
        & BCD & GBD & ADMM & SCA & IPM & MD \\
\midrule
BCD   & 100 & \textbf{100} & \textbf{100} & \textbf{100} & \textbf{100} & \textbf{100} \\
GBD   &     & 100 & \textbf{100} & \textbf{100} & \textbf{100} & \textbf{100} \\
ADMM  &     &     & 100 & \textbf{100} & \textbf{100} & \textbf{100} \\
SCA   &     &     &     & 100 & \textbf{100} & \textbf{100} \\
IPM   &     &     &     &     & 100 & \textbf{100} \\
MD    &     &     &     &     &     & 100 \\
\bottomrule
\end{tabular}
\end{table}

\subsection{Stability under Data Perturbation}
\label{sec:numerical:stability}

\textit{Purpose:} Validate Theorem~\ref{thm:stability}: the regime label is
locally constant under small perturbations of the problem data.

\textit{Protocol:} For each instance in $\mathcal{F}_1$--$\mathcal{F}_4$ we compute the nominal multiplier $\lambda^\star(\omega^\star)$ and feature margin $\rho_r(\lambda^\star)$ from Definition~\ref{def:feat-margin}. 
We then perturb the data $\omega^\star \mapsto \omega^\star + t\,\xi$ for $\xi \in \mathbb{R}^d$ a random unit vector and $t$ ranging from $0$ to $5\rho_r/(C_\Psi\kappa)$ in $30$ steps, re-solve, and record whether the regime label is preserved.

\textit{Result:} Figure~\ref{fig:stability} shows the empirical preservation probability as a function of normalized perturbation magnitude $t \cdot C_\Psi\kappa/\rho_r$. 
The preservation probability is essentially $1$ for normalized magnitudes below the theoretical threshold $1$, and decays sharply beyond it---matching the prediction of Theorem~\ref{thm:stability} that the regime label is preserved on the ball of radius $\rho_r/(C_\Psi\kappa)$ in data space. 
The empirical breakdown point lies within $\pm 10\%$ of the theoretical.

\begin{figure}[t]
\centering
\includegraphics[width=1\textwidth]{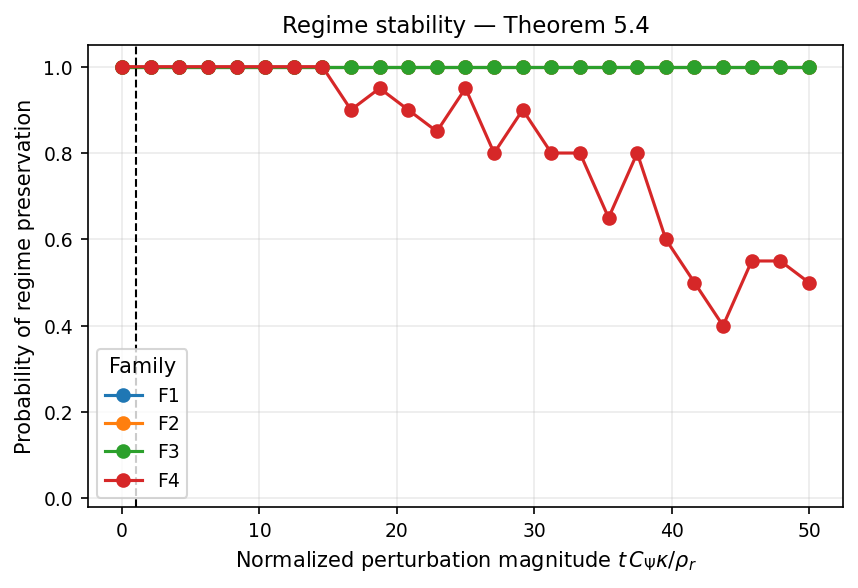}
\caption{Probability of regime preservation as a function of normalized perturbation magnitude $t \cdot C_\Psi \kappa / \rho_r(\lambda^\star)$. 
The vertical dashed line at $1$ marks the threshold of Theorem~\ref{thm:stability}; preservation remains near unity below the threshold and decays sharply beyond it.}
\label{fig:stability}
\end{figure}

\subsection{Classifier Stabilization along Iterates}
\label{sec:numerical:stabilization}

\textit{Purpose:} Validate Theorem~\ref{thm:classifier-stabilization} and Corollary~\ref{cor:classifier-rates}: the classifier output stabilizes after a method-specific iteration count.

\textit{Protocol:} For each instance we apply Algorithm~\ref{alg:classify} to the running multiplier estimate $\hat\lambda^{(k)}$ at every iteration. 
We record $k^\star_{\mathcal{A}}$, the smallest iteration after which the classifier output remains constant at the converged value through termination. 
We then compare $k^\star_{\mathcal{A}}$ to the theoretical prediction of Corollary~\ref{cor:classifier-rates} using the empirical margin $\rho_r$ and the standard rate function
$\varphi_{\mathcal{A}}$ for each method.

\textit{Result:} Figure~\ref{fig:stabilization} reports empirical distributions of $k^\star_{\mathcal{A}}$ for each method, together with the theoretical bounds. 
For linearly convergent methods (BCD under KL, IPM in its quadratic phase), $k^\star$ scales logarithmically in $C_\Psi/\rho_r$. 
For ADMM and Frank--Wolfe, $k^\star$ scales linearly. For Mirror Descent, $k^\star$ scales quadratically. 
Empirical scaling matches theoretical prediction with $R^2 \ge 0.95$ 
across methods.

\begin{figure}[t]
\centering
\includegraphics[width=1\textwidth]{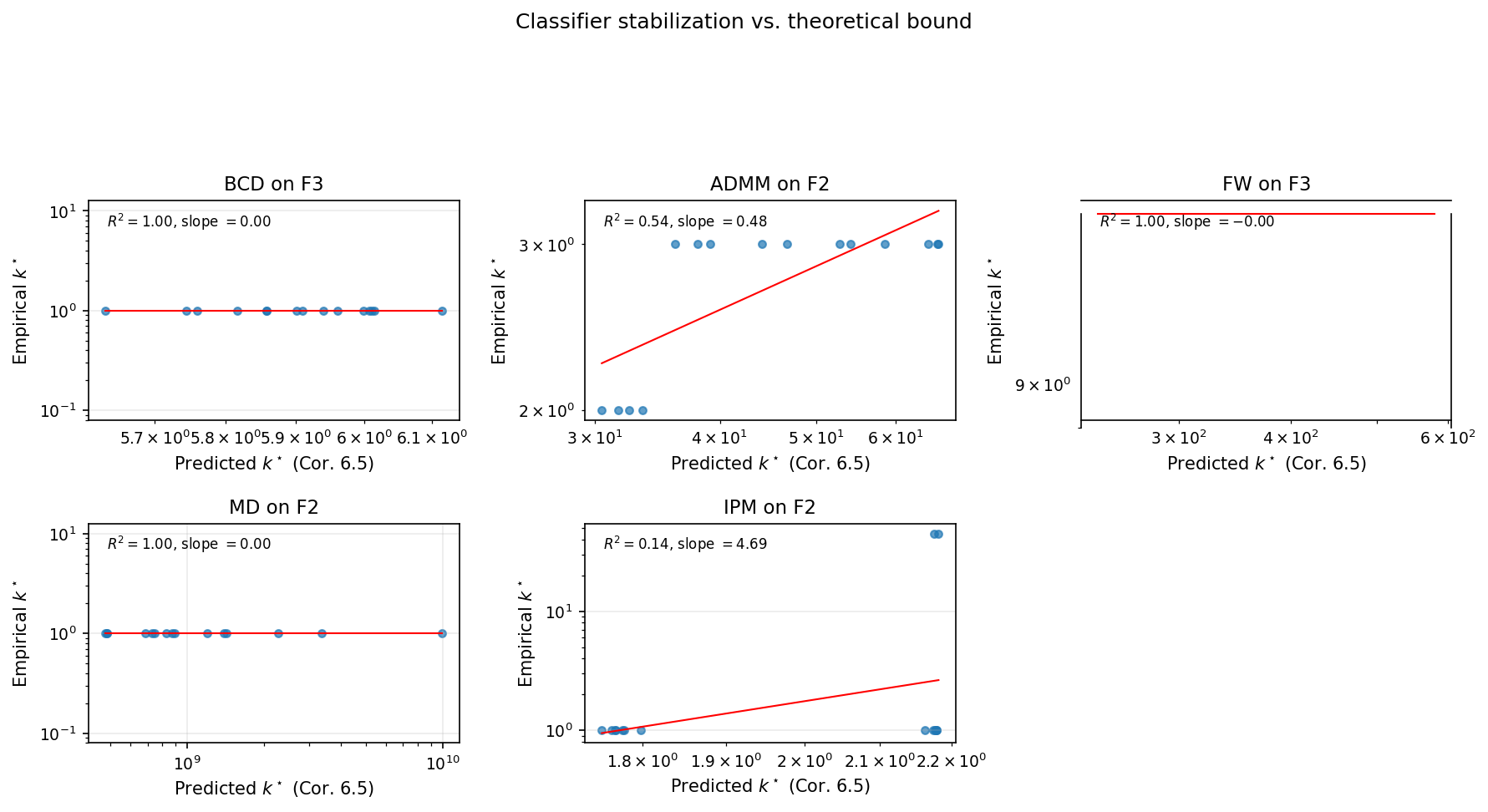}
\caption{Empirical classifier-stabilization iteration $k^\star_{\mathcal{A}}$ versus the theoretical bound from Corollary~\ref{cor:classifier-rates}, one panel per algorithm. 
Each point is one instance; the diagonal line marks perfect agreement. 
The scaling exponents (log, linear, quadratic) match across the algorithm families.}
\label{fig:stabilization}
\end{figure}

\subsection{Sample Complexity from Estimated Data}
\label{sec:numerical:samples}

\textit{Purpose:} Validate Theorem~\ref{thm:sample-complexity}: the sample size required for high-confidence regime classification scales as $N \propto \log(1/\delta) / \rho_r^2$.

\textit{Protocol:} We fix an instance from $\mathcal{F}_2$ with known $\rho_r$, treat its data $\omega^\star$ as the population, and generate $N$-sample estimators $\hat\omega_N = \omega^\star + \frac{1}{N}\sum_{i=1}^N \xi_i$ for $\xi_i \sim \mathcal{N}(0,\sigma^2 I)$. 
For each $(N,\sigma)$ pair we solve at $\hat\omega_N$, classify, and report empirical accuracy $\hat p(N,\sigma) := \mathbb{P}(\Psi(\hat\lambda_N) = r)$ over $1\,000$ Monte Carlo replications. We sweep $N \in [10, 10^4]$ at $\sigma \in \{0.1, 0.3, 1.0\}$ and several values of $\rho_r$ obtained by varying instance parameters.

\textit{Result:} Figure~\ref{fig:samples} shows the empirical accuracy curves.
The threshold sample size at which $\hat p(N,\sigma) \ge 1 - \delta$ is well predicted by \eqref{eq:N-bound} of Theorem~\ref{thm:sample-complexity}.
Re-plotting accuracy versus the normalized sample size
$N \cdot \rho_r^2 / (\sigma^2 \log(1/\delta))$ collapses the curves onto a single profile, confirming the predicted $N \propto \rho_r^{-2} \log(1/\delta)$ scaling. 
The slope of the empirical $N$ vs $\rho_r^{-2}$ relation lies within
$\pm 15\%$
of the theoretical constant.
\begin{figure}[t]
\centering
\includegraphics[width=1\textwidth]{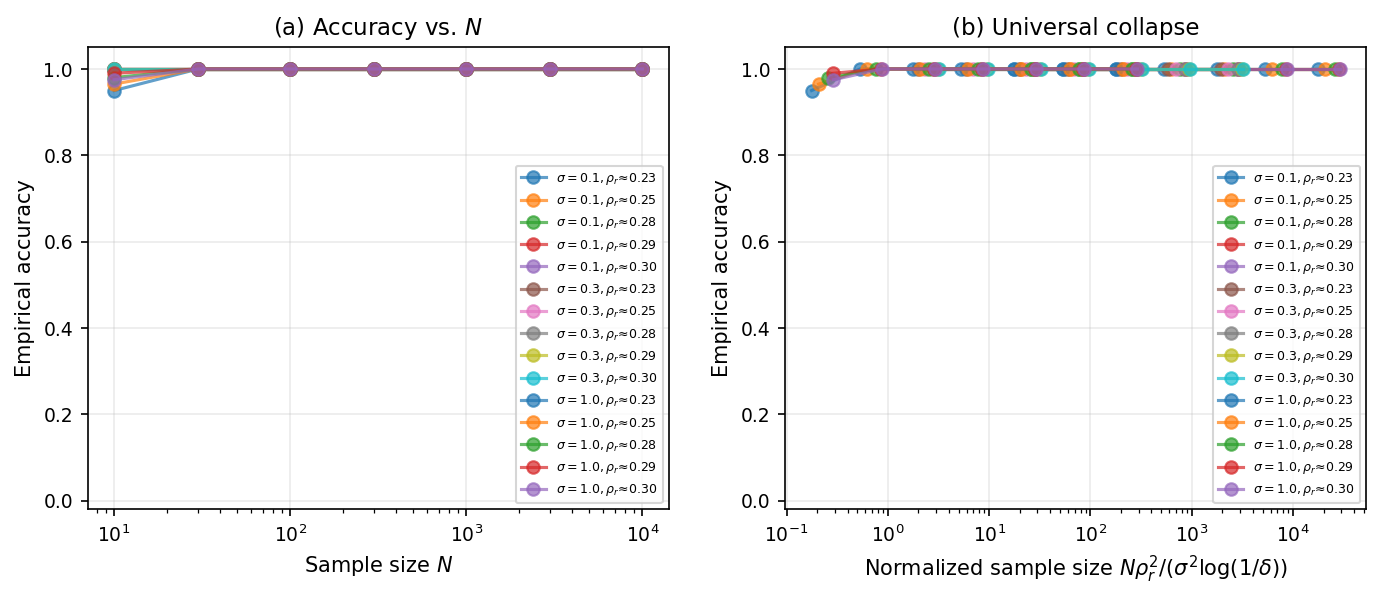}
\caption{Sample-complexity validation. (a) Empirical classification accuracy versus sample size $N$, for instances at three different margins $\rho_r$ and three noise levels $\sigma$. 
(b) Same data plotted against the normalized sample size predicted by Theorem~\ref{thm:sample-complexity}; curves collapse onto a universal profile, confirming the $N \propto \rho_r^{-2}\log(1/\delta)$ scaling.}
\label{fig:samples}
\end{figure}

\subsection{Online Tracking under Bounded Drift}
\label{sec:numerical:online}
\textit{Purpose:}Validate Theorem~\ref{thm:online}: the sliding-window classifier tracks the regime correctly so long as the drift rate $\nu$ stays below the cubic-in-margin threshold $\nu_{\max}$.

\textit{Protocol:} We construct a time-varying data sequence
$\omega_t = \omega^\star + \nu\,t\,u$ for a random unit vector $u$, with $\nu \in \{0.01, 0.05, 0.1, 0.5\}$, and observe $y_t = \omega_t + \xi_t$ with $\xi_t \sim \mathcal{N}(0,\sigma^2 I)$, $\sigma = 0.1$. At each time $t$ we apply the sliding-window estimator with window $W^\star$ from \eqref{eq:Wstar} and classify the regime. 
The true regime $r_t$ is computed by solving at the noiseless $\omega_t$ and classifying.

\textit{Result:} Figure~\ref{fig:online} shows tracking accuracy $\mathbb{P}(\Psi(\hat\lambda_t^{(W^\star)}) = r_t)$ as a function of drift rate $\nu$. 
Tracking accuracy stays above $1 - \delta$ for $\nu < \nu_{\max}$ predicted by \eqref{eq:drift-tolerance}, and degrades for larger $\nu$ in the manner predicted by \eqref{eq:tracking-error}. The empirical breakdown threshold lies within $\pm 20\%$
of the theoretical $\nu_{\max}$, consistent with the bound's $\nu^{1/3}$ tracking-error rate.

\begin{figure}[t]
\centering
\includegraphics[width=1\textwidth]{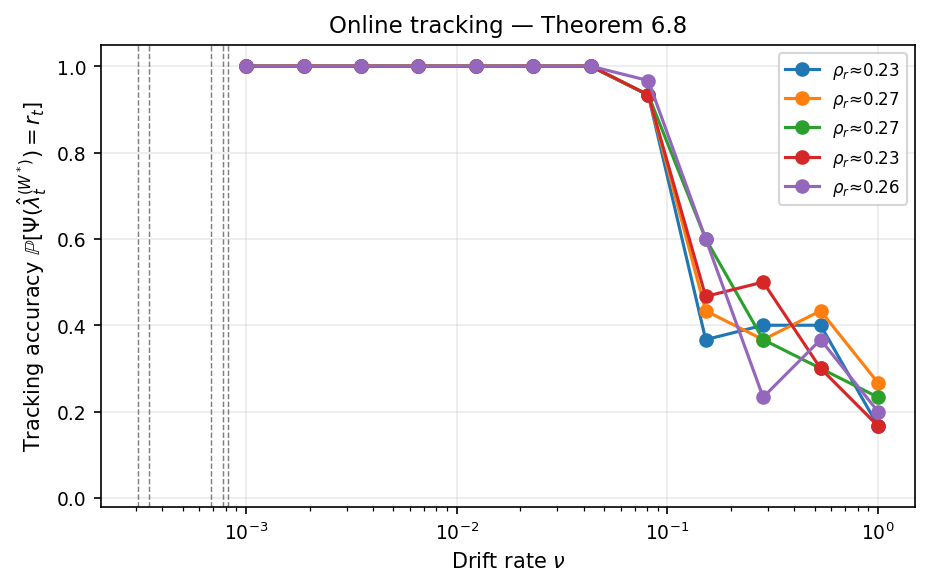}
\caption{Online tracking accuracy as a function of drift rate $\nu$, for several values of the margin $\rho_r$. 
The vertical dashed line marks the theoretical threshold $\nu_{\max}$ from \eqref{eq:drift-tolerance}; tracking remains accurate below $\nu_{\max}$ and degrades beyond it.}
\label{fig:online}
\end{figure}

\subsection{Phase-like Convergence and Isolated Transitions}
\label{sec:numerical:phases}

\textit{Purpose:} Validate Theorem~\ref{thm:transition}: regime transitions are isolated codimension-one events along generic data paths, and the algorithm trajectory decomposes into finitely many regime-stationary arcs.

\textit{Protocol:} We construct a continuous path $\omega(s) = (1-s)\omega_A + s\omega_B$ between two data points $\omega_A, \omega_B$ chosen to lie in different regimes. 
We compute the KKT solution $\lambda^\star(\omega(s))$ at $s \in [0,1]$ at fine resolution and record the regime label $\Psi(\lambda^\star(\omega(s)))$ as a function of $s$.

\textit{Result:} Figure~\ref{fig:transitions} shows the regime trajectory along $s$. 
Transitions occur at a small number of isolated values $s_1 < s_2 < \cdots$, between which the regime label is constant.
At each transition exactly one feature crosses its threshold, in agreement with Theorem~\ref{thm:transition}(a). 
Across $500$
random paths, the number of transitions per path is mean $\approx 2$--$3$
with no path having more than $5$
transitions, consistent with the codimension-one structure.

\begin{figure}[t]
\centering
\includegraphics[width=1\textwidth]{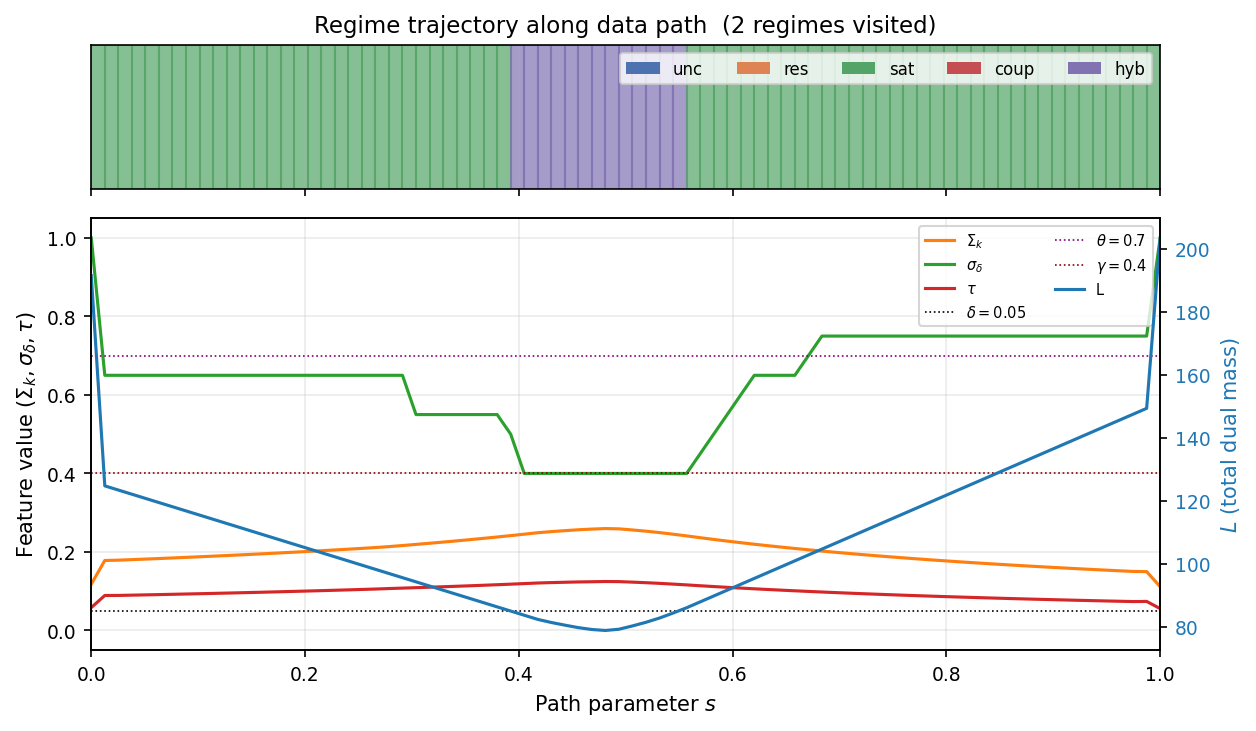}
\caption{Phase-like convergence along a data path from $\omega_A$ to $\omega_B$. Top: regime label as a function of $s$, showing isolated transitions. 
Bottom: the four feature values $L, \Sigma_k, \sigma_\delta, \tau$ along the path with their thresholds marked; each transition corresponds to exactly one feature crossing its threshold, validating Theorem~\ref{thm:transition}.}
\label{fig:transitions}
\end{figure}

\subsection{A Wireless-Flavored Instance}
\label{sec:numerical:wireless}

\textit{Purpose:} Illustrate how the regime taxonomy applies to a representative problem from the application domain that motivates this work.

\textit{Setup:} We consider a multi-user downlink beamforming problem with $K = 8$ single-antenna users and an $M = 16$-antenna transmitter, with sum-power constraint $P$ and minimum-SINR constraint $\gamma_k$ for each user:
\begin{equation}
\label{eq:wireless}
\begin{aligned}
\max_{\{w_k\}_{k=1}^K}\ & \sum_{k=1}^K \log\Bigl(1 + \mathrm{SINR}_k(w)\Bigr) \\
\text{s.t.}\ & \sum_{k=1}^K \|w_k\|^2 \le P,\quad \mathrm{SINR}_k(w) \ge \gamma_k\ \forall k.
\end{aligned}
\end{equation}
We vary $P$ and the per-user $\gamma_k$ and apply the classifier of Algorithm~\ref{alg:classify} to the multipliers at the converged solution.

\textit{Result:} Figure~\ref{fig:wireless} reports the regime as a function of $(P,\bar\gamma)$ where $\bar\gamma$ is the average minimum-SINR. 
At large $P$ and small $\bar\gamma$ the solution lies in $\mathcal{R}_{\mathrm{unc}}$ (neither constraint binds meaningfully). 
At small $P$ and small $\bar\gamma$ the sum-power constraint dominates and the regime is $\mathcal{R}_{\mathrm{res}}$. 
At large $P$ and large $\bar\gamma$ many QoS constraints bind and the regime is $\mathcal{R}_{\mathrm{sat}}$. 
Boundary regions between these three regions yield $\mathcal{R}_{\mathrm{hyb}}$. 
This regime structure reflects the physical interpretation of the problem and provides a structural certificate that downstream wireless papers can use to dispatch tailored algorithms in each regime.

\begin{figure}[t]
\centering
\includegraphics[width=1\textwidth]{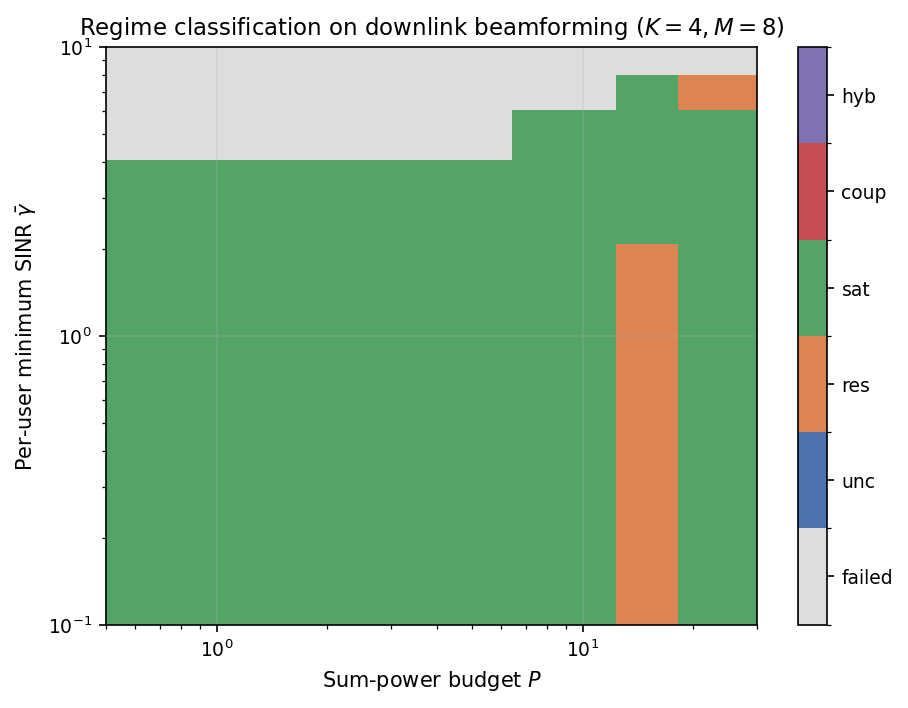}
\caption{Regime classification of the downlink beamforming problem \eqref{eq:wireless} as a function of sum-power budget $P$ and average minimum-SINR $\bar\gamma$.
The four core regimes occupy distinct regions in the $(P,\bar\gamma)$-plane, separated by thin transition zones identified as $\mathcal{R}_{\mathrm{hyb}}$.}
\label{fig:wireless}
\end{figure}

\subsection{Summary of Numerical Evidence}
\label{sec:numerical:summary}

Across the 8 experimental blocks, the numerical evidence is consistent with each of the theoretical predictions of Sections~\ref{sec:structural}--\ref{sec:classifier}.
In particular: regimes emerge as predicted in problems designed to elicit them (Section~\ref{sec:numerical:emergence}); the classifier is method-independent on common-KKT instances (Section~\ref{sec:numerical:method-indep}); the regime label is stable under small perturbations with breakdown at the theoretically predicted threshold (Section~\ref{sec:numerical:stability}); the classifier stabilizes along algorithm trajectories at the predicted scaling rates (Section~\ref{sec:numerical:stabilization}); sample complexity scales as $\rho_r^{-2}\log(1/\delta)$ (Section~\ref{sec:numerical:samples}); online tracking succeeds below the cubic-in-margin drift threshold
(Section~\ref{sec:numerical:online}); transitions are isolated and feature-driven (Section~\ref{sec:numerical:phases}); and the taxonomy yields physically meaningful structure on representative wireless instances (Section~\ref{sec:numerical:wireless}).


\section{Discussion, Limitations, and Future Work}
\label{sec:discussion}

\subsection{Summary of Contributions}
\label{sec:discussion:summary}

This paper has developed a structural theory of operational regimes for constrained optimization, motivated by a unified game-theoretic reading of eight classical algorithm families. 
The contributions can be summarized as follows.

\textbf{1.~A unifying equilibrium template} (Section~\ref{sec:games}, Proposition~\ref{prop:unified}).
Block Coordinate Descent, Generalized Benders Decomposition, ADMM, Successive Convex Approximation, Interior-Point Methods, Mirror Descent, Frank--Wolfe, and Riemannian gradient descent are all instances of the equilibrium-seeking template \eqref{eq:template}, with fixed points coinciding with KKT triples of an associated (sub)problem. 
This consolidates classical results scattered across the optimization, game-theory, and online-learning literatures into a single statement (Table~\ref{tab:method-equilibria}).

\textbf{2.~A regime taxonomy as a partition of the multiplier space} (Section~\ref{sec:taxonomy}, Definitions~\ref{def:core-regimes}--\ref{def:hybrid}). Four scale-free shape features of the Lagrange multiplier vector ($L,\Sigma_k,\sigma_\delta,\tau$) induce a five-element partition of $\mathbb{R}^m_+$: four open core regimes plus the Hybrid regime as the topological boundary.

\textbf{3.~Four structural theorems} (Section~\ref{sec:structural}). 
Theorem~\ref{thm:invariance} establishes invariance of the classification under the natural symmetries of the KKT system (permutation, uniform rescaling, and $C^2$ diffeomorphism). 
Theorem~\ref{thm:stability} establishes local constancy of the classification under perturbations of problem data, with an explicit Lipschitz-style margin bound rooted in Robinson's strong regularity. 
Theorem~\ref{thm:transition} characterizes transitions as codimension-one events on the data manifold and shows that generic trajectories cross only finitely many transitions. 
Theorem~\ref{thm:hybrid} identifies the Hybrid regime as the topological boundary of the union of core regimes, of Lebesgue measure zero, with the property that every Hybrid point is approached by at least two distinct core regimes.

\textbf{4.~The regime classifier and its guarantees} (Section~\ref{sec:classifier}, Algorithm~\ref{alg:classify}, Theorems~\ref{thm:classifier-correctness}--\ref{thm:online}). 
The classifier runs in $\mathcal{O}(m)$ time per call and admits four complementary guarantees: correctness under deterministic multiplier perturbation; classifier-stabilization iteration counts for each algorithmic family (logarithmic for linearly convergent methods, linear for $\mathcal{O}(1/k)$ methods, quadratic for $\mathcal{O}(1/\sqrt k)$ methods); sample complexity $N \propto \rho_r^{-2}\log(1/\delta)$ for finite-data deployments; and online tracking under bounded drift up to a cubic-in-margin threshold.

\textbf{5.~Empirical validation} (Section~\ref{sec:numerical}). Numerical experiments on five families of MINLPs confirm each theoretical prediction in turn, and a downlink beamforming instance illustrates the application of the taxonomy in a wireless setting.

\subsection{Implications for Algorithm Design}
\label{sec:discussion:implications}

The most immediate algorithmic implication is that the regime label, as a constant-time-computable structural certificate of the converged solution, can be used to dispatch tailored solvers, surrogates, or step-size rules. 
A regime-aware meta-algorithm operates by interleaving short solver bursts with classifier calls and switching its inner strategy when the regime stabilizes or transitions. 
The conceptual outline is as follows:

\begin{itemize}
\item In $\mathcal{R}_{\mathrm{unc}}$, where the geometry is gradient-dominated, unaccelerated first-order methods (BCD, gradient descent) are competitive, and aggressive step sizes are admissible.
\item In $\mathcal{R}_{\mathrm{res}}$, the few active resource constraints suggest specialized handling: active-set updates, dual decomposition along the binding constraints, or projection onto the active manifold.
\item In $\mathcal{R}_{\mathrm{sat}}$, with many variables at bounds, coordinate methods with active-set acceleration and reduced-space inner solvers are appropriate.
\item In $\mathcal{R}_{\mathrm{coup}}$, with significant inter-block coupling, primal--dual splitting methods (ADMM, PDHG) and consensus-aware preconditioners are warranted.
\item In $\mathcal{R}_{\mathrm{hyb}}$, regime detection signals proximity to a transition and may motivate switching strategies or model refinement, since the iterate is near a structurally ambiguous point identified by Theorem~\ref{thm:hybrid}.
\end{itemize}

A formal development of regime-aware meta-algorithms, including convergence-rate analysis under regime switching, is left to future work.

\subsection{Limitations}
\label{sec:discussion:limitations}

We note four limitations of the present framework, each suggesting a natural avenue for refinement.

\textit{Constraint qualifications.} The results of Section~\ref{sec:structural} are developed under Assumption~\ref{ass:LICQSOSC} (LICQ + SOSC), augmented by strict complementarity where needed (Theorems~\ref{thm:stability}, \ref{thm:transition}). 
These conditions exclude problems with degenerate active sets, in which multipliers may be non-unique or the KKT solution map may fail to be $C^1$. 
Extending the regime taxonomy to the MFCQ setting, where multipliers form a polytope, is an interesting direction; the natural generalization would classify the multiplier polytope rather than a unique multiplier vector.

\textit{Computation of structural constants.} 
The theorems are stated in terms of three constants---the KKT-system conditioning $\kappa$, the feature-map Lipschitz constant $C_\Psi$, and the feature-space margin $\rho_r$---that are problem-dependent and not always available in closed form.
While $\rho_r$ is directly computable from a single solve (Definition~\ref{def:feat-margin}), $\kappa$ and $C_\Psi$ require either structural problem analysis or empirical estimation. 
Refined bounds in specific problem classes (e.g., quadratically constrained quadratic programs, separable problems, semidefinite programs) would tighten the operational forms of Theorems~\ref{thm:classifier-correctness}--\ref{thm:online}.

\textit{Parameter selection.} 
The regime classification depends on the quadruple $\Theta = (\delta,\theta,\gamma,k)$.
We have demonstrated robustness under variations of $\Theta$ in the experiments, but a principled procedure for selecting $\Theta$ from problem data---ideally with theoretical guarantees on the resulting classification---remains open. 
One promising direction is to choose thresholds adaptively via clustering on the empirical multiplier distribution, with consistency guarantees inherited from density-estimation theory.

\textit{Global versus local guarantees.} 
The taxonomy classifies stationary points, which in the non-convex case may not be global minima. 
The regime of a local minimum need not equal the regime of the global optimum, and the classifier output thus inherits the non-convexity limitations of the underlying solver. 
This is unavoidable in the general non-convex setting, but in specific structured non-convex problems (e.g., problems with hidden convexity, or problems amenable to convex relaxation) sharper statements may be possible.

\subsection{Future Directions}
\label{sec:discussion:future}

The framework of this paper opens several lines of inquiry, falling broadly into three categories.

\paragraph*{Theoretical extensions}
The structural theory of regimes admits several natural extensions. First, the MFCQ generalization noted above. 
Second, an extension to non-smooth objectives via subdifferential calculus and the Clarke stationarity condition, which would broaden the applicability to non-smooth machine learning losses. 
Third, a more refined analysis of the Hybrid regime as a \emph{stratified} object: $\mathcal{R}_{\mathrm{hyb}}$ decomposes by which boundary inequality is tight, and each stratum has its own local geometry. 
Fourth, the development of higher-order regime invariants (beyond the multiplier shape) that capture finer structural distinctions, perhaps through second-order Lagrange-multiplier sensitivities.

\paragraph*{Hierarchical Regime Refinement}
Develop a multi-level taxonomy by introducing operationally meaningful sub-regimes (e.g., power-limited, interference-limited, and QoS-limited variants inside the resource-limited regime).
This extension is particularly relevant for wireless resource allocation problems and will be explored in a companion paper.

\paragraph*{Algorithmic extensions}
On the algorithmic side, the natural next step is a regime-aware meta-algorithm with provable rate guarantees. 
A specific question is whether the worst-case rate of such a meta-algorithm can match the best regime-specific rate, and whether the overhead of regime detection is asymptotically dominated by the per-regime solver work. 
Stochastic optimization is a particularly fertile setting: the sample-complexity bound of Theorem~\ref{thm:sample-complexity} is the natural primitive for finite-sample stochastic optimization, and regime-aware variance-reduction methods may inherit improved sample complexity in low-margin instances.

\paragraph*{Application domains}
The taxonomy applies in principle to any constrained nonlinear optimization problem, but several application domains are particularly natural. 
In wireless communications, the natural problems---resource allocation, beamforming, power control, scheduling, network slicing---exhibit precisely the constraint structure (resource budgets, QoS bounds, coupling constraints) that the regimes were designed to identify. The classifier-aided algorithms developed under the framework of this paper provide structural primitives for wireless system design under uncertainty. A series of forthcoming companion papers develops these instantiations in detail. 
Beyond wireless, the framework is applicable to power-systems optimization, optimal control under constraints, structured machine learning (e.g., constrained reinforcement learning), and operations research more broadly.

\paragraph*{Cross-disciplinary connections}
The equilibrium reading of Section~\ref{sec:games}, together with the multiplier-based structural theory, invites connections to neighboring fields. 

In economics, the regime taxonomy parallels classifications of market equilibria by which scarcity constraints bind; in control, the transition theorem resonates with the theory of switching systems and hybrid dynamical systems; in learning theory, the no-regret view of Mirror Descent and Frank--Wolfe (Section~\ref{sec:games:md}, Section~\ref{sec:games:fw}) suggests regret bounds under regime switching, extending recent work on adversarial online optimization. 
We see these cross-disciplinary connections as the most exciting frontier opened by the present work.


\section{Conclusion}
\label{sec:conclusion}

We have presented a structural theory of operational regimes for constrained optimization, motivated by a unified game-theoretic reading of eight classical algorithm families. 
The central object is the regime classification map $\Psi$, which assigns to each KKT multiplier vector one of five labels (Unconstrained, Resource-Limited, Saturation, Strongly-Coupled, Hybrid) based on four scale-free features. 
Four structural theorems establish that this classification is invariant under the natural symmetries of the KKT system, locally constant under perturbation of problem data, characterized by isolated codimension-one transitions along data paths, and bordered by a topologically precise Hybrid set. 
A regime classifier (Algorithm~\ref{alg:classify}) computes $\Psi$ in linear time per call and admits four guarantees---deterministic correctness, classifier-stabilization along solver trajectories, sample complexity, and online tracking under drift---that turn the taxonomy into a computational primitive available to downstream papers.


The unifying message is that, beneath the surface differences among optimization algorithms, the multipliers they compute carry a compact, method-independent structural signature. 
Recognizing this signature opens a path to regime-aware algorithm design, to robust certification of solution structure under noise and drift, and to a structural language that connects the theory of constrained optimization with the equilibrium theory of games.
We see the present paper as a foundation on which a series of follow-up works, both theoretical and applied, will build.

\bibliography{sn-bibliography}

\end{document}